\numberwithin{equation}{section}
\newcommand{\HHb}{\boldsymbol{\dot{\mathcal{H}}}_{rad}}
\newcommand{\HH}{\dot{\mathcal{H}}_{rad}}
\newcommand{\HHR}{\boldsymbol{\dot{\mathcal{H}}}_{R}}
\newcommand{\Hb}{\boldsymbol{\dot{H}}_{rad}}
\newcommand{\HR}{\boldsymbol{\dot{H}}_{R}}
\newcommand{\RR}{\mathbb{R}}
\newcommand{\NN}{\mathbb{N}}
\newcommand{\Sb}{\mathbb{S}}
\newcommand{\Lb}{\mathbf{L}}
\newcommand{\indic}{1\!\!1}
\newcommand{\JJJ}{\mathcal{J}}
\newcommand{\NNN}{\mathcal{N}}
\newcommand{\ZZZ}{\mathcal{Z}}
\newcommand{\eps}{\varepsilon}
\newcommand{\tlambda}{\tilde{\lambda}}
\newcommand{\tE}{\widetilde{E}}
\newcommand{\tQ}{\widetilde{Q}}
\newcommand{\tC}{\widetilde{C}}
\newtheorem{prop}{Proposition}[section]
\newtheorem{corol}[prop]{Corollary}
\newtheorem{thm}[prop]{Theorem}
\newtheorem{lem}[prop]{Lemma}
\newtheorem{claim}[prop]{Claim}
\theoremstyle{definition}
\newtheorem{rem}[prop]{Remark}
\newtheorem{definition}[prop]{Definition}
\newtheorem{exemple}[prop]{Example}
\newtheorem{extthm}{Theorem}
\theoremstyle{remark}
\newtheorem{step}{Step}
\DeclareMathOperator{\wlim}{w-lim}
\newcommand{\lf}{\left}
\newcommand{\rg}{\right}
\begin{document}

\begin{abstract}
 This work concerns a general system of energy-critical wave equations in the Minkowski space of dimension $1+3$. The wave equations are coupled by the nonlinearities, which are homogeneous of degree 5. 
 
 We prove that any radial solution of the system 
can be written asymptotically as a sum of rescaled stationary solutions plus a radiation term, along any sequence of times for which the solution is bounded in the energy space. With an additional structural assumption on the nonlinearity, we prove a continuous in time resolution result for radial solutions.

The proof of the sequential resolution uses the channel of energy method, as in the scalar case treated in \cite{DuKeMe13}. The proof of the continous in time resolution is based on new compactness and localization arguments. 
\end{abstract}

\title[Radial solutions of wave systems] {Classification of radial solutions of energy-critical wave systems}
\author{Thomas Duyckaerts${}^1$}
\address{LAGA (UMR 7539), Universit\'e Sorbonne Paris Nord, and \'Ecole Normale Sup\'erieure}
\email{duyckaer@math.univ-paris13.fr}
\author{Tristan Roy${}^2$}
\address{American University of Beirut, Department of Mathematics}
\email{tr14@aub.edu.lb}

\thanks{$^1$LAGA (UMR 7539), Universit\'e Sorbonne Paris Nord, and \'Ecole Normale Sup\'erieure. Partially supported by the program C\`edre, project number 4456XJ}

\thanks{$^2$American University of Beirut, Department of Mathematics. Partially supported by the program C\`edre, project number 4456XJ}

\keywords{Nonlinear wave equation. Systems of wave equations. Global dynamics. Soliton resolution.}

\subjclass{35L71,
35B40,
35L15, 35L52}

\maketitle
\tableofcontents

\section{Introduction and main results}
In this paper, we are interested with the following system of wave equations on $\RR^3$:
\begin{equation}
 \label{eq:NLW}
 \left\{ \begin{aligned}
\partial_{tt}u-\triangle u &= f(u)\\
\vec{u}_{\restriction t=t_0} & =(u_0,u_1) \in \HHb ,
         \end{aligned}
\right.
\end{equation}
where the unknown function $u$ takes values in $ \RR^m $, $ m \geq 1 $,
$$ \HHb =\Hb \times \Lb^2_{rad}, \quad \Hb = \dot{H}^1_{rad}(\RR^3,\RR^m), \quad  \Lb^2_{rad} = L^2_{rad}(\RR^3,\RR^m),$$
by definition
$V_{rad}$ is the subspace of elements of $V$ depending on $|x|=r$, $f$ is a nonlinearity such that
\begin{equation}
 \label{H1}
 \tag{A0}
f\in  \mathcal{C}^2( \mathbb{R}^{m},\mathbb{R}^{m}),
\quad\text{homogeneous of degree }5,
\end{equation}
that is $f(\lambda x ) = \lambda^{5} f(x)$ for all $ (\lambda, x) \in \mathbb{R}^{+} \times \mathbb{R}^{m} $. In several part of the article, we will also assume that $f$ is of potential-type, i.e
\begin{equation}
\label{H0}
\tag{A1}
f(u)=\nabla_u F(u),\quad F\in \mathcal{C}^3(\RR^m,\RR),\quad \text{homogeneous of degree }6.
\end{equation}
Note that \eqref{H0} implies \eqref{H1}.
By the same proof as in the scalar case (see e.g. the references in the introductions of \cite{Tao06DPDE}, \cite{KeMe08} and subsections \ref{sub:Cauchy1} and \ref{sub:Cauchy2} below), if $f$ is as in \eqref{H1}, the equation \eqref{eq:NLW} is locally well-posed in $\HHb$. The equation is invariant by scaling: if $u(t,x)$ is a solution, so is $\lambda^{1/2} u(\lambda t,\lambda x)$. Furthermore, assuming \eqref{H0}, the solution $u$ of (\ref{eq:NLW}) satisfies the conservation law $E(\vec{u}(t)) = E(\vec{u}(0))$ for all time $t$  in its domain of existence $(T_-(u),T_+(u))$, with
\begin{equation}
\nonumber
E(\vec{u}(t)) := \frac{1}{2} \int_{\mathbb{R}^{3}} |\nabla u(t)|^{2} \,dx +\frac 12\int_{\RR^3}|\partial_tu(t)|^2dx- \int_{\mathbb{R}^{3}} F(u(t)) \,dx \cdot
\end{equation}
Hence we denote by $E(u):= E(\vec{u}(t))$ the energy of a solution $u$. Let
\begin{equation}
\label{DefZ}
\mathcal{Z}:=  \left\{ Z \in \Hb  \setminus \{ 0 \}: - \triangle Z  = f(Z) \right\}.
\end{equation}
If $Z \in \mathcal{Z}$ then $E(Z)$ denotes the energy of $Z$, i.e
\begin{equation}
\nonumber
E(Z) := \frac{1}{2} \int_{\mathbb{R}^{3}} |\nabla Z|^{2} \,dx - \int_{\mathbb{R}^{3}} F(Z) \,dx \cdot
\end{equation}

Our aim is to describe the asymptotic dynamics of the solution of \eqref{eq:NLW} as $t\to T_+(u)$.
We first recall this dynamics in the scalar case $m=1$, where it is well understood.

\subsection{Scalar case}

Let $v_L(t)=S_L(t)(u_0,u_1)$ be defined by
\begin{equation}
\label{defSL}
S_L(t)(u_0,u_1)=\cos\left(t\sqrt{-\triangle}\right)u_0+\frac{\sin\left(t\sqrt{-\triangle}\right)}{\sqrt{-\triangle}}u_1.
\end{equation}
It is the solution of the linear wave equation on $\RR^3$:
\begin{equation}
 \label{eq:LW}
 \partial_{tt}v_L-\triangle v_L=0,
\end{equation}
with initial data $(u_0,u_1)\in \HHb$ at $t=0$. We will denote $\vec{S}_L(t)=(v_L(t),\partial_tv_L(t))$.

In the case $m=1$, two typical nonlinearities are given by $f(u)=-u^5$ and $f(u)=u^5$.

For the defocusing nonlinearity $f(u)=-u^5$, the set $\mathcal{Z}$ is reduced to the constant $0$ solution, and every solution $u$ of \eqref{eq:NLW} is global and in $L^5(\RR,L^{10})$ (see e.g. \cite{GiSoVe92}), which implies easily that it scatters to a linear solution: there exists $(v_0,v_1)\in \HH$ such that
$$\lim_{t\to\infty}\|\vec{u}(t)-\vec{S}_L(t)(v_0,v_1)\|_{\HH}=0,$$
where the notation $\vec{u}$ denotes the pair $(u,\partial_tu)$.

In the focusing case $f(u)=u^5$, the equation has an explicit stationary solution
\begin{equation}
\label{def_W}
 W(r)=\left( 1+|x|^2/3 \right)^{-1/2}.
\end{equation}
Denoting by $W_{(\lambda)}(x)=\lambda^{-1/2}W(\lambda^{-1}x)$, one can check, using e.g. ODE arguments, that $\mathcal{Z}=\{\pm W_{(\lambda)},\lambda>0\}$.

In \cite{DuKeMe13}, the following ``resolution into stationary solutions'' was proved:
\begin{extthm}
\label{T:DKM}
Assume $m=1$ and $f(u)=u^5$.
Let $u$ be a solution of \eqref{eq:NLW} such that $T_+(u)=+\infty$ or
\begin{equation}
\label{liminf_scal}
T_+(u)<\infty\text{ and }\liminf_{t\to T_+(u)}\|\vec{u}\|_{\HH}<\infty.
\end{equation}
Then there exist $J\geq 0$, $(v_0,v_1)\in \HH$, $(\lambda_{1}(t),\ldots,\lambda_{J}(t))\in \mathcal{C}^0([t_0,T_+(u)),(0,\infty)^{J})$, $(\iota_j)_{j}\in \{\pm 1\}^{J}$ 
such that
$$ \lambda_1(t)\ll \ldots \ll \lambda_J(t),\quad t\to T_+(u),$$
and
\begin{itemize}
 \item if $T_+(u)<\infty$, then $J\geq 1$, $\lambda_{J}(t)\ll T_+-t$ as $t\to T_+(u)$ and
$$ \vec{u}(t)=(v_0,v_1)+\sum_{j=1}^{J} \left(\iota_j W_{(\lambda_j(t))},0\right)+o(1)\text{ in } \HH\text{ as }t\to T_+(u);$$
\item if $T_+(u)=+\infty$, then $\lambda_J(t)\ll t$ as $t\to+\infty$ (if $J\geq 1$), and
$$ \vec{u}(t)=\vec{S}_L(t)(v_0,v_1)+\sum_{j=1}^{J} \left(\iota_j W_{(\lambda_j(t))},0\right)+o(1)\text{ in } \HH\text{ as }t\to +\infty.$$
\end{itemize}
\end{extthm}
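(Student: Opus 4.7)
My plan is to combine the linear profile decomposition of Bahouri--G\'erard in the energy space with the ``channel of energy'' method specific to 3D radial wave equations. The cornerstone is a linear exterior energy lower bound: there exists $c>0$ such that every $(v_0,v_1)\in\HH$ supported in $\{|x|\ge R\}$ satisfies
\begin{equation*}
\max_{\pm}\liminf_{t\to \pm\infty}\int_{|x|\ge R+|t|}\bigl(|\nabla S_L(t)(v_0,v_1)|^2+|\partial_t S_L(t)(v_0,v_1)|^2\bigr)dx\ge c\,\|(v_0,v_1)\|_{\HH}^2,
\end{equation*}
which one derives from the explicit formula representing radial 3D free waves via the 1D wave equation on $ru(t,r)$, by pairing with carefully chosen test quantities in the exterior of a light cone. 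The estimate says that free waves cannot remain trapped in a light cone with arbitrarily small exterior energy.

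\textbf{Sequential resolution, blow-up case.} Fix a sequence $t_n\to T_+(u)$ along which $\|\vec u(t_n)\|_\HH$ stays bounded. Apply the linear profile decomposition to $\vec u(t_n)$, producing linear profiles $\vec U^j_L$ with pairwise orthogonal scale/time-shift parameters $(\lambda_{j,n},t_{j,n})$ and a dispersive remainder $\vec w^J_n$. Propagate each profile by \eqref{eq:NLW} to obtain nonlinear profiles $U^j$; a standard long-time perturbation argument, together with the scattering of small-data solutions, shows that $\vec u(t)$ is well-approximated on its maximal interval by the superposition of the $U^j$ plus the free evolution of $\vec w^J_n$. The core task is to identify each non-trivial nonlinear profile with $\pm W_{(\mu)}$ at some scale. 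Assume by contradiction that some $U^j$ is neither zero nor a rescaled $\pm W$: after reducing to a minimal non-radiating solution and subtracting the bubbles that do appear, the perturbative channel-of-energy estimate applied to $v(t):=u(t)-\sum_{j}\iota_j W_{(\lambda_j(t))}$ forces $v$ to radiate a positive amount of energy to infinity as $t\to T_+(u)$, contradicting the uniform bound on $\|\vec u(t_n)\|_\HH$. Hence each nonlinear profile must be a bubble, yielding the sequential decomposition.

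\textbf{Global case and upgrade to continuous time.} If $T_+(u)=+\infty$ and $u$ does not scatter, one first extracts a linear radiation term by a concentration-compactness argument: there exists $(v_0,v_1)\in\HH$ such that $\vec u(t)-\vec S_L(t)(v_0,v_1)$ is asymptotically compact modulo rescalings and concentrated in $\{|x|\le(1-\eta)t\}$ for every $\eta>0$. Applying the blow-up analysis to this non-radiating remainder (with the role of the blow-up time played by an appropriate rescaling) yields the sequential soliton decomposition. To upgrade from sequential to continuous-in-time resolution, one invokes uniqueness of the multi-bubble decomposition for strictly ordered scales combined with a modulation/implicit function argument to produce continuous parameters $\lambda_j(t)$; the strict ordering $\lambda_1(t)\ll\cdots\ll\lambda_J(t)$ itself is obtained from channel estimates localized in annular regions separating consecutive bubble scales.

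\textbf{Main obstacle.} The hardest step is the rigidity: proving that a non-scattering, non-stationary radial solution of \eqref{eq:NLW} must emit a quantitatively positive exterior energy outside all sufficiently large light cones. Transferring the linear channel estimate to the nonlinear flow requires iterating it while carefully controlling the interaction between bubbles at well-separated scales and the dispersive radiation. A further subtlety is handling the finite-dimensional subspaces on which the linear exterior estimate degenerates (coming from explicit harmonic profiles), and verifying that these obstructions do not prevent the contradiction when combined with the explicit spectral structure of the linearized operator around $W$.
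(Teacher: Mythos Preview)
Your overall architecture---exterior channel estimate, profile decomposition, rigidity for non-radiative solutions, sequential resolution, then upgrade to continuous time---is exactly the route of \cite{DuKeMe13} and of this paper. Two points in your description diverge from what is actually done and are worth correcting.

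First, the passage from sequential to continuous resolution does \emph{not} go through a modulation or implicit-function argument. The mechanism is much more elementary: in the scalar case every element of $\ZZZ$ has energy $E(W)$, so the number $J$ of bubbles is fixed by the energy identity $E(u)=J\,E(W)+\tfrac12\|(v_0,v_1)\|_{\HHb}^2$ and is the same along every sequence. One then \emph{defines} each $\lambda_j(t)$ as the radius at which a prescribed fraction of $\int|\nabla(u-v_L)(t)|^2$ has accumulated (cf.\ \eqref{SR44Sev}) and checks continuity directly. Section~\ref{S:continuous_resolution} carries this out for systems, where the extra work (Proposition~\ref{Prop:SameOrderNrj}) is precisely to show that $J$ and the list $(E(Q_1),\ldots,E(Q_J))$ are sequence-independent; this is where assumptions \eqref{Eqn:AssFinite}--\eqref{Eqn:AssEj} enter.

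Second, the rigidity proof in 3D radial uses no spectral information about $-\Delta-5W^4$. The ``degenerate subspace'' for the exterior estimate \eqref{lower_bound} at level $R$ is simply $\{(\theta/r,0)\}$, since $\partial_r(r\cdot\theta/r)=0$. The proof of Theorem~\ref{T:rigidity} shows by an iteration on $r\mapsto r u_0(r)$ (Steps~\ref{step:limit}--\ref{step:W}) that a non-radiative solution must satisfy $ru_0(r)\to\theta$ and then coincide with $Z_\theta$; this is an ODE-flavoured argument, not a spectral one. Relatedly, in the sequential step one does not subtract bubbles from $u$ and study the remainder: one applies Corollary~\ref{Cor:rigidity} directly to each nonlinear profile $U^j$, and transfers the resulting channel via the approximation Theorem~\ref{T:approx} and Lemma~\ref{L:channel}.
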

Solutions that satisfy \eqref{liminf_scal} are called \emph{Type II Blow-up} solutions. Examples of solutions satisfying the conclusion of Theorem \ref{T:DKM} are known for $J=0$, $T_+(u)=+\infty$ (scattering solutions) and for $J=1$, $T_+=\infty$ (see \cite{KrSc07} and \cite{DoKr13}) and $J=1$, $T_+<\infty$ (see \cite{KrScTa09}).

To complete the scalar case, let us mention that Theorem \ref{T:continuous_resolution} below implies the analog of Theorem \ref{T:DKM} for a
 general homogeneous nonlinearity ($f(u)=c_+u^5$ for $u \geq 0$, $f(u)=c_-u^5$ for  $u \leq 0$, where $c_+,c_-$ are real constants).
\subsection{Convergence for a sequence of times}
The goal of this article is to investigate generalizations of Theorem \ref{T:DKM} in the vector-valued case. We will prove an analog of Theorem \ref{T:DKM} when the nonlinearity is of the form \eqref{H0}, with an additional assumption on the energies of the stationary solutions. We first state two weaker statements that are valid for nonlinearities satisfying the weaker assumption \eqref{H1}.
\begin{prop}
\label{P:sequence_resolution}
Assume \eqref{H1}.
Let $u$ be a solution of \eqref{eq:NLW} such that
\begin{equation}
\lim \inf_{t \rightarrow T_+(u)} \left\| \vec{u}(t) \right\|_{\HHb} < \infty .
\label{liminf}
\end{equation}
Then there exists $(v_0,v_1)\in \HHb$ such that
\begin{itemize}
 \item If $T_+(u)=+\infty$, then $$\forall A\in \RR,\quad \lim_{t\to\infty}\int_{|x|>|t|+A}\left(|\partial_t(u-v_L)(t)|^2+|\nabla (u-v_L)(t)|^2\right)dx=0,$$
 where $v_L(t)=S_L(t)(v_0,v_1)$.
 \item If $T_+(u)<\infty$,
 $$\lim_{t\to T_+(u)} \int_{|x|>T_+(u)-t} \left(|\partial_tu(t,x)-v_1(x)|^2+|\nabla (u(t,x)-v_0)(x)|^2\right)dx.$$
\end{itemize}
\end{prop}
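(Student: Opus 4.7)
The plan is to adapt the scalar argument of \cite{DuKeMe13}, with all estimates carried out componentwise and using radial symmetry. I treat the global case $T_+(u)=+\infty$ in detail; the blow-up case is analogous, with the backward light cone from $(T_+(u),0)$ in place of the forward cone and a weak subsequential limit of $\vec{u}(t_n)$ itself (rather than of $\vec{S}_L(-t_n)\vec{u}(t_n)$) providing $(v_0,v_1)$. By \eqref{liminf} there is a sequence $t_n\to\infty$ with $\|\vec{u}(t_n)\|_{\HHb}\leq M<\infty$. Since $\vec{S}_L$ is unitary on $\HHb$, the sequence $\vec{S}_L(-t_n)\vec{u}(t_n)$ is bounded and, along a subsequence, converges weakly in $\HHb$ to some $(v_0,v_1)$. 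Setting $v_L(t)=S_L(t)(v_0,v_1)$, the target is
\[
\lim_{t\to\infty}\int_{|x|>t+A}\bigl(|\nabla(u-v_L)(t)|^2+|\partial_t(u-v_L)(t)|^2\bigr)\,dx=0
\]
for every $A\in\RR$, continuously in $t$.

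The main analytic input is a small-data Cauchy theory in exterior cones: there exists $\eta_0>0$ such that whenever $(\varphi_0,\varphi_1)\in\HHb$ satisfies $\int_{|x|>R}(|\nabla\varphi_0|^2+|\varphi_1|^2)\,dx<\eta_0^2$, the corresponding nonlinear solution is defined on the exterior region $\{(t,x):|x|>|t|+R\}$, has finite $L^5_tL^{10}_x$ Strichartz norm there, and differs from its linear evolution in the energy norm on this region by $O(\eta_0^5)$, uniformly in $t$. This is a standard contraction-mapping argument using the degree-5 homogeneity of $f$ in \eqref{H1} and the radial Sobolev bound $|u(r)|\lesssim r^{-1/2}\|\nabla u\|_{L^2}$ to tame the nonlinearity away from the origin.

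Given this ingredient, fix $A\in\RR$ and $\varepsilon>0$. Choose $R$ so large that, splitting $\vec{u}(0)$ into interior and exterior parts $(u_0^<,u_1^<)+(u_0^>,u_1^>)$ with $(u_0^>,u_1^>)$ supported in $\{|x|>R\}$ and of $\HHb$-norm less than $\eta_0$, finite speed of propagation identifies $\vec{u}(t)$ on $\{|x|>t+A\}$ (for $t$ with $t+A>R$) with the evolution of $(u_0^>,u_1^>)$. The small-data theory then puts the latter at distance $O(\eta_0^5)$ from $\vec{S}_L(t)(u_0^>,u_1^>)$ on that region. Applying the same decomposition to the data at each $t_n$ and using the weak convergence $\vec{S}_L(-t_n)\vec{u}(t_n)\rightharpoonup(v_0,v_1)$ to identify the exterior trace of $\vec{S}_L(t)(v_0,v_1)$ with that of $\vec{S}_L(t)(u_0^>,u_1^>)$, again up to $O(\eta_0^5)$, yields the claim by letting $\varepsilon\to 0$.

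The principal obstacle is upgrading from sequential convergence (along $\{t_n\}$) to the stated continuous convergence in $t$, and ensuring that $(v_0,v_1)$ is uniquely determined (independent of the extracted subsequence). Both issues are resolved by the uniformity in $t$ of the exterior small-data theory: the exterior energy of $\vec{u}-\vec{v}_L$ on $\{|x|>t+A\}$ is small once $R$ is taken large, uniformly over all sufficiently large $t$, and the exterior trace of $\vec{v}_L$ is rigidly pinned down by that of $\vec{u}$ through this identification.
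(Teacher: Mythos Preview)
There is a genuine gap in the finite-speed-of-propagation step. You write that $\vec{u}(t)$ on $\{|x|>t+A\}$ (for $t$ with $t+A>R$) is identified with the evolution of the exterior data $(u_0^>,u_1^>)$ supported in $\{|x|>R\}$. This is false when $A<R$: the domain of dependence of $\{|x|>t+A\}$ at $t=0$ is $\{|x|>A\}$, not $\{|x|>R\}$, regardless of how large $t$ is. The data on the annulus $\{A<|x|<R\}$ need not be small, so the small-data exterior theory cannot be invoked on $\{|x|>t+A\}$. Your argument therefore only controls $u-v_L$ on $\{|x|>t+R\}$ with $R$ large, which is the easy part; the proposition requires all $A\in\RR$, in particular negative $A$, where the solitons live.

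What is missing is precisely the content of the paper's Step~1: the statement that $u\in \Lb^{5}\Lb^{10}\big(\{t>0,\;|x|>t-A\}\big)$ for \emph{every} $A$. This is not a consequence of small-data theory. The paper obtains it by running the profile decomposition of $\vec{u}(t_n)$ through the exterior-cone approximation theorem (Theorem~\ref{T:approx}) with $R_n=t_n-A$, and checking that every nonlinear profile $U^j$ lives in $\Lb^5\Lb^{10}$ on the relevant exterior cone; the key observation is that profiles with $\lambda_{j,n}\gg t_n$ are forced to be zero (no mass at spatial infinity) and the profile at scale $\sim t_n$ has compactly supported initial data. Once this $\Lb^5\Lb^{10}$ bound is in hand, Duhamel on the truncated source $f(u)\indic_{\{|x|>t-A\}}$ produces a linear profile $v_L^A$, and the radiation fields $g^A$ are then glued across $A$ to build a single $(v_0,v_1)$. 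Your weak-limit definition of $(v_0,v_1)$ is plausible \emph{a posteriori} (indeed $v_L$ is one of the profiles of $\vec{u}(t_n)$), but the sentence ``using the weak convergence to identify the exterior trace of $\vec{S}_L(t)(v_0,v_1)$ with that of $\vec{S}_L(t)(u_0^>,u_1^>)$'' does not constitute a proof, and in any case cannot reach the region $\{t+A<|x|<t+R\}$ without the missing $\Lb^5\Lb^{10}$ bound.
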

\begin{thm}
\label{T:sequence_resolution}
Assume \eqref{H1}. Let $u$, $(v_0,v_1)$ be as in Proposition \ref{P:sequence_resolution}, and $t_n\to T_+(u)$ be a sequence of times such that the sequence
$\left(\|\vec{u}(t_n)\|_{\HHb}\right)_n$ is bounded. Then there exist a subsequence of $(t_n)_n$ (that we still denote by $(t_n)_n$),
an integer $J \in \mathbb{N}$, positive numbers $ (\lambda_{j,n})_{1 \leq j \leq J}$, and
$( Q_{j} )_{1 \leq j \leq J} \subset \mathcal{Z}$ such that
\begin{equation}
\lambda_{1,n} \ll \lambda_{2,n} \ll ... \ll \lambda_{J,n}, \; \text{and}
\label{Eqn:OrthCondLambda}
\end{equation}
\begin{itemize}
\item if $T_+(u)=+\infty$, then $\lambda_{J,n}\ll t_n$ (if $J\geq 1$), and, as $n\to\infty$,
\begin{equation}
\label{expansion_seq}
\vec{u}(t_{n}) =
 \vec{S}_L(t_{n})(v_0,v_1) + \sum \limits_{j=1}^{J} \frac{1}{\lambda_{j,n}^{\frac{1}{2}}}  \left( Q_{j} \left( \frac{x}{\lambda_{j,n}} \right) , 0 \right)
+ o(1) \text{ in }\HHb;
\end{equation}
\item if $T_+(u)<\infty$, then $J\geq 1$, $\lambda_{J,n}\ll T_+-t_n$ and, as $n\to\infty$,
\begin{equation}
\label{expansion_seq'}
\vec{u}(t_{n}) =
 (v_0,v_1) + \sum \limits_{j=1}^{J} \frac{1}{\lambda_{j,n}^{\frac{1}{2}}}  \left( Q_{j} \left( \frac{x}{\lambda_{j,n}} \right) , 0 \right)
+ o(1)\text{ in }\HHb.
\end{equation}
\end{itemize}
\end{thm}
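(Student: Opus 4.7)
The plan is to subtract off the radiation provided by Proposition \ref{P:sequence_resolution}, apply a linear profile decomposition to the remaining bounded sequence, and then argue via a channel-of-energy estimate that every nonlinear profile is necessarily a rescaled stationary solution in $\mathcal{Z}$. In the global case I would set $\vec{w}_n := \vec{u}(t_n) - \vec{S}_L(t_n)(v_0,v_1)$; in the blow-up case, $\vec{w}_n := \vec{u}(t_n) - (v_0,v_1)\chi_n$ with $\chi_n$ a radial cutoff to $\{|x|<T_+(u)-t_n\}$. By Proposition \ref{P:sequence_resolution}, the sequence $\vec{w}_n$ is bounded in $\HHb$ and, for every fixed $A>0$, its $\HHb$-norm restricted to $\{|x|>|t_n|+A\}$ (resp.\ $\{|x|>T_+(u)-t_n-A\}$) vanishes as $n\to\infty$.

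Passing to a subsequence, I would next apply the radial Bahouri--Gérard linear profile decomposition in $\HHb$ to $\vec{w}_n$:
$$
\vec{w}_n(x)=\sum_{j=1}^{J}\frac{1}{\lambda_{j,n}^{1/2}}\vec{V}_L^j\!\left(-\frac{t_{j,n}}{\lambda_{j,n}},\frac{x}{\lambda_{j,n}}\right)+\vec{r}_n^J,
$$
with pairwise orthogonal parameters $(\lambda_{j,n},t_{j,n})$, linear solutions $V_L^j$ of \eqref{eq:LW}, and $\limsup_n \|S_L(\cdot)\vec{r}_n^J\|_{L^5_tL^{10}_x}\to 0$ as $J\to\infty$ (the space translation parameter is absent by radiality, and concentration of $\vec{w}_n$ in a light cone forces $\lambda_{j,n}\lesssim t_n$, resp.\ $\lesssim T_+(u)-t_n$). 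To each $\vec{V}_L^j$ I would attach a nonlinear profile $V^j$ solving \eqref{eq:NLW}: the scattering solution when $-t_{j,n}/\lambda_{j,n}\to\pm\infty$, or the solution with prescribed Cauchy data $\vec{V}_L^j(\tau_j^*)$ at the limiting time $\tau_j^*\in\RR$ otherwise. Standard long-time perturbation theory for \eqref{eq:NLW} under \eqref{H1} then yields the nonlinear approximation
$$
\vec{u}(t_n+s)=\vec{S}_L(t_n+s)(v_0,v_1)+\sum_{j=1}^{J}\frac{1}{\lambda_{j,n}^{1/2}}\vec{V}^j\!\left(\frac{s-t_{j,n}}{\lambda_{j,n}},\frac{x}{\lambda_{j,n}}\right)+\vec{S}_L(s)\vec{r}_n^J+o_{\HHb}(1),
$$
valid on the intersection of the rescaled domains of the $V^j$.

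The heart of the proof is the classification of the profiles: specializing at $s=0$ reduces the theorem to showing that each $\vec{V}^j(\tau_j^*)$ equals $(Q_j,0)$ for some $Q_j\in\mathcal{Z}\cup\{0\}$. Assume toward contradiction that some $\vec{V}^j(\tau_j^*)$ is not of this form. Then the channel-of-energy lemma for radial solutions of the system \eqref{eq:NLW} (the system analog of the main estimate of \cite{DuKeMe13}, to be established in the body of the paper) should give a quantitative lower bound $\int_{|x|>|\tau-\tau_j^*|}(|\partial_tV^j|^2+|\nabla V^j|^2)\,dx\gtrsim \eta$ for $\tau$ in a one-sided neighborhood of $\tau_j^*$. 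Rescaling by $\lambda_{j,n}$ and using the pairwise orthogonality of the parameters to decouple contributions, this exterior energy would survive in $\vec{u}(t_n+s_n)$ for a suitably chosen $s_n$, contradicting the concentration property of the radiation-corrected sequence that follows from Proposition \ref{P:sequence_resolution} applied at time $t_n+s_n$.

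Once each profile is identified as $(Q_j,0)$ with $Q_j\in\mathcal{Z}$, I would reorder so that $\lambda_{1,n}\ll\ldots\ll\lambda_{J,n}$, pass to a diagonal subsequence to absorb $\vec{r}_n^J$ into the $o_{\HHb}(1)$ remainder, and deduce $\lambda_{J,n}\ll t_n$ (resp.\ $\lambda_{J,n}\ll T_+(u)-t_n$) from the same concentration estimates. In the blow-up case, small data theory rules out $J=0$ since $\vec{w}_n\to 0$ would force $u$ to scatter backwards from $(v_0,v_1)$. The main obstacle is the classification step: without the potential assumption \eqref{H0}, the scalar ODE comparison arguments of \cite{DuKeMe13} do not transfer directly, and I expect one must rely instead on a careful exterior energy inequality for the free wave equation in $\RR^3$ combined with a radial ODE analysis of the stationary equation $-\Delta Q = f(Q)$ in the vector-valued setting --- this is where the core technical novelty of the argument should lie.
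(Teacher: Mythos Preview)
Your overall strategy matches the paper's: profile decomposition of the radiation-corrected sequence, followed by channel-of-energy classification of the nonlinear profiles, with the rigidity theorem for radial nonradiative solutions as the key input. Two points, however, are genuine gaps rather than harmless imprecisions.

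First, the channel-of-energy input is misstated. You write the lower bound as $\int_{|x|>|\tau-\tau_j^*|}|\nabla_{t,x}V^j|^2\gtrsim\eta$ ``for $\tau$ in a one-sided neighborhood of $\tau_j^*$'', but for $\tau$ near $\tau_j^*$ the exterior region is nearly all of space and such a bound is trivial. The relevant statement (Corollary \ref{Cor:rigidity} in the paper) is \emph{asymptotic}: if $\vec{V}^j(\tau_j^*)\notin(\ZZZ\cup\{0\})\times\{0\}$, then there exists $R_0>0$ with $V^j\in \Lb^5\Lb^{10}(\{|x|>R_0+|t|\})$ and
\[
\sum_{\pm}\lim_{\tau\to\pm\infty}\int_{|x|>R_0+|\tau|}|\nabla_{t,x}V^j(\tau,x)|^2\,dx>0.
\]
The contradiction is then obtained not at a finite time $t_n+s_n$ but by sending $t\to\pm\infty$ and comparing with the radiative property of $u-v_L$ from Proposition \ref{P:sequence_resolution}. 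Consequently the nonlinear approximation must be run on exterior cones $\{|x|>R_n+|t-t_n|\}$ (this is the role of Theorem \ref{T:approx} and Lemma \ref{L:channel}), not on ``the intersection of the rescaled domains of the $V^j$''. In particular the paper needs a preliminary step ruling out profiles that fail to lie in $\Lb^5\Lb^{10}(\{|x|>|t|\})$, by choosing $R_n$ to scale with the largest such profile; your sketch does not address this.

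Second, the remainder does not vanish by diagonal extraction. Diagonalizing in $J$ only yields $\|S_L(\cdot)\vec{r}_n^{J(n)}\|_{\Lb^5\Lb^{10}}\to0$, not $\|\vec{r}_n^{J(n)}\|_{\HHb}\to0$, and the Pythagorean expansion shows these are genuinely different. The paper's argument is that once all nonzero profiles are stationary there are only finitely many of them (each $Q_j\in\ZZZ$ has $\|\nabla Q_j\|_{\Lb^2}$ bounded below), so the remainder stabilizes at some finite $J$; one then applies the \emph{linear} exterior-energy identity \eqref{lower_bound0} to $w_n^J$, feeds it through the same channel mechanism, and concludes $\|\vec{w}_n^J(0)\|_{\HHb}\to0$.
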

Let us mention that if $f$ satisfies \eqref{H0}, i.e. it is a potential-type nonlinearity then \eqref{liminf} holds. Indeed in this case any global solution is uniformly bounded in the energy space $\HHb$:
\begin{prop}
 \label{P:bounded}
Assume that \eqref{H0} holds. Let $u$ be a solution of \eqref{eq:NLW} such that $T_+(u)=\infty$. Then
$$\limsup_{t\to\infty}\|\vec{u}(t)\|_{\HHb}^2\leq 3E(u).$$
\end{prop}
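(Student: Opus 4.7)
\smallskip

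\noindent\emph{Plan.} I argue by contradiction: assume $\limsup_{t\to\infty}\|\vec u(t)\|_{\HHb}^2 > 3E(u)$. The first goal is to extract a sequence $t_n\to\infty$ on which $\|\vec u(t_n)\|_{\HHb}$ is bounded and $\lim_n\|\vec u(t_n)\|_{\HHb}^2 > 3E(u)$; then Theorem \ref{T:sequence_resolution} applies and yields a contradiction by a Pythagorean-type computation. The key algebraic input from \eqref{H0} is Euler's identity $u\cdot f(u)=6F(u)$, which also gives, for any stationary solution $Q\in\mathcal{Z}$, the Pohozaev relation
\[
\|\nabla Q\|_{L^2}^2 = \int f(Q)\cdot Q\,dx = 6\int F(Q)\,dx,
\qquad\text{so}\qquad \|\nabla Q\|_{L^2}^2 = 3E(Q).
\]

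\smallskip

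\noindent Assume for now that along some $t_n\to\infty$ the sequence $\|\vec u(t_n)\|_{\HHb}$ is bounded and $\|\vec u(t_n)\|_{\HHb}^2\to L>3E(u)$. By Theorem \ref{T:sequence_resolution}, after passing to a subsequence,
\[
\vec u(t_n)=\vec S_L(t_n)(v_0,v_1) + \sum_{j=1}^{J}\Bigl(\lambda_{j,n}^{-1/2}Q_j(\cdot/\lambda_{j,n}),0\Bigr) + o(1)\quad\text{in }\HHb,
\]
with scales $\lambda_{1,n}\ll\cdots\ll\lambda_{J,n}\ll t_n$ and $Q_j\in\mathcal{Z}$. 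The asymptotic orthogonality of rescaled profiles at different scales, together with the conservation of the linear energy, gives
\[
\|\vec u(t_n)\|_{\HHb}^2 = \|(v_0,v_1)\|_{\HHb}^2 + \sum_{j=1}^{J}\|\nabla Q_j\|_{L^2}^2 + o(1) = \|(v_0,v_1)\|_{\HHb}^2 + 3\sum_{j=1}^{J}E(Q_j)+o(1),
\]
using the Pohozaev identity above. Parallel to this, the energy Pythagorean theorem for profile decompositions (exploiting dispersion of $v_L$ in $L^6$ so that $\int F(v_L(t_n))\to 0$) yields $E(u)=\tfrac12\|(v_0,v_1)\|_{\HHb}^2+\sum_{j=1}^{J}E(Q_j)$. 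Subtracting,
\[
3E(u)-\lim_n\|\vec u(t_n)\|_{\HHb}^2 = \tfrac12\|(v_0,v_1)\|_{\HHb}^2 \geq 0,
\]
which contradicts $L>3E(u)$.

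\smallskip

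\noindent\emph{Main obstacle: the unbounded case.} The preceding argument only uses bounded sequences. If $\|\vec u(t)\|_{\HHb}$ stays bounded along \emph{some} sequence going to $\infty$, continuity of $\vec u(\cdot)\in \mathcal{C}([0,\infty),\HHb)$ and the intermediate value theorem produce a sequence $s_n\to\infty$ with $\|\vec u(s_n)\|_{\HHb}^2=M^2$ for any chosen $M^2\in(3E(u),L)$, and the argument above applies to $(s_n)$. The genuine difficulty is the case $\lim_{t\to\infty}\|\vec u(t)\|_{\HHb}=+\infty$, where no bounded subsequence exists. To rule this out I would exploit the virial identity derived by multiplying \eqref{eq:NLW} by $u$ and using Euler's identity,
\[
\frac{d}{dt}\int u\cdot\partial_t u\,dx = 2\|\partial_t u\|_{L^2}^2 + 2\bigl(\|\vec u\|_{\HHb}^2-3E(u)\bigr),
\]
localized by a radial cutoff $\chi_R$ so that the virial quantity is well defined; the radial Sobolev bound $|u(r)|\lesssim r^{-1/2}\|\nabla u\|_{L^2}$ controls the truncation $\int\chi_R^2 u\cdot\partial_t u\,dx$ by $CR\|\vec u\|_{\HHb}^2$, while the boundary term $\int\chi_R\chi_R' u\partial_r u\,dx$ is negligible for $R$ chosen suitably large depending on $t$. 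If $\|\vec u(t)\|_{\HHb}^2-3E(u)\to+\infty$, the RHS of the identity blows up, while integration of the LHS over $[t_0,t]$ forces linear-in-time growth of the truncated virial that is incompatible with its $O(R\|\vec u(t)\|^2)$ bound once one chooses $R=R(t)$ growing slowly enough. This pointwise/time-averaged contradiction is the technical heart of the argument, and it leverages crucially the sign given by Euler's identity under \eqref{H0}.
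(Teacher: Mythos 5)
Your split into two cases (bounded along some sequence vs.\ $\|\vec u(t)\|_{\HHb}\to\infty$) matches the architecture of the paper's proof, and your treatment of the first case is essentially the paper's. The Pohozaev identity $\|\nabla Q\|_{L^2}^2=3E(Q)$, the Pythagorean expansion of $\|\vec u(t_n)\|_{\HHb}^2$ using Theorem~\ref{T:sequence_resolution}, the decay of $\int F(v_L(t_n))$ via dispersion in $L^6$, and the intermediate-value-theorem reduction are all exactly the ingredients used in Section~\ref{S:bounded} for the implication \eqref{bound_explicit}.

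The gap is in the ``unbounded'' case, which is precisely the content of Lemma~\ref{L:boundedness}. Your virial identity
$\tfrac{d}{dt}\int u\cdot\partial_t u = 2\|\partial_tu\|_{L^2}^2 + 2(\|\vec u\|_{\HHb}^2-3E(u))$
is correct, but the way you propose to close the argument --- integrate once in time and compare with the crude bound $|\int\chi_R\, u\cdot\partial_tu|\lesssim R\|\vec u\|_{\HHb}^2$ --- does not yield a contradiction. Finite speed of propagation (which you need to control the cutoff errors) forces $R(t)\gtrsim t$, so the bound on the truncated virial at time $t$ is $O(t\,\|\vec u(t)\|^2)$, which can perfectly well dominate $\int_{t_0}^t\|\vec u(s)\|^2\,ds$ (take $\|\vec u(t)\|^2\sim t^2$: both sides are $\sim t^3$). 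You cannot choose $R(t)$ ``growing slowly'' as your sketch requires. The paper instead integrates the virial identity \emph{once more}: it works with $y(t)=\int\phi(x/t)|u(t,x)|^2\,dx$ (so your virial is essentially $\tfrac12 y'$), shows that the failure of the conclusion forces $y''\geq (4+\epsilon_0)\|\partial_t u\|^2+\epsilon_0$ for large $t$, combines this with a Cauchy--Schwarz estimate $y'\lesssim y^{1/2}(y'')^{1/2}$ to get the convexity inequality $\gamma (y')^2\leq y\,y''$ for some $\gamma>1$, and then runs the Levine blow-up ODE argument ($\tfrac{d}{dt}y^{1-\gamma}$ has a definite sign) to contradict global existence. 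This second-order convexity step, together with the exterior-energy claim \eqref{Eqn:EstOutCone} controlling boundary terms, is the genuine content of Lemma~\ref{L:boundedness} and is missing from your proposal.
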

\begin{rem}
It is an open question if the conclusion of Proposition \ref{P:bounded} still holds for general nonlinearities $f$ satisfying \eqref{H1}, but not \eqref{H0}. See also the discussion in the end of Section \ref{S:higherD} concerning analogs of the system \eqref{eq:NLW} in higher dimensions.
\end{rem}
Theorem \ref{T:sequence_resolution} asserts that for any sequence of times $t_n\to T_+(u)$, there exists a subsequence such that an expansion of the form \eqref{expansion_seq} or \eqref{expansion_seq'} holds. This is stronger than the existence of a single sequence $t_n\to T_+(u)$ having this property (as for example in the main theorem of \cite{DuJiKeMe17a}). As an immediate corollary, we obtain:
\begin{corol}
\label{cor:sequence}
 Assume \eqref{H1}, and $\ZZZ=\emptyset$. Let $u$ be a solution of \eqref{eq:NLW} such that
 $$ \liminf_{t\to T_+(u)}\|\vec{u}(t)\|_{\HHb}<\infty.$$
 Then $T_+(u)=+\infty$ and $u$ scatters for positive times to a linear solution.
\end{corol}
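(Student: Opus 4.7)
The plan is to combine Proposition~\ref{P:sequence_resolution} and Theorem~\ref{T:sequence_resolution} with a standard long-time perturbation argument for the quintic wave equation. By \eqref{liminf} there exists a sequence $t_n\to T_+(u)$ such that $(\|\vec u(t_n)\|_{\HHb})_n$ is bounded, so Proposition~\ref{P:sequence_resolution} produces a radiation pair $(v_0,v_1)\in\HHb$, and Theorem~\ref{T:sequence_resolution} applied along $(t_n)_n$ yields, after extraction, an integer $J\geq 0$, scales $\lambda_{j,n}$, and profiles $Q_j\in\mathcal{Z}$. The hypothesis $\mathcal{Z}=\emptyset$ immediately forces $J=0$.

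First I would dispose of finite-time blow-up: if $T_+(u)<\infty$, the finite-time part of Theorem~\ref{T:sequence_resolution} requires $J\geq 1$, contradicting $J=0$; hence $T_+(u)=+\infty$. With $J=0$, the expansion \eqref{expansion_seq} reduces along the subsequence to
\[
\vec u(t_n)-\vec S_L(t_n)(v_0,v_1)\longrightarrow 0\quad\text{in }\HHb.
\]

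The main obstacle is then to upgrade this subsequential statement to a continuous-in-time scattering claim. For this, set $v_L(t)=S_L(t)(v_0,v_1)$. By the standard Strichartz estimate, $v_L\in L^5_tL^{10}_x(\mathbb{R}\times\mathbb{R}^3)$, so $\|v_L\|_{L^5_tL^{10}_x([t_n,\infty)\times\mathbb{R}^3)}\to 0$. Since $f$ is homogeneous of degree $5$, we also obtain $\|f(v_L)\|_{L^1_tL^2_x([t_n,\infty)\times\mathbb{R}^3)}\lesssim \|v_L\|_{L^5_tL^{10}_x([t_n,\infty)\times\mathbb{R}^3)}^5\to 0$. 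Viewing $v_L$ as an approximate solution of \eqref{eq:NLW} on $[t_n,\infty)$ whose initial data at $t=t_n$ agrees with $\vec u(t_n)$ up to an $\HHb$-error tending to zero, the long-time perturbation lemma for the energy-critical wave equation on $\mathbb{R}^{1+3}$ gives, for $n$ large, that $u\in L^5_tL^{10}_x([t_n,\infty)\times\mathbb{R}^3)$ and $\|\vec u(t)-\vec v_L(t)\|_{\HHb}\to 0$ as $t\to+\infty$. This is precisely the scattering of $u$ to $v_L$ and concludes the proof. The only delicate point is that the perturbation lemma is applied not with a nonlinear approximate solution but with a linear one; this is handled by the dispersive smallness of $v_L$ past $t_n$ combined with the quintic nature of $f$.
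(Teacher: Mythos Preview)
Your proof is correct but follows a different path than the paper at the ``upgrade to continuous time'' step. Both arguments begin identically: $\mathcal{Z}=\emptyset$ forces $J=0$ in Theorem~\ref{T:sequence_resolution}, which rules out $T_+(u)<\infty$ and gives $\vec u(t_n)-\vec v_L(t_n)\to 0$ in $\HHb$ along a subsequence.

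From there the paper avoids any perturbation argument. It exploits the fact that Theorem~\ref{T:sequence_resolution} applies to \emph{every} sequence $t_n\to\infty$ along which $\vec u(t_n)$ is bounded, not just one: extracting from any such sequence yields $J=0$ and hence $\vec u(t_n)-\vec v_L(t_n)\to 0$. It then uses the intermediate value theorem to show that $\|\vec u(t)\|_{\HHb}$ cannot escape between such times (a sequence with $\|\vec u(\bar t_n')\|_{\HHb}=\|(v_0,v_1)\|_{\HHb}+1$ would itself be bounded, forcing the same convergence, a contradiction). This gives $\|\vec u(t)-\vec v_L(t)\|_{\HHb}\to 0$ directly.

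Your route is the more classical scattering argument: once $\vec u(t_n)$ is close to $\vec v_L(t_n)$ at a single large time and $\|v_L\|_{L^5L^{10}([t_n,\infty))}$ is small, long-time perturbation (Theorem~\ref{T:LTPT} with $\Gamma=\RR^4$, $I=[t_n,\infty)$) gives $u\in L^5L^{10}([t_n,\infty))$ and scattering. This trades the repeated use of the sequential resolution plus the IVT for a single perturbation step; it also yields the Strichartz bound $u\in L^5L^{10}([0,\infty))$ as a by-product, which the paper's argument gives only after the fact.
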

Indeed, the fact that $J\geq 1$ if $T_+(u)<\infty$ (in the conclusion of Theorem \ref{T:sequence_resolution})  shows that we must have $T_+(u)=+\infty$. Next Theorem \ref{T:sequence_resolution} and a contradiction argument show that for any sequence $t_n\to\infty$ such that $\left(\|\vec{u}(t_n)\|_{\HHb}\right)_n$ is bounded, $\lim_{n \to \infty}\|(\vec{u}-\vec{v}_L)(t_n)\|_{\HHb}=0$, where $\vec{v}_L(t)=S_L(t)(v_0,v_1)$. We next claim that for any sequence $t_{n} \to \infty$ the sequence
 $\left( \|\vec{u}(t_n)\|_{\HHb}\right)_n$ is bounded. Indeed assume that this is not true. Then using also the fact that there exists at least one sequence $\bar{t}_{n} \to \infty $ such that $\left(\|\vec{u}(\bar{t}_n)\|_{\HHb}\right)_n$ is bounded  we see from the intermediate value theorem that there exists a sequence $\bar{t}^{'}_{n} \to 
 \infty $ such that $ \|\vec{u}(\bar{t}^{'}_n)\|_{\HHb} =  \left\| (v_{0},v_{1}) \right\|_{\HHb} + 1 $, which contradicts  $ \lim_{n \to \infty} \| (\vec{u}-\vec{v}_L) (\bar{t}^{'}_n) \|_{\HHb} = 0 $ and the identity  $ \left\| \vec{v}_L (\bar{t}^{'}_n) \right\|_{\HHb} =
 \left\| (v_{0},v_{1}) \right\|_{\HHb} $. This concludes the proof of Corollary \ref{cor:sequence}.


\subsection{Resolution for all times}
In the case where $\ZZZ$ is not empty, the assumption \eqref{H0}, and even the stronger assumption \eqref{H1} do not seem sufficient to prove a continuous in time resolution i.e. the analogue of Theorem \ref{T:DKM} for systems. We were however able to prove such a result with the following additional assumptions \eqref{Eqn:AssFinite} and \eqref{Eqn:AssEj} on $\mathcal{Z}$:
\begin{equation}
\tag{A2}
\text{The set }E(\ZZZ)=\{E(Z),\; Z\in \ZZZ\}\text{ is finite.}
\label{Eqn:AssFinite}
\end{equation}
Assuming \eqref{Eqn:AssFinite}, we will denote \begin{equation}
\label{defEj}
E(\ZZZ)=\{E_1,\ldots, E_p\},\quad 0<E_1<\ldots<E_p,
\end{equation}
and we will make the following assumption:
\begin{multline}
\tag{A3}
\label{Eqn:AssEj}
\forall j\in \{1,\ldots,m\},\, \forall (\alpha_k)_{k}\in \NN^{m},\\ E_j=\sum_{k=1}^m \alpha_k E_k
\Longrightarrow \left(\alpha_j=1\text{ and }\forall k\neq j,\; \alpha_k=0\right).
\end{multline}

\begin{rem}
The fact that $E_1>0$ follows from
Proposition \ref{Prop:PropZ} (see in particular \eqref{I:Z1} and \eqref{I:Z2} in the statement of this Proposition).
%
\end{rem}
We write below our theorem on resolution into stationary solutions:
 \begin{thm}
 \label{T:continuous_resolution}
Assume \eqref{H0}, \eqref{Eqn:AssFinite} and \eqref{Eqn:AssEj}. There exists a subset $K$ of $\Sigma$, compact for the strong topology of $\Hb$, with the following property. Let $u$ be a solution of \eqref{eq:NLW} such that $T_{+}(u)= \infty$ or \eqref{liminf} holds.
Then there exist an integer $J\geq 0$, and, for $j\in \{1,\ldots,J\}$, real-valued functions $\lambda_j\in \mathcal{C}^0([t_0,T_+(u)))$ with $\lambda_j(t)>0$,  and continuous functions $Q_j:[t_0,T_+(u))\to K$, such that
$$\lambda_{1}(t) \ll \lambda_{2}(t) \ll... \ll \lambda_{J}(t),\quad t \to T_+(u)$$
and, letting $(v_0,v_1)$ be as in Theorem \ref{T:sequence_resolution},
\begin{itemize}
 \item If $T_+(u)=+\infty$ then, as $t\to\infty$,   $\lambda_J(t)\ll t$  (if $J\geq 1$), and
 \begin{equation*}
\vec{u}(t) =S_L(t)(v_0,v_1)+ \sum_{j=1}^{J} \left(\frac{1}{\lambda_{j}^{\frac{1}{2}}(t)} Q_{j} \left( t, \frac{\cdot}{\lambda_{j}(t)} \right),0\right) + o(1)\quad \text{in }\HHb.
\end{equation*}
\item If $T_+(u)<\infty$, then $J\geq 1$ and, as $t\to T_+(u)$, $\lambda_J(t)\ll T_+-t$ and
\begin{equation*}
\vec{u}(t) =(v_0,v_1)+ \sum_{j=1}^{J} \left(\frac{1}{\lambda_{j}^{\frac{1}{2}}(t)} Q_{j} \left( t, \frac{\cdot}{\lambda_{j}(t)} \right),0\right) + o(1)\quad \text{in }\HHb.
\end{equation*}
\end{itemize}
\end{thm}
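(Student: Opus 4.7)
The plan is to upgrade the sequential soliton resolution of Theorem~\ref{T:sequence_resolution} to a continuous-in-time one, using the arithmetic rigidity of \eqref{Eqn:AssEj} to stabilize the bubble structure across time, and then constructing the continuous parameters $(\lambda_j(t),Q_j(t))$ via a nested localization and modulation procedure. I would proceed in three stages.

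In the first stage, I would pin down the profile data along sequences. Under \eqref{H0}, Proposition~\ref{P:bounded} (in the global case) and the liminf hypothesis (in the blow-up case) give a uniform bound on $\|\vec{u}(t)\|_{\HHb}$, so Theorem~\ref{T:sequence_resolution} applies along \emph{every} sequence $t_n\to T_+(u)$. The radiation $(v_0,v_1)$ is intrinsic by Proposition~\ref{P:sequence_resolution}, and asymptotic energy orthogonality of the decompositions \eqref{expansion_seq}--\eqref{expansion_seq'} yields
\begin{equation*}
E(u) = \tfrac12\|(v_0,v_1)\|_{\HHb}^2 + \sum_{j=1}^J E(Q_j),
\end{equation*}
so that the total soliton energy $E_{\mathrm{sol}}:=E(u)-\tfrac12\|(v_0,v_1)\|_{\HHb}^2$ is intrinsic to $u$, and each $E(Q_j)$ lies in the finite set $\{E_1,\ldots,E_p\}$ by \eqref{Eqn:AssFinite}. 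The non-degeneracy assumption \eqref{Eqn:AssEj} is then used to rule out bubble splitting or merging across time: no single bubble of energy $E_j$ can collapse into (or emerge from) a collection of bubbles whose energies sum to $E_j$.

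In the second stage, I would produce the compact set $K\subset\Sigma$. For each $k\in\{1,\ldots,p\}$, the scaling-invariant set $\ZZZ_k:=\{Z\in\ZZZ:E(Z)=E_k\}$ is compact modulo scaling via a concentration-compactness argument for the radial elliptic system $-\triangle Z=f(Z)$ at fixed positive energy $E_k$. Choosing one representative per scaling orbit yields a compact subset $K$ of $\Sigma$ containing, after rescaling, all profiles arising in any subsequential resolution of $u$.

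In the third stage, I would extract the parameters $(\lambda_j(t),Q_j(t))$ continuously in $t$, inductively from the outermost bubble inward. Define $\lambda_J(t)$ implicitly, e.g.\ as the smallest scale at which the $\HHb$-energy of $\vec{u}(t)$ localized to $|x|\le\lambda_J(t)$ first captures $E_{\mathrm{sol}}$ up to a fixed small error, and take $Q_J(t)$ to be the element of $K$ closest in $\Hb$ to the rescaled profile $\lambda_J(t)^{1/2}u(t,\lambda_J(t)\cdot)$. For $t$ close enough to $T_+(u)$, Stages~1 and~2 force this projection to be unique, and continuity of $\lambda_J,Q_J$ is inherited from that of $t\mapsto\vec{u}(t)$ in $\HHb$. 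Subtracting the outermost bubble and the radiation, the procedure is iterated on the inner $J-1$ bubbles. The main obstacle is this third stage: one must rule out drift of $Q_j(t)$ along non-compact orbits of $\ZZZ$ and control uniformly in $t$ the interaction between adjacent bubbles and between the innermost bubble and the outer radiation. This is precisely where the new compactness and localization arguments advertised in the abstract will enter, presumably via finite-speed-of-propagation-based exterior energy estimates isolating the solitonic region from the radiation, combined with a uniform-in-$t$ quantitative rigidity for the modulation parameters.
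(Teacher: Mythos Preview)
Your Stage~1 contains a genuine gap. You claim that \eqref{Eqn:AssEj} ``rules out bubble splitting or merging'' because the total soliton energy $E_{\mathrm{sol}}$ is intrinsic and each $E(Q_j)\in\{E_1,\ldots,E_p\}$. But \eqref{Eqn:AssEj} only asserts that each individual level $E_j$ has a unique representation as $\sum_k\alpha_kE_k$; it does \emph{not} say that arbitrary sums do. For instance, $\{E_1,E_2,E_3\}=\{3,5,7\}$ satisfies \eqref{Eqn:AssEj}, yet $E_1+E_3=10=2E_2$, so a two-bubble configuration $(3,7)$ could a priori transition to $(5,5)$ without changing $E_{\mathrm{sol}}$. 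Moreover, even if the \emph{multiset} of energies were fixed, you would still need that the energies are fixed in the order dictated by the scales $\lambda_{1,n}\ll\cdots\ll\lambda_{J,n}$, which is a separate issue. So the passage from ``$E_{\mathrm{sol}}$ is intrinsic'' to ``$J$ and the ordered list $(E(Q_1),\ldots,E(Q_J))$ are intrinsic'' is where the real work lies, and your sketch does not supply it.

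The paper's mechanism here (Proposition~\ref{Prop:SameOrderNrj}) is quite different from what you outline: one defines, for each $j$ and each time $t$, a \emph{continuous} localized energy $\widetilde{E}_j(t)$ by setting $\tilde\lambda_j(t)$ to be the scale at which the gradient energy of $u-v_L$ in $\{|x|\le\lambda\}$ first exceeds $3\sum_{k<j}E(Q_k^-)+\eps$, and then $\widetilde{E}_j(t)$ is the gradient energy in $\{|x|\le\bar C\tilde\lambda_j(t)\}$ minus $3\sum_{k<j}E(Q_k^-)$. A key claim (Claim~\ref{Claim:GrothLocNorm}, which \emph{does} use \eqref{Eqn:AssEj} via the compactness of $K$) shows that along any resolving sequence, $\widetilde{E}_j(t_n)\to 3E(Q_j)$ up to $\eps$. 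One then compares two sequences $t_n^\pm$ via the intermediate value theorem applied to the continuous function $t\mapsto\widetilde{E}_j(t)$, exploiting the positive gap $m$ between distinct energy levels to derive a contradiction. This IVT/localized-energy argument is the heart of the proof and is absent from your plan. Your Stages~2 and~3 are closer in spirit to the paper (the paper also defines $\lambda_j(t)$ implicitly by an energy threshold and projects onto the compact $K$), though note that the paper defines all $\lambda_j(t)$ directly rather than peeling off bubbles outermost-in, and the projection to $K$ is handled by a soft metric-space lemma (Lemma~\ref{L:compact}) rather than by a nearest-point map, which need not be single-valued or continuous.
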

\begin{rem}
The proof of Theorem \ref{T:continuous_resolution} shows that one can take:
 \begin{equation*}
K = \left\{ Q \in \ZZZ,\quad \int_{|x| \leq 1}  |\nabla Q|^{2} \, dx =  \frac{1}{2}\int |\nabla Q|^2\,dx.\right\}.
\end{equation*}
\end{rem}

\begin{rem}
\label{R:energy}
Consider $f$, $u$ satisfying the assumptions of the theorem with $T_+(u)=+\infty$.
Observe that the conservation of energy, the condition  $ \lambda_{1}(t) \ll ... \ll \lambda_{J}(t) \ll t$, and similar arguments as the proof of Lemma
\ref{Lem:CompactnessK} (in particular those from (\ref{Eqn:EstFDecoupl}) to the end of the proof)  show that
$  \sum \limits_{j=1}^{J} E(Q_{j}) = E(u) - E(\vec{v}_{L}(t))  + o(1)$  for $ t \gg 1 $ with
\begin{equation}
\begin{array}{l}
E(\vec{v}_{L}(t)) := \frac{1}{2} \int_{\mathbb{R}^{3}} | \nabla  v_{L}(t)|^{2} \,dx
+  \frac{1}{2} \int_{\mathbb{R}^{3}} | \partial_{t} v_{L}(t)|^{2} \,dx + \int_{\mathbb{R}^{3}} F(v_{L}(t)) \,dx .
\end{array}
\nonumber
\end{equation}
Recall that $ \| v_{L} (t) \|_{L^{6}} \rightarrow 0 $ as $ t \rightarrow \pm \infty $ \footnote{this is clear from the dispersive estimates for smooth data; if the data belongs to $\dot{H}^{1}(\mathbb{R}^{3})$  then this follows from an approximation argument in $\dot{H}^{1}(\mathbb{R}^{3})$ combined with the embedding
$\dot{H}^{1}(\mathbb{R}^{3}) \subset L^{6}(\mathbb{R}^{3})$.}. Hence, using also that $|F(u)|\lesssim |u|^6$ and the fact that the kinetic part of $E(\vec{v}_{L}(t))$ is conserved, we get
\begin{equation}
\begin{array}{l}
\sum \limits_{j=1}^{J} E(Q_{j}) = E(u) - \frac{1}{2} \int_{\mathbb{R}^{3}} | \nabla  v_{0}|^{2} \,dx  - \frac{1}{2} \int_{\mathbb{R}^{3}} |v_{1}|^{2} \,dx,
\end{array}
\nonumber
\end{equation}
with $(v_{0},v_{1}):=\vec{v}_{L}(0)$.
\end{rem}
%


\subsection{Examples}
We next give some applications of Theorem \ref{T:continuous_resolution} for specific nonlinearities. In Section \ref{S:stationary}, we will prove (assuming \eqref{H0}):
\begin{equation}
 \label{CNS_emptyZ'}
  \ZZZ\neq \emptyset \iff \exists u\in \RR^m \text{ s.t. } F(u)>0.
 \end{equation}
(see Proposition \ref{P:GroundState}). As a consequence of \eqref{CNS_emptyZ'} and Theorem \ref{T:continuous_resolution}, we have
\begin{corol}
 \label{cor:emptyZ}
 Assume \eqref{H0} and $F(u)\leq 0$ for all $u\in \RR^m$. Then every solution of \eqref{eq:NLW} is global and scatters to a linear solution.
\end{corol}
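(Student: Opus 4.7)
\medskip

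\noindent\textbf{Proof proposal.} The plan is to combine the characterization \eqref{CNS_emptyZ'} of the existence of stationary solutions with the uniform energy bound that follows from the sign assumption on $F$, and then invoke Corollary \ref{cor:sequence}.

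First, I would observe that the hypothesis $F(u)\leq 0$ for all $u\in \RR^m$ together with \eqref{CNS_emptyZ'} immediately yields $\ZZZ=\emptyset$. Indeed, \eqref{CNS_emptyZ'} states that $\ZZZ\neq \emptyset$ is equivalent to the existence of some $u\in \RR^m$ with $F(u)>0$, which is ruled out here.

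Second, using the conservation of energy under \eqref{H0} and the non-positivity of $F$, for every $t$ in the domain of existence of a solution $u$,
\begin{equation*}
\tfrac{1}{2}\|\vec{u}(t)\|_{\HHb}^{2}
= E(\vec{u}(t))+\int_{\RR^{3}}F(u(t))\,dx
\leq E(u).
\end{equation*}
Hence $\|\vec{u}(t)\|_{\HHb}^{2}\leq 2E(u)$ throughout $(T_{-}(u),T_{+}(u))$, so in particular the liminf assumption \eqref{liminf} is satisfied.

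Third, since $\ZZZ=\emptyset$ and \eqref{liminf} holds, Corollary \ref{cor:sequence} applies and yields $T_{+}(u)=+\infty$ together with the existence of $(v_{0},v_{1})\in \HHb$ such that
\begin{equation*}
\lim_{t\to+\infty}\|\vec{u}(t)-\vec{S}_{L}(t)(v_{0},v_{1})\|_{\HHb}=0,
\end{equation*}
i.e.\ $u$ scatters to a linear solution for positive times. Finally, the system \eqref{eq:NLW} is invariant under the time reversal $t\mapsto -t$ (it is of second order in $t$ with no odd-order time derivatives), so applying the same argument to $\tilde{u}(t,x):=u(-t,x)$, which solves \eqref{eq:NLW} with initial data $(u_{0},-u_{1})$, gives $T_{-}(u)=-\infty$ and scattering for negative times as well.

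There is essentially no obstacle here beyond correctly invoking the already-established ingredients: the nontrivial content is packed into \eqref{CNS_emptyZ'} (proved later via the variational characterization of stationary solutions) and into Corollary \ref{cor:sequence} (which itself rests on Theorem \ref{T:sequence_resolution}). The only subtle point to double-check is that the a~priori bound on $\|\vec{u}(t)\|_{\HHb}$ also guarantees that no finite-time blow-up can occur for reasons other than the Type~II mechanism; but Corollary \ref{cor:sequence} already handles this, since the empty $\ZZZ$ case forces $T_{+}(u)=+\infty$ directly.
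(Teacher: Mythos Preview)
Your proof is correct and follows essentially the same route as the paper. The paper presents Corollary~\ref{cor:emptyZ} as an immediate consequence of \eqref{CNS_emptyZ'} and Theorem~\ref{T:continuous_resolution}, whereas you invoke Corollary~\ref{cor:sequence} (which rests only on Theorem~\ref{T:sequence_resolution}) after explicitly deriving the uniform bound $\|\vec{u}(t)\|_{\HHb}^{2}\leq 2E(u)$ from $F\leq 0$ and conservation of energy. This is a slightly more elementary path: Corollary~\ref{cor:sequence} does not require the structural assumptions \eqref{Eqn:AssFinite} and \eqref{Eqn:AssEj} (though with $\ZZZ=\emptyset$ these are vacuous anyway), and your explicit a~priori bound makes transparent why \eqref{liminf} holds even before global existence is known---a step the paper leaves implicit. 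The time-reversal argument for negative times is also a correct and natural addition.
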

Corollary \ref{cor:emptyZ} is already known in the case of a defocusing nonlinearity that satisfies:
$$ \exists c>0,\quad \forall u\in \RR^m, \quad -F(u)\geq c  |u|^6 $$
(where $|\cdot|$ is the Euclidean norm in $\RR^m$). In this case, the Morawetz inequality
$$ -\int_0^T \int_{\RR^3} \frac{F\left(u(t,x)\right)}{|x|}dx dt\lesssim E(u_0,u_1),$$
is sufficient to ensure global well-posedness and scattering  with a particularly simple proof in the radial case  (see e.g. \cite{GiSoVe92} and the introductions of \cite{Tao06DPDE} and \cite{Tao16}). Corollary \ref{cor:emptyZ} gives a generalization of this result, in the radial case, to a potential $F$ which is only assumed to be nonnegative.

\medskip

When there exists $u\in \RR^m$ such that $F(u)>0$, one can prove that there exists $\theta\in \RR^m\setminus \{0\}$ such that $f(\theta)=\theta$, and $\theta W$ is a stationary solution of \eqref{eq:NLW}. If furthermore $f$ is odd, the system \eqref{eq:NLW} admits solutions of the form $\theta \varphi$, where $\varphi$ is solution of the scalar focusing equation $\partial_{tt}\varphi-\triangle \varphi=\varphi^5$. As a consequence, there exist type II blow-up solutions, as the one constructed in \cite{KrScTa09}, and global, non-scattering solutions that are not stationary, as in \cite{KrSc07} and \cite{DoKr13}.

In some cases, we can compute the set $\ZZZ$, giving a more explicit version of Theorem \ref{T:continuous_resolution}. As a typical example, consider the focusing nonlinearity given by $F(u)=-\frac 16 |u|^6$, $f(u)=-|u|^4u$, where $|\cdot|$ is the Euclidean norm on $\RR^m$. In this case we have (see Example \ref{Ex:Euclidean})
$$\ZZZ=\left\{\omega W_{(\lambda)}\;:\; \omega\in \RR^m,\; |\omega|=1,\; \lambda>0\right\}.$$
As a consequence, the elements of $\ZZZ$ all have the same energy $E(W,0)$, and the assumption \eqref{Eqn:AssEj} is satisfied. Theorem \ref{T:continuous_resolution} implies:
\begin{thm}
 \label{T:continuous_resolution'}
Assume that \eqref{H0} is satisfied with $F(u)=-\frac 16|u|^6$.
 Let $u$ be a solution of \eqref{eq:NLW} such that $T_{+}(u)= +\infty$.
Then there exist an integer $J\geq 0$, and, for $j\in \{1,\ldots,J\}$, real-valued functions $\lambda_j\in \mathcal{C}^0([t_0,+\infty))$ with $\lambda_j(t)>0$,  and continuous functions $\omega_j:[t_0,+\infty)\to S^{m-1}$, such that as $t \rightarrow +\infty$, we have
\begin{equation}
\label{SR12}
\vec{u}(t) = S_L(t)(v_0,v_1)+ \sum_{j=1}^{J} \left(\frac{\omega_j(t)}{\lambda_{j}^{\frac{1}{2}}(t)} W\left( \frac{\cdot}{\lambda_{j}(t)} \right),0\right) + o(1)\quad \text{in }\HHb
\end{equation}
with $\lambda_{1}(t) \ll \lambda_{2}(t) \ll... \ll \lambda_{J}(t) \ll t$ as $t \to +\infty$.
\end{thm}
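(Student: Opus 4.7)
The plan is to apply Theorem \ref{T:continuous_resolution} to the specific nonlinearity $f(u)=-|u|^4u$ coming from the potential $F(u)=-\frac{1}{6}|u|^6$, and then to rewrite the decomposition in the concrete form \eqref{SR12} by using the explicit description of $\mathcal{Z}$ provided by Example \ref{Ex:Euclidean}.

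First I would verify the hypotheses of Theorem \ref{T:continuous_resolution}. Assumption \eqref{H0} is built into the definition of $F$. By Example \ref{Ex:Euclidean},
\[
\mathcal{Z} = \bigl\{ \omega W_{(\lambda)} \;:\; \omega \in S^{m-1},\; \lambda>0 \bigr\}.
\]
Since $|\omega|=1$ and $E$ is scaling-invariant, every $Z=\omega W_{(\lambda)}\in\mathcal{Z}$ satisfies $E(Z)=E(W)$ (one simply uses that the nonlinear part of the energy depends on $u$ only through $|u|^6$, and $|\omega W_{(\lambda)}|=W_{(\lambda)}$). Hence $E(\mathcal{Z})=\{E(W)\}$, so \eqref{Eqn:AssFinite} holds with $p=1$ and $E_1=E(W)>0$. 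Assumption \eqref{Eqn:AssEj} reduces to the tautology $E_1=\alpha_1 E_1\Rightarrow \alpha_1=1$. Finally, since $T_+(u)=+\infty$, Proposition \ref{P:bounded} gives $\limsup_{t\to\infty}\|\vec{u}(t)\|_{\HHb}^2\leq 3E(u)<\infty$, so condition \eqref{liminf} is satisfied as well.

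Next I would identify the compact set $K\subset \Sigma$ provided by Theorem \ref{T:continuous_resolution}. By its explicit description,
\[
K=\Bigl\{Q\in\mathcal{Z}\;:\;\int_{|x|\leq 1}|\nabla Q|^2\,dx=\tfrac{1}{2}\int|\nabla Q|^2\,dx\Bigr\}.
\]
Because $|\nabla(\omega W_{(\lambda)})|^2=|\nabla W_{(\lambda)}|^2$ for $\omega\in S^{m-1}$, the concentration profile of the $\dot H^1$-norm of $\omega W_{(\lambda)}$ on balls depends only on $\lambda$. The function $\lambda\mapsto \int_{|x|\leq 1}|\nabla W_{(\lambda)}|^2\,dx/\int|\nabla W|^2\,dx$ is continuous, strictly decreasing from $1$ to $0$ as $\lambda$ runs over $(0,\infty)$, so there is a unique $\mu_*\in(0,\infty)$ at which it equals $\tfrac12$. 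Consequently $K=\{\omega W_{(\mu_*)}\,:\,\omega\in S^{m-1}\}$, and the map $\omega\mapsto \omega W_{(\mu_*)}$ is a homeomorphism from $S^{m-1}$ onto $K$ for the strong $\Hb$-topology.

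With this identification in hand, the decomposition furnished by Theorem \ref{T:continuous_resolution} reads
\[
\vec{u}(t)=S_L(t)(v_0,v_1)+\sum_{j=1}^J\Bigl(\tfrac{1}{\lambda_j^{1/2}(t)}Q_j\bigl(t,\tfrac{\cdot}{\lambda_j(t)}\bigr),0\Bigr)+o(1)\text{ in }\HHb,
\]
with $Q_j(t)=\omega_j(t)W_{(\mu_*)}$ for a continuous $\omega_j:[t_0,\infty)\to S^{m-1}$ (continuity of $\omega_j$ comes from the continuity of $Q_j$ into $K$ together with the homeomorphism above). A short computation using $W_{(\mu_*)}(y)=\mu_*^{-1/2}W(y/\mu_*)$ yields
\[
\tfrac{1}{\lambda_j^{1/2}(t)}Q_j\bigl(t,\tfrac{\cdot}{\lambda_j(t)}\bigr)=\tfrac{\omega_j(t)}{\widetilde\lambda_j^{1/2}(t)}W\!\left(\tfrac{\cdot}{\widetilde\lambda_j(t)}\right),\qquad \widetilde\lambda_j(t):=\mu_*\,\lambda_j(t).
\]
Renaming $\widetilde\lambda_j$ as $\lambda_j$, the scale ordering $\lambda_1(t)\ll\cdots\ll\lambda_J(t)\ll t$ is preserved since we only multiplied by a fixed constant. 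This establishes \eqref{SR12}.

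The expected main obstacle is almost entirely absorbed into the invocation of Theorem \ref{T:continuous_resolution}; the only genuine piece of work in the present proof is the identification of $K$ with $S^{m-1}$, which requires the strict monotonicity in $\lambda$ of the local concentration of $\|\nabla W_{(\lambda)}\|_{L^2}^2$ on the unit ball and the extraction of the continuous $\omega_j(t)$ from the continuous $Q_j(t)\in K$ through the inverse of the homeomorphism $\omega\mapsto\omega W_{(\mu_*)}$.
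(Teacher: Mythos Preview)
Your proof is correct and follows exactly the paper's approach: the paper simply says ``Theorem \ref{T:continuous_resolution} implies'' and states Theorem \ref{T:continuous_resolution'}, whereas you have spelled out the verification of \eqref{Eqn:AssFinite} and \eqref{Eqn:AssEj} via Example \ref{Ex:Euclidean}, the identification of $K$ with a copy of $S^{m-1}$, and the extraction of the continuous $\omega_j$ through the homeomorphism $\omega\mapsto\omega W_{(\mu_*)}$. One cosmetic point: the sign in the paper's statement is a typo (with $F(u)=-\tfrac16|u|^6$ one would get $f(u)=-|u|^4u$, the defocusing equation with $\mathcal Z=\emptyset$); the intended potential is $F(u)=\tfrac16|u|^6$, giving $f(u)=|u|^4u$ as in Example \ref{Ex:Euclidean}, so your line ``$f(u)=-|u|^4u$'' should be adjusted accordingly, but this does not affect the argument.
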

(With the usual variant in the case where $T_+(u)<\infty$ and \eqref{liminf} holds).
Note that the preceding example includes the case of complex valued solutions of the equation $\partial_{tt}u-\triangle u=|u|^4u$, where $|u|$ is the modulus. In this case, by a standard lifting lemma, one can take $\omega_j(t)=e^{i\theta_j(t)}$, where $\theta$ is a continuous, real-valued function.

There are other examples where the set $\ZZZ$ can be computed explicitly, see Section \ref{S:stationary}. 
The complete description of the set $\ZZZ$ for general nonlinearity satisfying \eqref{H0} or the weaker assumption \eqref{H1} seems a difficult question. One interesting open problem is the existence of nonlinearities $f$ satisfying \eqref{H0} such that $\ZZZ$ admits some elements which are not linear combinations (with coefficients in $\RR^m$) of rescaled $W$.

\subsection{Comments on the proofs}
The proof of Theorem \ref{T:sequence_resolution} is based on the \emph{channels of energy} method, which was introduced in  \cite{DuKeMe11a}, and extended in of \cite{DuKeMe13} to prove Theorem \ref{T:DKM}. The key point is to classify the so-called \emph{non-radiative solutions} of \eqref{eq:NLW}, that are solutions defined at least on the set $|x|>|t|$, and such that
\begin{equation}
 \label{nonrad}
 \sum_{\pm}
\lim_{t\to\pm\infty} \int_{|x|>|t|} |\nabla_{t,x}u(t,x)|^2dx=0.
\end{equation}
The elements of $\ZZZ$ are non-radiative. In Section \ref{S:rigidity} we prove the following rigidity theorem: any nonzero solution of \eqref{eq:NLW} such that \eqref{nonrad} holds is in $\ZZZ$. The proof is close to the one of the analogous result in \cite{DuKeMe13}. Using concentration compactness argument (based on the profile decomposition of Bahouri and G\'erard \cite{BaGe99}) one can deduce the resolution for sequence of times (Theorem \ref{T:sequence_resolution}) from the rigidity theorem (see Section \ref{S:sequence_resolution}).

For the scalar focusing wave equation, the full resolution (Theorem \ref{T:DKM}) follows quickly from its sequential version (see \cite{DuKeMe13}). The proof of Theorem \ref{T:continuous_resolution} assuming Theorem \ref{T:sequence_resolution} in the case of systems is much more intricate. A key point is to prove, using assumptions \eqref{Eqn:AssFinite} and \eqref{Eqn:AssEj}, that the number $J$ and the energies of the stationary states $Q_j$ in any expansion of the form \eqref{expansion_seq} are fixed (see Subsection \ref{sub:SameOrder}). Note that in the scalar case, this crucial property follows directly from the conservation of the energy and the fact that all nonzero stationary solutions have the same energy.

\subsection{Outline of the paper}

We start with some notations and preliminaries on well-posedness and profile decomposition, see Section \ref{S:notations}. We develop in particular, using finite speed of propagation, a well-posedness theory on domains of influences and a nonlinear profile decomposition outside wave cones for the system \eqref{eq:NLW} in the spirit of Subsections 2.3 and 2.4 of \cite{DuKeMe21a}.

In Section \ref{S:stationary}, we study the set $\ZZZ$, in the cases where assumption \eqref{H1} or the stronger \eqref{H0} is satisfied. We prove the rigidity theorem in Section \ref{S:rigidity}, the sequential resolution in Section \ref{S:sequence_resolution}.
The next two sections are devoted to potential-like nonlinearities, i.e. nonlinearities satisfying \eqref{H0}. We first prove Proposition \ref{P:bounded}, i.e. that when \eqref{H0} is satisfied, global solutions are bounded in $\HHb$ along a sequence of times (see Section \ref{S:bounded}). Section \ref{S:continuous_resolution} concerns the resolution for all times.

We will only detail the proofs of Theorems \ref{T:sequence_resolution} and \ref{T:continuous_resolution} when $T_+(u)=+\infty$. The proofs when $T_+(u)$ is finite are very similar and are omitted. We refer to Section 4 of \cite{DuKeMe13} for a sketch of Theorem \ref{T:DKM} in the finite time blow-up case, which can be easily combined with the arguments of Sections  \ref{S:sequence_resolution} and \ref{S:continuous_resolution} to obtain Theorems
\ref{T:sequence_resolution} and \ref{T:continuous_resolution} in this case.

\medskip

We conclude the introduction by giving several references on systems of wave equations that are related to \eqref{eq:NLW}. The articles \cite{Azaiez15,AzaiezZaag17} concern the description of blow-up for a complex wave equation in space dimension $1$. In the work \cite{Tao16} a radial blow-up solution is constructed for defocusing wave system with a potential-like supercritical nonlinearity in space dimension $3$ (i.e. equation \eqref{eq:NLW} where in the assumption \eqref{H0}, $F(u)$ is negative, and homogeneous of degree larger than $5$ outside the origin). We are not aware of any works on systems of wave equations with energy-critical nonlinearity as the one studied in this article. In Section \ref{S:stationary} we study the set of solutions of the elliptic system $-\triangle u=f(u)$ with $u\in \Hb$. This type of systems have been studied in several paper see e.g. \cite{QuittnerSouplet12} and references therein. We stress out however that the questions that are considered in these works (e.g. existence or non-existence of positive solutions) are quite distinct that the ones that are considered in Section \ref{S:stationary}.

\section{Notations and preliminaries}
\label{S:notations}

\subsection{Notations}
\label{sub:notations}
Let $n\geq 1$, and $\Omega$ an open subset of $\mathbb{R}^n$. Let $V$ be a normed vector space of distributions on $\mathbb{R}^n$. We will denote by $V(\Omega)$ the space of restrictions of functions of $V$ to $\Omega$, endowed with the norm
$$ \|\varphi\|_{V(\Omega)}=\inf_{\underline{\varphi}} \|\underline{\varphi}\|_V,$$
where the infimum is taken over all extensions $\underline{\varphi}$ of $\varphi$ to $\mathbb{R}^n$.

We will denote in bold font the space of functions with values in $\RR^m$.

With this convention, and a slight abuse of notation, for any measurable set $\Gamma \subset \RR^4$, $\Lb^p\Lb^q(\Gamma)$ is the space of measurable functions $u$ on $\Gamma$ such that the extension $\underline{u}$ of $u$ by $0$ to $\RR^4$ is in $\Lb^{p}\Lb^q$. We will denote:
$$\HHR=\HHb(\{|x|>R\})=\HR\times \Lb^2_R,$$
where
$$\HR=\Hb(\{|x|>R\}),\quad \Lb^2_R=\Lb^2_{rad}(\{|x|>R\}),$$
so that
$$\|(u_0,u_1)\|_{\HHR}^2=\int_R^{\infty}|\partial_ru_0(r)|^2r^2dr+\int_R^{\infty}|u_1(r)|^2r^2dr.$$

If $u$ is a function of space and time, we denote $\vec{u}=(u,\partial_tu)$.

In all this section, we assume that $f$ satisfies \eqref{H1}, but not necessarily \eqref{H0}, unless explicitly mentioned.


%
%
%
%
%

\subsection{Strichartz estimates and Cauchy theory}
\label{sub:Cauchy1}
Consider the free wave equation:
\begin{equation}
\label{eq:FW}
 \left\{ \begin{aligned}
\partial_{tt}u-\triangle u&=0\\
\vec{u}_{\restriction t=t_0}&=(u_0,u_1),
         \end{aligned}
\right.
\end{equation}
and the
inhomogeneous wave equation:
\begin{equation}
\label{eq:inh_wave}
 \left\{ \begin{aligned}
\partial_{tt}u-\triangle u&=\varphi\\
\vec{u}_{\restriction t=t_0}&=(u_0,u_1).
         \end{aligned}
\right.
\end{equation}
Let $\varphi\in L^1_{loc}(I,\Lb^2 )$ (where $I$ is an interval with $t_0\in I$), and $(u_0,u_1)\in \HHb$. By definition, the solution $u$ of \eqref{eq:inh_wave} on $I\times \RR^3$ is
$$ u(t)=S_L(t-t_0)(u_0,u_1)+\int_{t_0}^{t} \frac{\sin\left((t-s)\sqrt{-\triangle}\right)}{\sqrt{-\triangle}}\varphi(s)ds,$$
where $S_L(t)$ is defined in \eqref{defSL}. We recall the following Strichartz estimate: if $u$ is a solution of \eqref{eq:inh_wave}, then $u\in \Lb^5\Lb^{10}\left(I\times \RR^3 \right)$, $\vec{u}\in \mathcal{C}^0(I,\HHb)$ and
\begin{equation}
 \label{Strichartz}
\|u\|_{\mathbf{L}^5(I,\Lb^{10})}+\sup_{t\in I} \left\|\vec{u}(t)\right\|_{\HHb}\lesssim \|(u_0,u_1)\|_{\HHb}+\|\varphi\|_{\Lb^1\Lb^2(I\times \RR^3)}.
\end{equation}
By definition, a solution of \eqref{eq:NLW}.
on $I\times \RR^3$ is a solution of \eqref{eq:inh_wave} with $f(u)=\varphi$, such that $u\in L^5_{loc}(I,\Lb^{10})$.
Using \eqref{Strichartz}, the inequality
\begin{equation}
 \label{ineg_f}
 |f(u)-f(v)|\lesssim |u-v|\left(|u|^4+|v|^4\right)
\end{equation}
and a standard fixed point argument, one can prove (exactly as in the scalar case) that for any $(u_0,u_1)\in \HHb$ there is a unique solution of \eqref{eq:NLW}, defined on a maximal time of existence $(T_-,T_+)$, and that satisfies the blow-up criterion
$$ T_+<\infty\Longrightarrow u\notin \Lb^5\Lb^{10}\left([t_0,T_+)\times \RR^3 \right),\quad  |T_-|<\infty\Longrightarrow u\notin
\Lb^5\Lb^{10}\left((T_-,t_0]\times \RR^3 \right).$$
In the next subsection, we will develop, taking into account the finite speed of propagation, a more general Cauchy theory for \eqref{eq:NLW}.
\subsection{Cauchy theory on domains of influence}
\label{sub:Cauchy2}
We develop here a Cauchy Theory for \eqref{eq:NLW} in general space-times domain adapted to finite speed of propagation. This extends
\cite[Section 2.3]{DuKeMe21a}, where a Cauchy theory outside wave cones is developed for the scalar equation.

If $\Gamma$ is an open subset of $\RR\times \RR^3$ and $I$ is an interval, we denote
$$ \Gamma_I=\{(t,x)\in \Gamma,\; t\in I\}.$$
If $(t_0,x_0)\in \RR\times \RR^3$, we denote by $\overline{C}_{t_0,x_0}$ the past closed wave cone $\{(t,x)\in \RR\times \RR^3,\; |x-x_0|\leq t_0-t\}$.
\begin{definition}
Following \cite{Alinhac95Bo},
 we say that an open subset $\Gamma$ of $\RR\times \RR^3$ is a domain of influence if for all $(t_0,x_0)\in \Gamma$, $\overline{C}_{t_0,x_0}\subset \Gamma$.
\end{definition}
\begin{exemple}
 If $t_0\in \RR$, $A\in \RR$, then $\{(t,x),\; |x|\geq t+A\}$ and $\{(t,x),\; |x|\leq A-t\}$ are domains of influence.
\end{exemple}
 The finite speed of propagation for the linear wave equation implies the following property:
\begin{prop}
\label{P:FSP}
Let $\Gamma$ be a domain of influence, $-\infty<t_0<t_1\leq \infty$ and
 $u,\varphi$ and $v,\psi$ be two solutions of \eqref{eq:inh_wave} on $[t_0,t_1)\times \RR^3$. Assume $\varphi=\psi$ almost everywhere on $\Gamma_{[t_0,t_1)}$ and $\vec{u}(t_0)=\vec{v}(t_0)$ almost everywhere on $\Gamma_{\{t_0\}}$. Then $u=v$ almost everywhere on $\Gamma$.
\end{prop}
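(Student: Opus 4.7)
The plan is to reduce to the standard finite speed of propagation property for the linear inhomogeneous wave equation applied to the difference $w = u - v$. Setting $\chi = \varphi - \psi$, the function $w$ solves $\partial_{tt} w - \triangle w = \chi$ on $[t_0, t_1) \times \RR^3$ (a \emph{linear} equation, since the nonlinearities have been absorbed into $\varphi,\psi$) with $\chi = 0$ a.e.\ on $\Gamma_{[t_0,t_1)}$ and $\vec{w}(t_0) = 0$ a.e.\ on $\Gamma_{\{t_0\}}$. Since each component of $w\in\RR^m$ satisfies a decoupled scalar linear wave equation, I would reduce immediately to the scalar case $m=1$.

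To show $w=0$ a.e.\ on $\Gamma$, I would fix an arbitrary point $(t_*,x_*)\in \Gamma$ with $t_*>t_0$ and consider the truncated backward cone $K = \{(t,x) : t_0 \le t \le t_*,\ |x-x_*| \le t_*-t\}$. Since $\Gamma$ is a domain of influence, $\overline{C}_{t_*,x_*}\subset \Gamma$, and hence $K \subset \Gamma$; in particular both the source $\chi$ and the initial data $\vec{w}(t_0)$ vanish a.e.\ on $K$ and on $K\cap\{t=t_0\}$, respectively. The core of the proof is then the classical cone-energy identity: if one defines $E(t) = \tfrac12 \int_{|x-x_*|<t_*-t} (|\partial_t w|^2 + |\nabla w|^2)\,dx$, a divergence-theorem computation on the frustum yields
\begin{equation*}
E(t) \le E(t_0) + \int_{t_0}^{t}\int_{|x-x_*|<t_*-s} \chi\,\partial_t w\, dx\, ds,
\end{equation*}
the null-cone boundary term having a favorable sign. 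Combined with Gronwall and the vanishing hypotheses, this forces $E\equiv 0$, hence $w\equiv 0$ a.e.\ on $K$.

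For our Strichartz-class solutions (which are not a priori classical), I would justify the cone-energy estimate by approximation: pick sequences $(w_0^n,w_1^n)\in \mathcal C_c^\infty$ converging to $\vec w(t_0)$ in $\HHb$ and supported outside a slightly smaller ball than $K\cap\{t=t_0\}$, and $\chi^n\in\mathcal C_c^\infty$ converging to $\chi$ in $L^1_{\mathrm{loc}}(I, \Lb^2)$ and supported outside a slightly smaller cone than $K$. Both truncations are possible because $\Gamma$ is \emph{open} and the vanishing is a.e. For smooth, compactly supported data and source, the cone-energy identity holds classically and gives $w^n\equiv 0$ on the corresponding shrunken cones. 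Passing to the limit via the Strichartz bound \eqref{Strichartz} (applied to $w - w^n$ on $[t_0,t_*]$) yields $w=0$ a.e.\ on $K$. Letting $(t_*,x_*)$ range over $\Gamma_{(t_0,t_1)}$ and combining with the hypothesis at $t_0$ concludes the proof.

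The main obstacle is the approximation step: constructing mollified data and source that still vanish on smaller cones whose union exhausts $K$, and controlling the convergence in a topology strong enough to pass to the limit in the pointwise vanishing conclusion. This is routine but requires covering $\Gamma$ by slightly retracted backward cones and exploiting the openness of $\Gamma$, together with the density of compactly supported smooth functions in $\HHb$ and in $L^1_{\mathrm{loc}}(I,\Lb^2)$.
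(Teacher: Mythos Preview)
Your proposal is correct and follows the standard cone-energy argument for finite speed of propagation; the paper itself does not give a proof but simply refers to \cite[Remark 2.12]{KeMe08}, which is precisely this approach (reduce to the scalar linear equation for the difference, then use the local energy inequality on backward cones, justified by density for energy-class solutions). So your sketch is essentially what the cited reference does.
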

The proof is the same as in the scalar case, for which we refer to \cite[Remark 2.12]{KeMe08}.

As a consequence of Proposition \ref{P:FSP}, we can generalize the Cauchy theory on the whole space $\RR_{loc}\times \RR^3$ sketched in the previous subsection to domains of influence. To begin with, we define solutions of \eqref{eq:NLW} on domains of influence:
\begin{definition}
Let $\Gamma$ be a domain of influence, $-\infty<t_0<t_1\leq \infty$, $(u_0,u_1)\in \HHb(\Gamma_{\{t_0\}})$ and $\varphi\in \Lb^1_{loc}(\Gamma)$ such that $\varphi\in \Lb^1\Lb^2(\Gamma_{[t_0,T])})$ for all $T<t_1$.
A solution $u$ of \eqref{eq:inh_wave} on $\Gamma_{[t_0,t_1)}$ is the restriction to $\Gamma_{[t_0,t_1)}$ of any solution $\underline{u}$ of
the equation
$$ \partial_{tt}\underline{u}-\triangle \underline{u}=\underline{\varphi},\quad (t,x)\in [t_0,t_1)\times \RR^3,$$
where $\underline{\varphi}\in \Lb^1\Lb^2([t_0,T)\times \RR^3)$ for all $T<t_1$, $\vec{\underline{u}}(t_0)\in \HHb$, $\underline{\varphi}=\varphi$ a.e. on $\Gamma_{[t_0,t_1)}$,
$\vec{\underline{u}}(t_0)=\vec{u}(t_0)$ a.e. on $\Gamma_{\{t_0\}}$.

A solution $u$ of \eqref{eq:NLW} on $\Gamma_{[t_0,t_1)}$ is a solution of \eqref{eq:inh_wave} on $\Gamma_{[t_0,t_1)}$ with $u\in\Lb^5\Lb^{10}(\Gamma_{[t_0,T]})$, for all $T\in (t_0,t_1)$ and $\varphi=f(u)$.
\end{definition}
By finite speed of propagation, the solution $u$ of \eqref{eq:inh_wave} on $\Gamma_{[t_0,t_1)}$ depends only of the values of $\varphi$ on $\Gamma$ and of $(u_0,u_1)$ on $\Gamma_{\{t_0\}}$ and is therefore unique. Also, adapting the usual well-posedness theory and using finite speed of propagation, we obtain:
\begin{prop}
\label{Prop:WP}
Let $\Gamma$ be a domain of influence, $t_0\in \RR$, with $\Gamma_{\{t_0\}}\neq \emptyset$ $(u_0,u_1)\in \HHb(\Gamma_{\{t_0\}})$. Then there exists a maximal time of existence $T_+$ and a unique maximal solution $u$ of \eqref{eq:NLW} defined on $\Gamma_{[t_0,T_+)}$. Furthermore, one has the following blow-up criterion:
$$T_+<\max\{ t\in \RR,\; \Gamma_t\neq \emptyset\}\Longrightarrow u\notin \Lb^5\Lb^{10}(\Gamma_{[t_0,T_+)}).$$
\end{prop}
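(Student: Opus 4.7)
The plan is to reduce the statement to the whole-space Cauchy theory of Subsection \ref{sub:Cauchy1} by extending the data off $\Gamma_{\{t_0\}}$, and then to use the finite speed of propagation encoded in Proposition \ref{P:FSP} to ensure that the resulting object is independent of the chosen extension and glues properly.

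\emph{Local existence.} Given $(u_0,u_1)\in \HHb(\Gamma_{\{t_0\}})$, pick an extension $(\underline{u}_0,\underline{u}_1)\in \HHb$ with comparable norm. The standard Strichartz/fixed point argument sketched in Subsection \ref{sub:Cauchy1}, based on \eqref{Strichartz} and the pointwise inequality \eqref{ineg_f}, produces a solution $\underline{u}$ of \eqref{eq:NLW} on $[t_0,t_0+\delta)\times \RR^3$ for some $\delta>0$ depending on $\|(\underline{u}_0,\underline{u}_1)\|_{\HHb}$. Its restriction to $\Gamma_{[t_0,t_0+\delta)}$ is by definition a solution of \eqref{eq:NLW} on that set, and by Proposition \ref{P:FSP} this restriction is unaffected by the choice of extension.

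\emph{Uniqueness on $\Gamma$.} Let $u,v$ be two solutions of \eqref{eq:NLW} on $\Gamma_{[t_0,t_1)}$ coinciding on $\Gamma_{\{t_0\}}$. Set $t^{\star} = \sup\{t\in[t_0,t_1) : u=v \text{ a.e.\ on } \Gamma_{[t_0,t)}\}$. If $t^{\star}<t_1$, continuity of $\vec{u}$ and $\vec{v}$ into $\HHb(\Gamma_{\{t^{\star}\}})$ (inherited from \eqref{Strichartz}) together with the local well-posedness above applied at time $t^{\star}$ would force $u=v$ slightly beyond $t^{\star}$, a contradiction.

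\emph{Maximal solution and blow-up criterion.} Define $T_+$ as the supremum of those $t_1\leq \max\{t : \Gamma_t\neq \emptyset\}$ for which a solution exists on $\Gamma_{[t_0,t_1)}$. By the uniqueness just proved, all such local solutions coincide on overlaps and glue into one maximal solution $u$ on $\Gamma_{[t_0,T_+)}$. Assume for contradiction that $T_+<\max\{t : \Gamma_t\neq \emptyset\}$ while $u\in \Lb^5\Lb^{10}(\Gamma_{[t_0,T_+)})$. Split this norm into finitely many pieces of small $\Lb^5\Lb^{10}$-norm; on each piece the Strichartz estimate \eqref{Strichartz} combined with \eqref{ineg_f} yields a uniform bound on $\|\vec{u}(t)\|_{\HHb(\Gamma_{\{t\}})}$ and, by a Cauchy-in-$t$ argument, produces a limit $(w_0,w_1)\in \HHb(\Gamma_{\{T_+\}})$. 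Applying the local existence step to $(w_0,w_1)$ at time $T_+$ yields a solution on a short interval $\Gamma_{[T_+,T_++\delta)}$; finite speed of propagation (Proposition \ref{P:FSP}) guarantees that this new piece matches $u$ at $t=T_+$ inside $\Gamma$, so the two assemble into a solution on $\Gamma_{[t_0,T_++\delta)}$, contradicting the definition of $T_+$.

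\emph{Main obstacle.} The only non-routine point is the blow-up criterion: one must propagate the hypothesis $u\in \Lb^5\Lb^{10}(\Gamma_{[t_0,T_+)})$ into a genuine $\HHb$-limit at $t=T_+$ through a Strichartz subdivision argument that is a priori formulated on $\RR\times\RR^3$, and then splice the continued solution onto $u$ inside $\Gamma$ using Proposition \ref{P:FSP}. Once the Cauchy-in-$t$ step and the splicing are done carefully, everything else reduces to the standard scalar-case bookkeeping recalled in Subsection \ref{sub:Cauchy1}.
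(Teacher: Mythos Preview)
Your proposal is correct and follows essentially the same route as the paper. The paper does not spell out a proof but simply states that Proposition~\ref{Prop:WP} is a consequence of the small data result Proposition~\ref{P:smalldata}, itself obtained by ``Strichartz estimates and a fixed point argument, using finite speed of propagation''; your extension-to-$\RR^3$ plus Proposition~\ref{P:FSP} argument is exactly this machinery unpacked, and your subdivision argument for the blow-up criterion is the standard way to turn the small data statement into the continuation one.
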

The maximal time of existence above depends on $\vec{u}_0$, $t_0$ and $\Gamma$ and we will sometimes denote it as $T_+(\vec{u}_0,t_0,\Gamma)$ when there is an ambiguity.

Proposition \ref{Prop:WP} is a consequence of the following small data well-posedness result, which can be proved by Strichartz estimates and a fixed point argument, using finite speed of propagation:
\begin{prop}
\label{P:smalldata}
There exists $\delta_0>0$ with the following property.
 Let $(u_0,u_1)$, $t_0$ and $\Gamma$ be as in Proposition \ref{Prop:WP}. Let $t_1>t_0$. Let $u_L$ be the solution of the free wave equation \eqref{eq:FW} with initial data $(u_0,u_1)$ at $t=t_0$.
 Assume $\|u_L\|_{\Lb^5\Lb^{10}(\Gamma_{[t_0,t_1)})}<\delta_0$. Then there is a solution $u$ of \eqref{eq:NLW} on $\Gamma_{[t_0,t_1)}$. Furthermore
 $$\sup_{t_0\leq t<t_1}\left\|\vec{u}(t)-\vec{u}_L(t)\right\|_{\HHb(\Gamma_{\{t\}})}\lesssim \|(u_0,u_1)\|^5_{\HHb(\Gamma_{\{t_0\}})}.
 $$
\end{prop}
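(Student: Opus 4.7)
The plan is a standard Strichartz-based Picard iteration, with the only new ingredient being that the linear estimates must be localized to the domain of influence $\Gamma$; this localization is afforded by finite speed of propagation (Proposition \ref{P:FSP}).

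The first step is to transfer the inequality \eqref{Strichartz} from $\RR_t\times \RR^3$ to $\Gamma$. Given $(u_0,u_1)\in \HHb(\Gamma_{\{t_0\}})$, pick a radial extension $(\underline{u}_0,\underline{u}_1)\in \HHb$ with $\|(\underline{u}_0,\underline{u}_1)\|_{\HHb}\leq 2\|(u_0,u_1)\|_{\HHb(\Gamma_{\{t_0\}})}$ and set $\underline{u}_L(t)=S_L(t-t_0)(\underline{u}_0,\underline{u}_1)$. Proposition \ref{P:FSP} gives $\underline{u}_L=u_L$ almost everywhere on $\Gamma_{[t_0,\infty)}$, so
$$\|u_L\|_{\Lb^5\Lb^{10}(\Gamma_{[t_0,t_1)})}\leq \|\underline{u}_L\|_{\Lb^5\Lb^{10}}\lesssim \|(u_0,u_1)\|_{\HHb(\Gamma_{\{t_0\}})}.$$
The same extension trick, now applied to a source $\varphi\in \Lb^1\Lb^2(\Gamma_{[t_0,t_1)})$ extended by zero outside $\Gamma$, yields the corresponding inhomogeneous version: for any $w$ solving $\partial_{tt}w-\triangle w=\varphi$ on $\Gamma_{[t_0,t_1)}$ with $\vec{w}(t_0)=0$,
$$\sup_{t_0\leq t<t_1}\|\vec{w}(t)\|_{\HHb(\Gamma_{\{t\}})}+\|w\|_{\Lb^5\Lb^{10}(\Gamma_{[t_0,t_1)})}\lesssim \|\varphi\|_{\Lb^1\Lb^2(\Gamma_{[t_0,t_1)})}.$$

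Next I would set $A=\|u_L\|_{\Lb^5\Lb^{10}(\Gamma_{[t_0,t_1)})}<\delta_0$ and run a contraction in
$$X=\bigl\{u\in \Lb^5\Lb^{10}(\Gamma_{[t_0,t_1)}):\|u\|_{\Lb^5\Lb^{10}}\leq 2A\bigr\},$$
using the map $\Phi(u)=u_L+\mathcal{I}(f(u))$, where $\mathcal{I}$ denotes the inhomogeneous solution operator with zero data at $t_0$ on $\Gamma$. The pointwise bound $|f(u)|\lesssim |u|^5$, the inequality \eqref{ineg_f}, and Hölder in space-time give
$$\|f(u)\|_{\Lb^1\Lb^2}\lesssim \|u\|_X^5,\qquad \|f(u)-f(v)\|_{\Lb^1\Lb^2}\lesssim \|u-v\|_X\bigl(\|u\|_X^4+\|v\|_X^4\bigr).$$
Combined with the transferred Strichartz inequality this yields $\|\Phi(u)\|_X\leq A+C(2A)^5$ and $\|\Phi(u)-\Phi(v)\|_X\leq C(2A)^4\|u-v\|_X$, both furnishing the hypotheses of Banach's fixed point theorem once $\delta_0$ is small enough that $C(2\delta_0)^4\leq 1/2$. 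The unique fixed point is the required solution; uniqueness in the full $\Lb^5\Lb^{10}$ class on $\Gamma_{[t_0,t_1)}$ is then obtained by the standard subdivision argument.

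Finally the claimed quantitative estimate reads off the Duhamel representation $\vec{u}-\vec{u}_L=\mathcal{I}(f(u))$ via the same transferred Strichartz bound:
$$\sup_{t_0\leq t<t_1}\|\vec{u}(t)-\vec{u}_L(t)\|_{\HHb(\Gamma_{\{t\}})}\lesssim \|f(u)\|_{\Lb^1\Lb^2(\Gamma_{[t_0,t_1)})}\lesssim \|u\|_X^5\lesssim A^5\lesssim \|(u_0,u_1)\|_{\HHb(\Gamma_{\{t_0\}})}^5,$$
the last inequality being the Strichartz bound on $u_L$ already established. The only genuinely new point compared with the classical $\RR_t\times \RR^3$ small-data theory is the finite-speed-of-propagation transfer of Strichartz to $\Gamma$; the rest is the scalar argument applied componentwise, made possible by the vector-valued estimate \eqref{ineg_f}. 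I expect no real obstacle — the potentially delicate point, namely that the extension-and-restriction procedure produces the same answer as a direct Duhamel formula on $\Gamma$, is exactly what Proposition \ref{P:FSP} guarantees.
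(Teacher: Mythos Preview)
Your proof is correct and follows exactly the approach the paper indicates: the paper does not spell out the argument but simply says the proposition ``can be proved by Strichartz estimates and a fixed point argument, using finite speed of propagation,'' which is precisely what you have done. The transfer of Strichartz to $\Gamma$ via extension and Proposition \ref{P:FSP}, the contraction in a ball of $\Lb^5\Lb^{10}(\Gamma_{[t_0,t_1)})$, and the Duhamel estimate for $\vec{u}-\vec{u}_L$ are all standard and correctly executed.
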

We note that the well-posedness theory sketched above is not time reversible in general. Indeed, to define a backward flow for the equation \eqref{eq:NLW} on $\Gamma$, one must assume that for all $(t_0,x_0)$ in $\Gamma$, the future wave cone
$\{(t,x),\; |x-x_0|\leq t-t_0\}$ is included in $\Gamma$ (in this case $\Gamma$ is called a \emph{domain of dependence}). However, the only nonempty subset of $\RR^4$ which is both a domain of influence and a domain of dependence is $\RR^4$.

We will typically use the theory above to study the wave equations \eqref{eq:LW}, \eqref{eq:NLW} on $\{|x|>|t|+R\}$, with initial data defined for $|x|>R$ at $t=0$, relying on the domain of influence $\{(t,x), \; |x|>t+R\}$ for positive times, and on the domain of dependence $\{(t,x), |x|>-t+R\}$ for negative times.

We conclude this subsection by stating and proving a long-time perturbation theory result on domains of influence.
\begin{thm}
 \label{T:LTPT}
 Let $\Gamma$ be a domain of influence, $-\infty<t_0<t_1\leq \infty$, $I=[t_0,t_1)$. Let $A>0$. There exists $\delta_0=\delta_0(A)$ with the following property. Let $(u_0,u_1),\, (v_0,v_1)\in \HHb(\Gamma_{\{t_0\}})$. Assume that
\begin{gather*}
 \left\|S_L(\cdot-t_0)\big((u_0,u_1)-(v_0,v_1)\big)\right\|_{\Lb^5\Lb^{10}(\Gamma_I)}=\delta\leq \delta_0\\
 \partial_{tt}v-\triangle v=f(v)+e,\quad \vec{v}_{\restriction t=t_0}=(v_0,v_1)\\
 \|e\|_{\Lb^1\Lb^2(\Gamma_I)}=\eps\leq \delta_0,\quad \|v\|_{\Lb^5\Lb^{10}(\Gamma_I)}\leq A.
 \end{gather*}
 Let $u$ be the forward solution of \eqref{eq:NLW} on $\Gamma$, with initial data $(u_0,u_1)$ at $t=t_0$. Then $u$ is defined on $\Gamma_I$ and satisfies:
 \begin{gather}
  \label{u-v1}
  \big\|u-v\big\|_{\Lb^5\Lb^{10}(\Gamma_I)}\lesssim_A \delta+\eps\\
  \label{u-v2}
  \sup_{t\in I}\left\|\vec{u}(t)-\vec{v}(t)\right\|_{\HHb(\Gamma_{\{t\}})}\lesssim_A \left\|(u_0,u_1)-(v_0,v_1)\right\|_{\HHb(\Gamma_{\{t_0\}})}+\delta+\eps.
 \end{gather}
\end{thm}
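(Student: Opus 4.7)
\medskip

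\textbf{Proof plan for Theorem \ref{T:LTPT}.}
The plan is to reduce the statement on the domain of influence $\Gamma$ to a long-time perturbation estimate on the full space $\RR\times \RR^3$, and then to run a classical bootstrap along a finite partition of $I$ adapted to the size $A$ of $\|v\|_{\Lb^5\Lb^{10}(\Gamma_I)}$.

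First I would remove $\Gamma$ from the picture. Using the definition of the restriction norms as infima over extensions, I pick almost-optimal extensions $(\underline{u}_0,\underline{u}_1)$ and $(\underline{v}_0,\underline{v}_1)$ in $\HHb$ of the initial data, and an extension $\underline{e}\in \Lb^1\Lb^2(I\times\RR^3)$ of $e$, whose norms are comparable to the norms on $\Gamma$. I would also choose $\underline{v}$ to be the unique forward $\HHb$-solution of $\partial_{tt}\underline{v}-\triangle \underline{v}=f(\underline{v})+\underline{e}$ on $I\times\RR^3$ (splitting $I$ and using Strichartz inductively so that $\underline{v}\in \Lb^5\Lb^{10}(I\times \RR^3)$ with norm comparable to $A$). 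Similarly, by the small-data theory (Proposition \ref{P:smalldata}) applied in the sense of Proposition \ref{Prop:WP}, I let $\underline{u}$ be the maximal forward solution of \eqref{eq:NLW} with data $(\underline{u}_0,\underline{u}_1)$. By Proposition \ref{P:FSP}, $u$ and $v$ coincide with $\underline{u}$ and $\underline{v}$ on $\Gamma_I$, so it suffices to obtain \eqref{u-v1}--\eqref{u-v2} for $\underline{w}:=\underline{u}-\underline{v}$ on $\RR\times \RR^3$ (with $\delta$ still measured on $\Gamma_I$ after replacing $\Lb^5\Lb^{10}(\Gamma_I)$ by $\Lb^5\Lb^{10}(I\times\RR^3)$, which is allowed by choosing the extensions of the difference of data carefully so that $\|S_L(\cdot-t_0)((\underline{u}_0-\underline{v}_0),(\underline{u}_1-\underline{v}_1))\|_{\Lb^5\Lb^{10}(I\times\RR^3)}\lesssim \delta$).

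Second, I partition $I=\bigcup_{j=1}^N I_j$, $I_j=[s_{j-1},s_j]$, with $N=N(A,\eta)$, so that $\|\underline{v}\|_{\Lb^5\Lb^{10}(I_j\times\RR^3)}\leq \eta$ for a small absolute constant $\eta$ to be chosen. The equation for $w_j:=\underline{w}_{\restriction I_j}$ is
\begin{equation*}
\partial_{tt}w_j-\triangle w_j = f(\underline{v}+w_j)-f(\underline{v})-\underline{e},
\end{equation*}
so by the Strichartz estimate \eqref{Strichartz} together with the pointwise bound \eqref{ineg_f},
\begin{equation*}
\|w_j\|_{\Lb^5\Lb^{10}(I_j)}+\sup_{I_j}\|\vec{w}_j\|_{\HHb}
\leq C\Bigl(\|S_L(\cdot-s_{j-1})\vec{w}(s_{j-1})\|_{\Lb^5\Lb^{10}(I_j)}+\eta^4\|w_j\|_{\Lb^5\Lb^{10}(I_j)}+\|w_j\|_{\Lb^5\Lb^{10}(I_j)}^5+\eps\Bigr).
\end{equation*}
If $C\eta^4\leq 1/2$ and the bracketed ``data'' terms are small enough, a continuity argument gives
\begin{equation*}
\|w_j\|_{\Lb^5\Lb^{10}(I_j)}+\sup_{I_j}\|\vec{w}_j\|_{\HHb}\leq 2C\bigl(\|S_L(\cdot-s_{j-1})\vec{w}(s_{j-1})\|_{\Lb^5\Lb^{10}(I_j)}+\eps\bigr).
\end{equation*}

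Third, I propagate this estimate from $I_j$ to $I_{j+1}$. Writing
\begin{equation*}
\vec{w}(s_j)=S_L(s_j-t_0)\vec{w}(t_0)+\int_{t_0}^{s_j} S_L(s_j-s)\bigl(0,\,f(\underline{v}+w)-f(\underline{v})-\underline{e}\bigr)(s)\,ds,
\end{equation*}
I bound
\begin{equation*}
\|S_L(\cdot-s_j)\vec{w}(s_j)\|_{\Lb^5\Lb^{10}(I_{j+1})}\leq \|S_L(\cdot-t_0)\vec{w}(t_0)\|_{\Lb^5\Lb^{10}(I_{j+1})}+C\sum_{k\leq j}\bigl(\|w\|_{\Lb^5\Lb^{10}(I_k)}\bigl(\eta^4+\|w\|_{\Lb^5\Lb^{10}(I_k)}^4\bigr)+\eps\bigr).
\end{equation*}
Iterating the bootstrap bound of the previous step, a straightforward induction on $j$ shows
\begin{equation*}
\|S_L(\cdot-s_j)\vec{w}(s_j)\|_{\Lb^5\Lb^{10}(I_{j+1})}+\|w\|_{\Lb^5\Lb^{10}([t_0,s_j])}\leq K_j(\delta+\eps),
\end{equation*}
where $K_j$ depends only on $j$, $C$, and $\eta$, hence only on $A$. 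Choosing $\delta_0=\delta_0(A)$ so that $K_N(\delta_0+\delta_0)$ stays in the regime where the bootstrap hypothesis is valid at every step closes the induction and yields \eqref{u-v1}.

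Fourth, \eqref{u-v2} follows from the same Strichartz estimate applied to $\underline{w}$ on $[t_0,t]$, which gives
\begin{equation*}
\sup_{t\in I}\|\vec{\underline{w}}(t)\|_{\HHb}\leq C\bigl(\|(\underline{u}_0-\underline{v}_0,\underline{u}_1-\underline{v}_1)\|_{\HHb}+\|\underline{w}\|_{\Lb^5\Lb^{10}(I)}\bigl(\|\underline{v}\|^4_{\Lb^5\Lb^{10}(I)}+\|\underline{w}\|^4_{\Lb^5\Lb^{10}(I)}\bigr)+\eps\bigr),
\end{equation*}
and substituting the bound from \eqref{u-v1} together with the comparability of the extensions.

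The main obstacle is the third step: keeping careful track of how the linear Strichartz norm of $\vec{w}$ at the interior breakpoints $s_j$ is controlled by the original quantity $\delta$ at $t_0$ plus the accumulated nonlinear and error Duhamel contributions, so that the smallness hypothesis for the bootstrap is maintained across all $N(A)$ subintervals. This dictates the dependence $\delta_0=\delta_0(A)$.
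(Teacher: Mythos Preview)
There is a genuine gap in Step 1. You cannot in general extend $v$ to a solution $\underline v$ of $\partial_{tt}\underline v-\triangle\underline v=f(\underline v)+\underline e$ on all of $I\times\RR^3$ with $\|\underline v\|_{\Lb^5\Lb^{10}(I\times\RR^3)}$ comparable to $A$. The only hypothesis on $v$ is $\|v\|_{\Lb^5\Lb^{10}(\Gamma_I)}\le A$; there is no bound whatsoever on $\|(v_0,v_1)\|_{\HHb(\Gamma_{\{t_0\}})}$, and even with almost-optimal extensions of the data and of $e$, the nonlinear equation for $\underline v$ may blow up in the complement of $\Gamma$ well before $t_1$, or produce an $\underline v$ with arbitrarily large Strichartz norm there. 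Your parenthetical ``splitting $I$ and using Strichartz inductively'' is circular: that argument would require precisely the a priori Strichartz bound on $\underline v$ you are trying to obtain.

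The fix is to drop Step 1 and run Steps 2--4 directly on $\Gamma$, which is what the paper does. The point is that Strichartz holds on $\Gamma$ for the difference $w=u-v$: one extends the \emph{forcing} $f(u)-f(v)-e\in \Lb^1\Lb^2(\Gamma_I)$ and the initial difference $(u_0-v_0,u_1-v_1)$ almost-optimally to $I\times\RR^3$, solves the resulting \emph{linear} inhomogeneous equation there, and restricts; by finite speed of propagation (Proposition~\ref{P:FSP}) the restriction equals $w$ on $\Gamma_I$. With that inequality in hand, your partition of $I$ according to $\|v\|_{\Lb^5\Lb^{10}(\Gamma_{I_j})}\le\eta$, the bootstrap on each $I_j$, and the inductive propagation across breakpoints go through on $\Gamma$ exactly as you wrote them for $\RR^3$. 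The paper organises the same estimate differently---it keeps the whole interval, absorbs the $\|w\|^5$ term by a bootstrap assumption $C\|\eta\|_{L^5}^4\le\frac12$, and disposes of the remaining linear-in-$\eta$ integral $\int\eta(s)\|v(s)\|^4_{\Lb^{10}(\Gamma_{\{s\}})}ds$ by a Gronwall-type lemma from \cite{FaXiCa11}---but once Step 1 is removed that is only a cosmetic difference from your partition argument.
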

\begin{proof}
Let $T_+=T_+(\vec{u}_0,t_0,\Gamma)$ be the maximal time of existence of $u$. Let $T_1=\min(t_1,T_+)$. Let $\eta(t)=\|u-v\|_{\Lb^{10}(\Gamma_{\{t\}})}$, so that $\|u-v\|_{\Lb^5\Lb^{10}(\Gamma_I)}=\|\eta\|_{L^5(I)}$ for every interval $I\subset [t_0,T_1)$.

By Strichartz estimates, we have, for $t_0<t<T_1$,
\begin{multline*}
 \|\eta\|_{L^5([t_0,t])}\lesssim \delta
 +\eps+\int_{t_0}^t \left\|f(u(s))-f(v(s))\right\|_{\Lb^2(\Gamma_{\{s\}})}ds\\
 \lesssim \delta+\eps+\int_{t_0}^t \eta(s)\left(\eta(s)^4+\|v(s)\|^4_{\Lb^{10}(\Gamma_{\{s\}})}\right)ds
\end{multline*}
where we have used the Lipschitz property \eqref{ineg_f} of the nonlinearity. We thus have, for a constant $C>0$,
\begin{equation*}
 \|\eta\|_{L^5([t_0,t])}\leq
C\left(\delta+\eps+\|\eta\|_{L^{5}([t_0,t])}^5+\int_0^t \eta(s)\|v(s)\|^4_{\Lb^{10}(\Gamma_{\{s\}})}ds\right).
\end{equation*}
We first assume $C\|\eta\|^4_{L^{5}([t_0,t])}\leq \frac{1}{2}$, so that
\begin{equation*}
 \|\eta\|_{L^5([t_0,t])}\leq
2C\left(\delta+\eps+\int_0^t \eta(s)\|v(s)\|^4_{\Lb^{10}(\Gamma_{\{s\}})}ds\right).
\end{equation*}
Using a Gronwall-type result (see e.g. \cite[Lemma 8.1]{FaXiCa11}) we obtain, for some constant $M(A)$ depending only on $A$,
\begin{equation}
\label{desired_bound}
 \|\eta\|_{L^{5}([t_0,t])}\leq M(A)(\delta+\eps).
\end{equation}
Assuming $\max(\delta,\eps)\leq \delta_0$, where $\delta_0$ is such that $C M(A)^4(2\delta_0)^4=\frac{1}{4}$, we obtain $C\|\eta\|_{L^{5}([t_0,t])}^4\leq \frac{1}{4}$. Thus by a standard bootstrap argument, \eqref{desired_bound} holds for all $t\in [t_0,T_1)$. This shows that $T_1=t_1$ and that \eqref{u-v1} holds. Using \eqref{u-v1}, the energy estimate and H\"older inequality, we deduce \eqref{u-v2}.
\end{proof}
We next give a technical result on the extension of radial solutions defined on exterior of wave cones.
\begin{prop}
\label{P:defR-}
 Let $(u_0,u_1)\in \HHb$, $R\in \RR$
 and
 \begin{equation}
 \label{defGammaR}
\Gamma^R=\{(t,x),\;|x|>t+R\}
 \end{equation}
 Assume that there is a solution $u$ of \eqref{eq:NLW}, with initial data $(u_0,u_1)_{\restriction \Gamma^R_{\{0\}}}$ at $t=0$, defined on $\Gamma^R_{[0,\infty)}$ and such that $u\in \Lb^5\Lb^{10}\left(\Gamma^R_{[0,\infty)}\right)$. Then there is $\rho<R$ such that $u$ can be extended to as solution on $\Gamma^{\rho}_{[0,\infty)}$
 with initial data $(u_0,u_1)_{\restriction \Gamma^{\rho}_{\{0\}}}$ at $t=0$ and such that $u\in \Lb^5\Lb^{10}\left(\Gamma^{\rho}_{[0,\infty)}\right)$.
\end{prop}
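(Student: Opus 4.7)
The plan is to combine a finite-time perturbation argument with a small-data continuation past a large time $T$. Since $u\in\Lb^5\Lb^{10}(\Gamma^R_{[0,\infty)})$, for any prescribed $\eps>0$ there is a time $T$ with $\|u\|_{\Lb^5\Lb^{10}(\Gamma^R_{[T,\infty)})}<\eps$. By Duhamel's formula and the Strichartz estimate \eqref{Strichartz} applied on the domain of influence $\Gamma^R_{[T,\infty)}$, the free evolution $u_L^{(T)}:=S_L(\cdot-T)\vec u(T)$ then satisfies $\|u_L^{(T)}\|_{\Lb^5\Lb^{10}(\Gamma^R_{[T,\infty)})}\lesssim\eps+\eps^5\lesssim\eps$.

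For $\rho<R$ close to $R$, let $\tilde u$ be the maximal exterior solution on $\Gamma^\rho$ with data $(u_0,u_1)|_{\Gamma^\rho_{\{0\}}}$ provided by Proposition~\ref{Prop:WP}. By Proposition~\ref{P:FSP}, $\tilde u=u$ on $\Gamma^R$ wherever both are defined. On the bounded interval $[0,T]$, I would apply Theorem~\ref{T:LTPT} on $\Gamma=\Gamma^\rho$ with an approximate solution $v$ obtained by a smooth spacetime cutoff of $u$ across the null boundary $\partial\Gamma^R$, extending $u$ by $0$ inside. Because the transition layer has bounded $4$-volume on $[0,T]$, the equation error $e=\partial_{tt}v-\triangle v-f(v)$ (consisting of the term $(\phi-\phi^5)f(u)$ plus commutator terms from the derivatives of the cutoff, and exploiting the homogeneity $f(\phi u)=\phi^5 f(u)$) can be made small in $\Lb^1\Lb^2$, while the Strichartz contribution $\delta$ coming from the data difference (supported on the annulus $\{\rho<|x|<R\}$ and the transition zone at $t=0$) tends to $0$ as $\rho\to R$ and the cutoff is tuned. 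The conclusion is that $\tilde u$ exists on $\Gamma^\rho_{[0,T]}$, equals $u$ on $\Gamma^R_{[0,T]}$, and $\vec{\tilde u}(T)$ is close to $\vec u(T)$ on $\Gamma^R_{\{T\}}$ while its restriction to the thin strip $\Gamma^\rho_{\{T\}}\setminus\Gamma^R_{\{T\}}$ has small $\HHb$-norm.

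To continue past $T$, I consider the free evolution $w_L=S_L(\cdot-T)\vec{\tilde u}(T)$ on $\Gamma^\rho_{[T,\infty)}$. By Proposition~\ref{P:FSP}, $w_L$ agrees with $u_L^{(T)}$ on $\Gamma^R_{[T,\infty)}$ (norm $\lesssim\eps$ by the first paragraph), and on the strip it is the free evolution of data of arbitrarily small $\HHb$-norm, hence also small in $\Lb^5\Lb^{10}$ by Strichartz. Therefore $\|w_L\|_{\Lb^5\Lb^{10}(\Gamma^\rho_{[T,\infty)})}\lesssim\eps$, and Proposition~\ref{P:smalldata} extends $\tilde u$ to $\Gamma^\rho_{[T,\infty)}$ with bounded $\Lb^5\Lb^{10}$-norm; concatenation with the previous step yields the required extension. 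The main obstacle is the finite-time perturbation step: producing an approximate solution on $\Gamma^\rho$ out of the a~priori data-less $u$ on $\Gamma^R$, while keeping the equation error and the data difference small, requires a careful spacetime cutoff whose derivative terms are controlled using the boundedness of $\vec u$ in $L^\infty_t\HHR$ (a consequence of the energy identity and Strichartz on $\Gamma^R$). This is in the spirit of the cone-based perturbation techniques developed in Subsections~2.3 and~2.4 of \cite{DuKeMe21a}.
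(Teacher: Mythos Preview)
Your finite-time perturbation step contains a genuine gap. If $v=\phi u$ with $\phi=\chi((r-t-R)/\delta)$ a smooth null-adapted cutoff, the equation error $\partial_{tt}v-\triangle v-f(v)$ contains, besides the harmless $(\phi-\phi^5)f(u)$ term, the commutator
\[
-\frac{2\chi'}{\delta}\big(\partial_t u+\partial_r u\big)-\frac{2\chi'}{r\delta}\,u.
\]
To make this small in $\Lb^1\Lb^2$ on $[0,T]$ you would need $\|\nabla_{t,x}u(t)\|_{\Lb^2(\{R+t<r<R+t+\delta\})}=o(\delta)$, but the bound $\vec u\in L^\infty_t\HHR$ gives only $o(1)$ by absolute continuity of the integral, not $o(\delta)$; dividing by $\delta$ destroys smallness. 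There is no ``good null derivative'' estimate for $(\partial_t+\partial_r)u$ available from the hypotheses, and the lower-order term has the same obstruction via Hardy. So the error cannot be made small by shrinking the layer, and the appeal to boundedness of $\vec u$ in $L^\infty_t\HHR$ does not close the argument.

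The paper avoids this by extending the \emph{forcing} rather than the solution. Since $u\in\Lb^5\Lb^{10}(\Gamma^R_{[0,\infty)})$, the zero extension $\overline{f(u)}:=f(u)\indic_{\Gamma^R}$ lies in $\Lb^1\Lb^2([0,\infty)\times\RR^3)$, and one solves the \emph{linear} problem $\partial_{tt}v-\triangle v=\overline{f(u)}$ with the full data $(u_0,u_1)$ on $\RR^3$. By Strichartz $v\in\Lb^5\Lb^{10}([0,\infty)\times\RR^3)$ globally, and $v=u$ on $\Gamma^R$ by finite speed of propagation. Writing the sought extension as $w=v+h$ with $\vec h(0)=0$, the only inhomogeneous source in the fixed-point equation for $h$ is $f(v)\indic_{\Gamma^\rho\setminus\Gamma^R}$, which tends to $0$ in $\Lb^1\Lb^2$ as $\rho\to R^-$ by dominated convergence. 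No cutoff derivatives appear, and the fixed point runs directly on all of $[0,\infty)$, so the time-splitting $[0,T]\cup[T,\infty)$ is unnecessary.

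If you want to keep your perturbation framework, take the paper's $v$ (not a multiplicative cutoff of $u$) as the approximate solution in Theorem~\ref{T:LTPT}: then $e=-f(v)\indic_{\Gamma^\rho\setminus\Gamma^R}$ on $\Gamma^\rho$ and the data difference vanishes, so the theorem applies on $\Gamma^\rho_{[0,\infty)}$ in one shot.
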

\begin{rem}
Define $R_-=R_-(u_0,u_1)$ as the infimum of $R\in \RR$ such that there exists a solution $u$ on $\Gamma^{R_-}$ with initial data $(u_0,u_1)$. Proposition \ref{P:defR-} implies the following analog of the blow-up criterion in the variable $r=|x|$ instead of $t$:
$$ R_->-\infty\Longrightarrow u\notin \Lb^5\Lb^{10}(\Gamma^{R_-}).$$
\end{rem}
\begin{proof}
 We let $\overline{f(u)}$ be the extension of $f(u)$ to $[0,\infty)\times \RR^3$ such that $\overline{f(u)}(t,x)=0$ if $|x|<t+R$. By our assumption $\overline{f(u)}\in \Lb^1\Lb^2([0,\infty)\times \RR^3)$. We let $v$ be the solution of
 $$\partial_{tt}v-\triangle v=\overline{f(u)},\quad \vec{v}_{\restriction t=0}=(u_0,u_1).$$
By Strichartz estimate $v\in \Lb^5\Lb^{10}([0,\infty)\times \RR^3)$ and $\vec{v}\in \mathcal{C}^0([0,\infty),\HHb)$.
 By finite speed of propagation $v=u$ on $\Gamma_{[0,\infty)}^R$. We will solve, on $[0,\infty)\times \RR^3$:
 \begin{equation}
 \label{eq:w_rho}
  \partial_{tt} w-\triangle w=f(w)\indic_{\Gamma^{\rho}},\quad \vec{w}_{t=0}=(u_0,u_1).
 \end{equation}
 By finite speed of propagation, the solution $w$ of \eqref{eq:w_rho} must coincide with $v$ on $\Gamma^{R}_{[0,\infty)}$. Letting $h=w-v$, we see that \eqref{eq:w_rho} is equivalent to:
 \begin{equation}
 \label{eq:h_rho}
  \partial_{tt} h-\triangle h=\left(f(v+h)-f(v)\right)\indic_{\{\Gamma^{\rho}\setminus\Gamma^R\}},\quad \vec{h}_{t=0}=(0,0).
\end{equation}
Letting $\rho<R$ close enough to $R$, so that $f(v)\indic_{\{\Gamma^{\rho}\setminus\Gamma^R\}}$ is small in $\Lb^1\Lb^2$, we see that the equation \eqref{eq:h_rho} can be solved easily by fixed point in a small ball of $\Lb^5\Lb^{10}([0,\infty)\times \RR^3$, which concludes the proof.
\end{proof}

\subsection{Profile decomposition}
\label{sub:profile}
To prove Theorem \ref{T:sequence_resolution} by the channels of energy method, we need to approximate a sequence of solutions of \eqref{eq:NLW} outside wave cones. We do so by using the profile decomposition of Bahouri and G\'erard \cite{BaGe99}.

In \cite{BaGe99}, the profile decomposition is constructed for scalar functions, however it is easy to generalize to vector-valued functions by working on each component. We will only treat the radial case. We thus consider a bounded sequence $\left(u_{0,n},u_{1,n}\right)_n$ in $\HHb$, and let $u_{L,n}$ the corresponding solutions of the linear wave equation \eqref{eq:LW}.
By \cite{BaGe99}, there exists a subsequence of $\left(u_{0,n},u_{1,n}\right)_n$ (that we still denote by $\left(u_{0,n},u_{1,n}\right)_n$) with the following properties.

There exist a sequence $(U^j_{L})_{j\geq 1}$ of radial solutions of the linear equation \eqref{eq:LW} with initial data $(U^j_0,U^j_1)\in \HHb$, and, for $j\geq 1$, sequences $(\lambda_{j,n})_n$, $(t_{j,n})_n$ with $\lambda_{j,n}>0$, $t_{j,n}\in \RR$ satisfying the pseudo-orthogonality relation
\begin{equation}
\label{ortho_param}
j\neq k\Longrightarrow \lim_{n\to \infty} \frac{\lambda_{j,n}}{\lambda_{k,n}}+\frac{\lambda_{k,n}}{\lambda_{j,n}}+\frac{|t_{j,n}-t_{k,n}|}{\lambda_{j,n}}=+\infty.
\end{equation}
such that, denoting
\begin{equation}
\label{simplicity}
U^j_{L,n}(t,x)=\frac{1}{\lambda_{j,n}^{1/2}}U^j_{L}\left(\frac{t-t_{j,n}}{\lambda_{j,n}},\frac{x}{\lambda_{j,n}}\right),
\end{equation}
and
\begin{equation}
\label{decompo_profil}
w_{n}^J(t,x):=u_{n}(t,x)-\sum_{j=1}^J U^j_{L,n}(t,x)
\end{equation}
then
\begin{equation}
\label{small_w}
\lim_{J\rightarrow+\infty}\limsup_{n\rightarrow+\infty} \left\|w_n^J\right\|_{\Lb^{4}\Lb^{12}(\RR^4)}=0.
\end{equation}
One says that $(u_{0,n},u_{1,n})_n$ admits a \emph{profile decomposition} with profiles $\lf\{U_{L,n}^j\rg\}_j$.

 The following expansions hold for all $J\geq 1$:
 \begin{equation}
 \label{pythagore}
 \left\|(u_{0,n},u_{1,n}\rg\|_{\HHb}^2=\sum_{j=1}^J \left\|\left(U^j_{0},U^j_1\right)\rg\|_{\HHb}^2+\left\|(w_{0,n}^J,w_{1,n}^J)\rg\|_{\HHb}^2+o_n(1).
 \end{equation}
The profiles are constructed as follows. Let $v_n(t)=S_L(t)(u_{0,n},u_{1,n})$. Then
\begin{equation}
\label{weak_CV_Uj}
\left(\lambda_{j,n}^{1/2} v_n\left(t_{j,n},\lambda_{j,n}\cdot\right),\lambda_{j,n}^{3/2} \partial_tv_n\left(t_{j,n},\lambda_{j,n}\cdot\right)\right)\xrightharpoonup[n\to \infty]{}(U_0^j,U_1^j).
\end{equation}
weakly in $\HHb$. In other words, the initial data $(U^j_0,U^j_1)$ of the profiles are exactly the weak limits, in $\HHb$, of sequences $$\left(\lambda_n^{1/2}v_n(t_n,\lambda_n \cdot),\lambda_n^{3/2}\partial_tv_n(t_n,\lambda_n \cdot)\right)_n,$$
where $\left(\lambda_n\right)_n$, $\left(t_n\right)_n$ are sequences in $(0,\infty)$ and $\RR$ respectively.
Note that \eqref{weak_CV_Uj} for $j\in \{1,\ldots,J\}$ implies:
\begin{equation}
\label{weak_CV_wJ}
j\leq J\Longrightarrow
\left(\lambda_{j,n}^{1/2} w_n^J\left(t_{j,n},\lambda_{j,n}\cdot\right),\lambda_{j,n}^{3/2} \partial_tw_n^J\left(t_{j,n},\lambda_{j,n}\cdot\right)\right)\xrightharpoonup[n\to \infty]{}0,
\end{equation}
weakly in $\HHb$.

Using this profile decomposition, one can approximate sequences of solutions of the nonlinear wave equation \eqref{eq:NLW} in appropriate subsets of $\RR^4$. We will mainly need this approximation on sets of the form $\{|x|>R+|t|\}$. To state this result, we need to introduce more notations.

We let $(u_{0,n},u_{1,n})_n$ be a bounded sequence in $\HHb$, that admits a profile decomposition as above. Extracting subsequences and translating in time the solutions $U^j$, we can partition the set of indices $\NN^*=\NN\setminus \{0\}$ as $\NN^*=\JJJ^{out}\cup\JJJ^{in}\cup\JJJ^{0}$, where the set of outgoing indices $\JJJ^{out}$, the set of incoming indices $\JJJ^{in}$ and the set of compact indices $\JJJ^c$ are defined by
\begin{align*}
j\in \JJJ^{out}&\iff \lim_{n\to\infty}\frac{-t_{j,n}}{\lambda_{j,n}}=+\infty\\
j\in \JJJ^{in}&\iff \lim_{n\to\infty}\frac{-t_{j,n}}{\lambda_{j,n}}=-\infty\\
j\in \JJJ^0&\iff \forall n,\quad t_{j,n}=0.
\end{align*}
We next fix a sequence $(R_n)_n$ in $[0,\infty)$ and define $\Gamma_n=\{|x|>R_n+|t|\}$. Extracting subsequences again if necessary, we partition $\JJJ^0$ as $\JJJ^0=\JJJ^0_<\cup \JJJ^0_=\cup\JJJ^0_{>}$, where
\begin{gather}
\notag
j\in \JJJ^{0}_{>}\iff \lim_{n\to\infty}\frac{\lambda_{j,n}}{R_n}=+\infty,\quad
j\in \JJJ^{0}_{<}\iff \lim_{n\to\infty}\frac{\lambda_{j,n}}{R_n}=0\\
\label{defR}
j\in \JJJ^0_{=}\iff \lim_{n\to\infty} \frac{\lambda_{j,n}}{R_n}=\frac{1}{R}\in (0,\infty).
\end{gather}
We note that by the orthogonality property \eqref{ortho_param} of the parameters, $\JJJ^0_=$ has at most one element.

For $j\in \JJJ^0$, we consider the solution $U^j$ of \eqref{eq:LW} with initial data $(U_0^j,U^j_1)$ at $t=0$. If $j\in \JJJ^{in}\cup\JJJ^{out}$, we let $U^j=U^{j}_L$ (see Remark \ref{R:profiles} below). We rescale the profiles as
\begin{equation}
\label{def_Ujn}
 U^j_n(t,x)=\frac{1}{\lambda_{j,n}^{1/2}}U^j\left( \frac{t-t_{j,n}}{\lambda_{j,n}},\frac{x}{\lambda_{j,n}} \right).
\end{equation}
\begin{thm}
\label{T:approx}
Let $(u_{0,n},u_{1,n})_n$ and $R_n$ be as above. We assume
\begin{itemize}
\item for all $j\in \JJJ^0_{>}$, there exists $\eps_j$ such that the solution $U^j$ of \eqref{eq:NLW} exists on $\{|x|>|t|-\eps_j\}$, with $U^j\in \Lb^5\Lb^{10}(\{|x|>|t|-\eps_j\})$;
\item for $j\in \JJJ^0_{=}$, there exists $\eps_j>0$ such that $U^j$ exist on $\{|x|>|t|+R-\eps_j\}$, with $U^j\in \Lb^5\Lb^{10}(\{|x|>|t|+R-\eps_j\})$, where $R$ is defined by \eqref{defR}.
\end{itemize}
Then for large $n$, the solution $u_n$ of \eqref{eq:NLW} with initial data $(u_{0,n},u_{1,n})$ at $t=0$ on $\Gamma_n=\{|x|>R_n+|t|\}$ is global in time, satisfies
$$\limsup_{n\to\infty}\|u_n\|_{\Lb^5\Lb^{10}(\Gamma_n)}<\infty$$
and, denoting for all $J$
\begin{equation}
 \label{defrJn}
 \eps_{n}^J(t,x)=u_{n}(t,x)-\sum_{j=1}^JU^j_{n}(t,x)-w_n^J(t,x),
\end{equation}
one has
\begin{equation}
\label{small_epsnJ}
 \lim_{J\to\infty}\limsup_{n\to\infty}\left\|\eps_n^J\right\|_{\Lb^{5}\Lb^{10}(\Gamma_n)}+\sup_{t\in \RR}\left\|\vec{\eps}^J_n\right\|_{\HHb(\Gamma_{n,\{t\}})}=0,
\end{equation}
where $\Gamma_{n,\{t\}}$ is the set $\{x\in \RR^3\text{ s.t. }(t,x)\in \Gamma_n\}$.
\end{thm}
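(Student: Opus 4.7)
The plan is to realise $u_n$ as a perturbation of the explicit approximate solution
$$\tilde u_n^J:=\sum_{j=1}^J U^j_n+w_n^J$$
on the domain of influence $\Gamma_n$ and to conclude via the long-time perturbation result Theorem \ref{T:LTPT}. A direct check using $t_{j,n}=0$ for $j\in\JJJ^0$, and the identity $U^j=U^j_L$ for $j\in\JJJ^{in}\cup\JJJ^{out}$, shows that $\vec{\tilde u}_n^J(0)=(u_{0,n},u_{1,n})$, so the mismatch $\delta$ in Theorem \ref{T:LTPT} vanishes. It remains to establish, uniformly in large $n$, a bound $\|\tilde u_n^J\|_{\Lb^5\Lb^{10}(\Gamma_n)}\leq A$ independent of $J$, together with the error smallness
$$\lim_{J\to\infty}\limsup_{n\to\infty}\bigl\|e_n^J\bigr\|_{\Lb^1\Lb^2(\Gamma_n)}=0,\quad e_n^J:=\sum_{j\in\JJJ^0,\,j\leq J}f(U^j_n)-f(\tilde u_n^J),$$
where the sum in $e_n^J$ is restricted to $\JJJ^0$ because the linear profiles $U^j_n$ contribute nothing to $\partial_{tt}-\triangle$.

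The Strichartz bound on $\tilde u_n^J$ is obtained profile by profile and then summed by pseudo-orthogonality. For $j\in\JJJ^0_{>}$ or $j\in\JJJ^0_{=}$, the change of variables $(s,y)=(t/\lambda_{j,n},x/\lambda_{j,n})$ maps $\Gamma_n$ into $\{|y|>R_n/\lambda_{j,n}+|s|\}$, which is contained in the existence domain $\{|y|>|s|-\eps_j\}$ or $\{|y|>|s|+R-\eps_j\}$ for $n$ large (since $R_n/\lambda_{j,n}\to 0$, resp.\ to $R$), so $\|U^j_n\|_{\Lb^5\Lb^{10}(\Gamma_n)}\leq\|U^j\|_{\Lb^5\Lb^{10}}$ on the hypothesised domain, uniformly in $n$. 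For $j\in\JJJ^0_{<}$, $R_n/\lambda_{j,n}\to\infty$, hence the initial data of $U^j$ on $\{|y|>R_n/\lambda_{j,n}\}$ have arbitrarily small energy, and Proposition \ref{P:smalldata} yields a Strichartz bound that actually tends to $0$. For $j\in\JJJ^{in}\cup\JJJ^{out}$, $U^j=U^j_L$ is a radial linear solution globally controlled in $\Lb^5\Lb^{10}(\RR^4)$ by Strichartz; the time translation $|t_{j,n}|/\lambda_{j,n}\to\infty$ combined with finite speed of propagation localises the relevant contribution to a region whose $\Lb^5\Lb^{10}$-norm is controlled and, for outgoing profiles, in fact vanishing. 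The $J$-uniformity comes from the Pythagorean identity \eqref{pythagore}: $\sum_j\|(U^j_0,U^j_1)\|_{\HHb}^2$ converges, so all but finitely many profiles are sub-threshold and satisfy small-data scattering with $\ell^1$-summable fifth powers of their Strichartz norms. Pseudo-orthogonality \eqref{ortho_param} then gives $\|\sum_{j=1}^J U^j_n\|_{\Lb^5\Lb^{10}(\Gamma_n)}^5=\sum_{j=1}^J\|U^j_n\|_{\Lb^5\Lb^{10}(\Gamma_n)}^5+o_n(1)$. Finally $\|w_n^J\|_{\Lb^5\Lb^{10}}\lesssim\|w_n^J\|_{\Lb^4\Lb^{12}}^{4/5}\|w_n^J\|_{\Lb^\infty\Lb^6}^{1/5}\to 0$ by interpolation (with $\theta=4/5$), Sobolev embedding $\Hb\hookrightarrow\Lb^6$ and \eqref{small_w}.

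The error $e_n^J$ is treated by the decomposition
$$e_n^J=\Bigl[\sum_{j=1}^J f(U^j_n)-f\Bigl(\sum_{j=1}^J U^j_n\Bigr)\Bigr]-\sum_{j\in(\JJJ^{in}\cup\JJJ^{out})\cap\{1,\ldots,J\}}f(U^j_n)+\Bigl[f\Bigl(\sum_{j=1}^J U^j_n\Bigr)-f(\tilde u_n^J)\Bigr].$$
The first bracket is controlled by iterating the pointwise inequality $|f(a+b)-f(a)-f(b)|\lesssim |a||b|(|a|^3+|b|^3)$ (consequence of \eqref{ineg_f} and the homogeneity of $f$); after expansion it reduces, for fixed $J$, to mixed Lebesgue products $\|U^j_n U^k_n\|_{\Lb^{5/2}\Lb^5(\Gamma_n)}$ with $j\neq k$, which vanish as $n\to\infty$ by the parameter orthogonality \eqref{ortho_param} through a standard rescaling argument. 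The middle sum goes to zero in $\Lb^1\Lb^2(\Gamma_n)$: for $j\in\JJJ^{out}$, the rescaled time $s=(t-t_{j,n})/\lambda_{j,n}$ stays in $[-t_{j,n}/\lambda_{j,n},\infty)$ with lower end tending to $+\infty$, so $\|U^j_L\|_{\Lb^5\Lb^{10}}$ over that slab vanishes by Strichartz integrability; for $j\in\JJJ^{in}$, the incoming cone $|x|\approx t_{j,n}-t$ intersects $\Gamma_n$ only in a region whose measure shrinks as $n\to\infty$ relative to the scale, so the same conclusion holds by a localised Strichartz estimate. The third bracket is bounded, via \eqref{ineg_f}, by $\|w_n^J\|_{\Lb^5\Lb^{10}(\Gamma_n)}\bigl(\|w_n^J\|_{\Lb^5\Lb^{10}}^4+\|\sum_j U^j_n\|_{\Lb^5\Lb^{10}}^4\bigr)$, which vanishes thanks to the smallness of $w_n^J$ and the uniform bound already obtained.

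The main technical obstacle is the coupling between the two limits $J\to\infty$ and $n\to\infty$: the cross-term and linear-profile estimates are only valid for fixed $J$ when $n\to\infty$, whereas the contribution of the tail profiles hidden inside $w_n^J$ is controlled only as $J\to\infty$ via \eqref{pythagore} and small-data scattering. Once both tasks are completed, Theorem \ref{T:LTPT} applies on $\Gamma_n$ with $\delta=0$ and arbitrarily small $\varepsilon$, producing $u_n$ globally on $\Gamma_n$ with uniformly bounded $\Lb^5\Lb^{10}(\Gamma_n)$-norm and
$$\lim_{J\to\infty}\limsup_{n\to\infty}\Bigl(\|u_n-\tilde u_n^J\|_{\Lb^5\Lb^{10}(\Gamma_n)}+\sup_{t\in\RR}\|\vec u_n(t)-\vec{\tilde u}_n^J(t)\|_{\HHb(\Gamma_{n,\{t\}})}\Bigr)=0,$$
which is exactly \eqref{small_epsnJ}.
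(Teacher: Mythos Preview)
Your proposal is correct and follows essentially the same route as the paper's sketch: build the approximate solution $\tilde u_n^J=v_n^J=\sum_j U_n^j+w_n^J$ (which shares initial data with $u_n$, so $\delta=0$), verify a uniform $\Lb^5\Lb^{10}(\Gamma_n)$ bound profile-by-profile, estimate the error $e_n^J$ via the multilinear decoupling inequality for $f$ and the orthogonality of parameters, and then invoke Theorem~\ref{T:LTPT}. The paper packages the error in the single inequality $\big|f(\sum_j U_n^j+w_n^J)-\sum_j f(U_n^j)-f(w_n^J)\big|\lesssim_J\sum_{j\neq k}|U_n^j|^4|U_n^k|+|w_n^J|\sum_j|U_n^j|^4+|w_n^J|^4\sum_j|U_n^j|$ rather than your three-term splitting, but this is purely bookkeeping.

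One small imprecision: your argument that $\|U^j_{L,n}\|_{\Lb^5\Lb^{10}(\Gamma_n)}\to 0$ for $j\in\JJJ^{out}$ via ``the rescaled time stays in $[-t_{j,n}/\lambda_{j,n},\infty)$'' tacitly restricts to $t\geq 0$, whereas $\Gamma_n$ contains all times; the clean argument (for both $\JJJ^{in}$ and $\JJJ^{out}$, as in Remark~\ref{R:profiles}) is that after rescaling the indicator of $\Gamma_n$ becomes $\indic_{\{|y|>R_n/\lambda_{j,n}+|s+t_{j,n}/\lambda_{j,n}|\}}$, which tends to zero pointwise since $|t_{j,n}|/\lambda_{j,n}\to\infty$, and one concludes by dominated convergence.
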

\begin{rem}
 \label{R:profiles}
 For $j\in \JJJ^{out}\cup \JJJ^{in}$, one usually defines $U^j$ as the solution of \eqref{eq:NLW} such that
 \begin{equation}
 \label{defUj}
  \lim_{t\to \pm\infty} \left\|\big(U^j(t),\partial_tU^j(t)\big)-\big(U^j_L(t),\partial_tU^j_L(t)\big)\right\|_{\HHb}=0,
 \end{equation}
 where the sign below the limit is $+$ if $j\in \JJJ^{out}$ and $-$ if $j\in \JJJ^{in}$. In the context of Theorem \ref{T:approx}, there is no arm in defining instead $U^j=U^j_{L}$, since $U^j_L$ and the nonlinear profile defined by \eqref{defUj} are asymptotically close in the energy space, as $n\to\infty$, on $\Gamma_n$. This follows from
 \begin{equation}
\label{small}
\lim_{n\to\infty}
\left\|U^j_{L,n}\right\|_{\Lb^{5}\Lb^{10}(\Gamma_n)}=0,
 \end{equation}
 which is easily checked by a change of variable and the dominated convergence theorem.
\end{rem}
\begin{proof}[Sketch of proof of Theorem \ref{T:approx}]
We let
$$v_n^J=\sum_{j=1}^J U_n^j(t,x)+w_n^J(t,x),$$
and check that for a large fixed $J$, $u_n$, $v_n^J$ satisfy the assumptions of Theorem \ref{T:LTPT} for $n$ large.

The proof is the same as the proof of the main theorem in \cite{BaGe99} (see section 4 of this article), restricting on $\Gamma_n$ by finite speed of propagation and using also the following observations:
\begin{itemize}
 \item One has the inequality
 \begin{multline*}
  \left|f\left(\sum_{j=1}^JU_n^j+w_n^J\right)-\sum_{j=1}^J f(U_n^j)-f(w_n^J)\right|\\
  \lesssim_J \sum_{\substack{1\leq j\leq J\\ j\neq k}} |U_n^j|^4|U_n^k|+|w_n^J|\sum_{j=1}^J |U_n^j|^4+|w_n^J|^4\sum_{j=1}^J |U_n^j|,
 \end{multline*}
as a consequence of the inequality $\left|f(a+b)-f(a)-f(b)\right|\lesssim |a|^4|b|+|b|^4|a|$, which can be proved using that $f$ is $\mathcal{C}^1$ and homogeneous of degree $5$.
\item If $j\in \JJJ^{out}\cup \JJJ^{in}\cup \JJJ^0_{<}$, then $U_n^j$ is defined on $\Gamma_n$ for large $n$  and
$$\lim_{n\to\infty}\left\|U_n^j\right\|_{\Lb^5\Lb^{10}(\Gamma_n)}=0.$$
\item If $j\in \JJJ^0_{=}\cup \JJJ^0_{>}$, $U_n^j$ is defined on $\Gamma_n$ for large $n$ and
$$\limsup_{n\to\infty}\left\|U_n^j\right\|_{\Lb^5\Lb^{10}(\Gamma_n)}<\infty.$$
\end{itemize}
We omit the details.
\end{proof}

We conclude this section by computing the limit of the asymptotic energy
\begin{equation}
\label{def_En}
E_{n}=\lim_{t\to\infty} \int_{|x|>R_n+t}|\nabla_{t,x}u_n(t,x)|^2dx,
\end{equation}
when $n$ goes to infinity, where $u_n$ and $R$ are as in Theorem \ref{T:approx}. We need the following asymptotic formula for the linear wave equation:
\begin{lem}
\label{L:radiative}
For any radial solution $v_L$ of the linear wave equation in dimension $3$ with initial data in $\HHb$, there exists a unique $g\in \Lb^2(\RR)$ such that
\begin{multline}
 \label{radiative}
 \lim_{t\to\infty}\int_{0}^{\infty} \big|\partial_tv_L(t,r)-\frac{1}{r}g(t-r)\big|^2r^2dr\\
 =\lim_{t\to\infty}\int_{0}^{\infty} \big|\partial_rv_L(t,r)+\frac{1}{r}g(t-r)\big|^2r^2dr=0.
\end{multline}
The map $\vec{v}_L(0)\mapsto g$ is an isometry from $\HHb$ to $\Lb^2(\RR)$. We call $g$ the \emph{radiation} associated to $v_L$.
\end{lem}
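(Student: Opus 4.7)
The plan is to reduce the $3$D radial linear wave equation to a $1$D wave equation on the half-line via the substitution $w(t,r):=rv_L(t,r)$. Then $w$ solves $(\partial_{tt}-\partial_{rr})w=0$ on $(0,\infty)\times\RR$ with Dirichlet condition $w(t,0)=0$, and d'Alembert's formula together with this boundary condition forces the representation
\[
w(t,r)=\alpha(t-r)-\alpha(t+r)
\]
for a unique function $\alpha:\RR\to\RR^m$ (modulo an additive constant), determined from the initial data by
\[
\alpha'(r)=-\tfrac12\bigl(rv_1(r)+\partial_r(rv_0)(r)\bigr),\quad\alpha'(-r)=\tfrac12\bigl(rv_1(r)-\partial_r(rv_0)(r)\bigr),\quad r>0.
\]
Squaring and integrating these formulas yields
\[
2\|\alpha'\|_{L^2(\RR)}^2=\int_0^\infty\bigl(r^2|v_1|^2+|\partial_r(rv_0)|^2\bigr)\,dr=\int_0^\infty r^2\bigl(|v_1|^2+|\partial_rv_0|^2\bigr)\,dr=\|\vec v_L(0)\|_{\HHb}^2,
\]
where the middle identity uses $\int_0^\infty|\partial_r(rv_0)|^2\,dr=\int_0^\infty r^2|\partial_rv_0|^2\,dr$, whose boundary terms vanish for $v_0\in\HR$ by a standard cut-off argument. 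Setting $g$ equal to the appropriate scalar multiple of $\alpha'$ realizes the isometry, and surjectivity is immediate from the explicit inversion $(v_0,v_1)\leftrightarrow\alpha$.

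For the asymptotic formulas I would first treat smooth, compactly supported radial data. If the data is supported in $\{|x|\le R\}$ then $\alpha$ is supported in $[-R,R]$. Differentiating $v_L=w/r$ and using the representation, one finds (for the normalized $g$)
\[
\partial_tv_L-\frac{g(t-r)}{r}=-\frac{c\,\alpha'(t+r)}{r},\qquad \partial_rv_L+\frac{g(t-r)}{r}=-\frac{c\,\alpha'(t+r)}{r}-\frac{\alpha(t-r)-\alpha(t+r)}{r^2},
\]
the subtraction/addition of $g(t-r)/r$ being engineered to cancel the outgoing contribution $\alpha'(t-r)/r$. For $t>R$ the ``incoming'' term $\alpha'(t+r)$ vanishes identically, and the remainder $(\alpha(t-r)-\alpha(t+r))/r^2$ is supported in $r\in[t-R,t+R]$, where $r^{-2}\lesssim t^{-2}$, so its $L^2_{r^2\,dr}$-norm is $O(t^{-1})$. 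Thus \eqref{radiative} holds on this dense class.

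To extend to arbitrary $\vec v_L(0)\in\HHb$ I would approximate by smooth compactly supported radial data; the isometry translates $\HHb$-smallness of the difference into uniform-in-$t$ smallness of the error in both $L^2$ quantities in \eqref{radiative}, so $g$ defined as an $\Lb^2(\RR)$-limit satisfies \eqref{radiative}. Uniqueness of $g$ is immediate: any competitor $h\in\Lb^2(\RR)$ would force $\int_0^\infty|(g-h)(t-r)|^2\,dr=\int_{-\infty}^t|(g-h)(s)|^2\,ds\to 0$ as $t\to\infty$, whence $g=h$ a.e. The only point requiring genuine care is pinning down the multiplicative constant so that the same $g$ satisfies both the isometry identity and the asymptotic formulas; once this normalization is fixed, the proof is a direct calculation based on the d'Alembert representation.
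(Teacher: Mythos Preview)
Your approach is correct and is the standard d'Alembert-based argument; the paper itself does not give a proof but cites \cite[Appendix A]{DuKeMe19}, where essentially this computation is carried out. One minor point worth flagging: with the natural choice $g=\alpha'$ the asymptotic formulas hold exactly as stated, but the identity you derive is $2\|\alpha'\|_{L^2(\RR)}^2=\int_0^\infty r^2(|\partial_rv_0|^2+|v_1|^2)\,dr$, so the word ``isometry'' in the lemma should be read up to a fixed universal constant (or with a suitably normalized $L^2$ norm); your caveat about the multiplicative constant is therefore well placed, and no single scalar multiple of $\alpha'$ will make both the asymptotic formula and a literal isometry hold simultaneously.
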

The proof is given in e.g. \cite[Appendix A]{DuKeMe19}. The fact that $v_L$ is vector valued does not affect the proof, since one can works separately on each component of $v_L$.

For $j\in \JJJ^0_>$, we observe that the assumptions of Theorem \ref{T:approx} imply the existence of a solution $V^j_L$ of the linear wave equation \eqref{eq:LW} such that
\begin{equation}
 \label{defVjL}
 \lim_{t\to\infty}\int_{t}^{\infty} \left|\partial_{t,r} (U^j-V^{j}_L)\right|^2r^2dr=0.
\end{equation}
For $j\in \JJJ^0_=$, there exists a solution $V^j_L$ of \eqref{eq:LW} such that
\begin{equation}
 \label{defVjLbis}
 \lim_{t\to\infty}\int_{R-\eps+t}^{\infty} \left|\partial_{t,r} (U^j-V^{j}_L)\right|^2r^2dr=0,
\end{equation}
where $R$ is defined by \eqref{defR}. For $j\in \JJJ^{out}$, we let $V^j_L=U^j=U^j_L$. In all cases, we define $g^j\in L^{2}(\RR)$ as the radiation associated to $V^j_L$, i.e. such that \eqref{radiative} holds with $v_L=V^j_L$ and $g=g^j$. Letting $G^J_n$ be the radiation associated to $w^J_n$, we see by \eqref{defrJn}, \eqref{small_epsnJ} and the definitions of $V^j_L$, $g^j$ and $G^J_n$, that
\begin{equation*}
E_n=\int_{R_n}^{+\infty} \bigg|\sum_{\substack{1\leq j\leq J\\ j\in \JJJ^0_=\cup \JJJ^0_{>}\cup \JJJ^{out}}} \frac{1}{\lambda_{j,n}^{1/2}}g^{j}\left(\frac{-t_{j,n}-\eta}{\lambda_{j,n}}\right)+G_n^J(-\eta)\bigg|^2d\eta+o^J_n,
\end{equation*}
where
$$\lim_{J\to\infty}\limsup_{n\to\infty}\left|o^J_n\right|=0.$$
We let
\begin{equation}
\label{defrho_j}
 \rho_j=\lim_{n\to\infty}\frac{-t_{j,n}-R_n}{\lambda_{j,n}}.
\end{equation}
Note that $\rho_j=-\infty$ if $j\in \JJJ^0_{<}\cup \JJJ^{in}$, $\rho_j=-R$ if $j\in \JJJ^0_=$ and $\rho_j=0$ if $j\in \JJJ^0_{>}$. If $j\in \JJJ^{out}$, we can always assume that $\rho_j$ exists in $\RR\cup\{\pm\infty\}$ after extraction of a subsequence in $n$. Using the orthogonality of the profiles, and the property \eqref{weak_CV_wJ}, one obtains
\begin{lem}
\label{L:channel}
Let $u_n$ and $R$ be as in Theorem \ref{T:approx}, and $E_n$ be defined by \eqref{def_En}.
With the assumptions and notations above,
\begin{equation}
 \label{limE_n}
 \lim_{n\to\infty} E_n=\sum_{1\leq j\leq J}\int_{-\infty}^{\rho_j}|g^j(\eta)|^2d\eta+\lim_{n\to\infty} \int_{-\infty}^{-R_n}|G_n^J(\eta)|^2d\eta+o(1),\quad J\to\infty.
\end{equation}
\end{lem}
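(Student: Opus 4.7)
The starting point is the expansion displayed just above the lemma statement, namely
\[
E_n=\int_{R_n}^{+\infty}\bigg|\sum_{\substack{1\le j\le J\\ j\in\JJJ^0_=\cup\JJJ^0_{>}\cup\JJJ^{out}}}\frac{1}{\lambda_{j,n}^{1/2}}g^j\Big(\frac{-t_{j,n}-\eta}{\lambda_{j,n}}\Big)+G_n^J(-\eta)\bigg|^2d\eta+o_n^J,
\]
with $\lim_{J\to\infty}\limsup_{n\to\infty}|o_n^J|=0$. The plan is to expand the square into diagonal and cross-terms, identify the limit of each diagonal piece, and show that every cross-term vanishes as $n\to\infty$.

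For a diagonal profile term, the substitution $\sigma=(-t_{j,n}-\eta)/\lambda_{j,n}$ yields
\[
\int_{R_n}^{+\infty}\frac{1}{\lambda_{j,n}}\bigl|g^j\bigl(\tfrac{-t_{j,n}-\eta}{\lambda_{j,n}}\bigr)\bigr|^2d\eta=\int_{-\infty}^{(-t_{j,n}-R_n)/\lambda_{j,n}}|g^j(\sigma)|^2d\sigma,
\]
whose upper limit converges to $\rho_j$ by \eqref{defrho_j}. For $j\in\JJJ^0_<\cup\JJJ^{in}$ the value $\rho_j=-\infty$ makes the integral zero, so the sum in the statement may harmlessly range over all $1\le j\le J$. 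The diagonal contribution of $G_n^J$ is handled by the substitution $\eta'=-\eta$, producing exactly $\int_{-\infty}^{-R_n}|G_n^J(\eta')|^2d\eta'$.

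The crux of the argument is the vanishing of the cross-terms. For two distinct profiles $j\neq k$, the pseudo-orthogonality \eqref{ortho_param} of the parameters implies that, after a density reduction to continuous compactly supported $g^j$, $g^k$ and a common change of variable, the cross-integral becomes the $L^2$ pairing of two rescaled bumps whose scales have ratio diverging to $0$ or $\infty$, or whose centers are separated by more than any multiple of the scale; either way the cross-integral tends to zero. For a mixed term between a profile $g^j$ and the remainder radiation $G_n^J$, the input is that the radiation map of Lemma \ref{L:radiative} is an isometry from $\HHb$ to $\Lb^2(\RR)$ which is equivariant under the dilation-translation action. Hence the weak convergence to $0$ of $\bigl(\lambda_{j,n}^{1/2}w_n^J(t_{j,n},\lambda_{j,n}\cdot),\lambda_{j,n}^{3/2}\partial_tw_n^J(t_{j,n},\lambda_{j,n}\cdot)\bigr)$ in $\HHb$ recorded in \eqref{weak_CV_wJ} transfers to weak convergence to $0$ in $L^2(\RR)$ of $\lambda_{j,n}^{1/2}G_n^J(t_{j,n}+\lambda_{j,n}\,\cdot)$, and testing it against the fixed $g^j$ gives the desired vanishing.

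The main obstacle is to make the last equivariance precise: one must check that the rescaling used in \eqref{weak_CV_wJ} corresponds exactly to the combined dilation and translation on the radiation side, i.e.\ that the radiation of $\lambda^{-1/2}v_L\bigl((t-\tau)/\lambda,\cdot/\lambda\bigr)$ is $\lambda^{-1/2}g\bigl((\cdot-\tau)/\lambda\bigr)$ when $g$ is the radiation of $v_L$. This is a direct computation from the defining asymptotics in Lemma \ref{L:radiative}. Once it is in place, combining the three contributions above yields the stated identity, the residual $+o(1)$ as $J\to\infty$ being nothing but the error $o_n^J$ carried over from the starting expansion.
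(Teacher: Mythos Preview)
Your argument is correct and follows exactly the approach the paper indicates in the sentence preceding the lemma: expand the square, compute the diagonal terms by the change of variable $\sigma=(-t_{j,n}-\eta)/\lambda_{j,n}$, kill the profile--profile cross terms using the pseudo-orthogonality \eqref{ortho_param}, and kill the profile--remainder cross terms by transporting the weak convergence \eqref{weak_CV_wJ} through the unitary radiation map of Lemma~\ref{L:radiative}. The equivariance computation you isolate (radiation of $\lambda^{-1/2}v_L((t-\tau)/\lambda,\cdot/\lambda)$ equals $\lambda^{-1/2}g((\cdot-\tau)/\lambda)$) is precisely what makes this transfer work, and the paper's one-line justification ``using the orthogonality of the profiles, and the property \eqref{weak_CV_wJ}'' is nothing more than a pointer to the steps you have written out.
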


A similar property holds for the past asymptotic exterior energy:
\begin{equation}
E_{n}^-=\lim_{t\to-\infty} \int_{|x|>R_n+|t|}|\nabla_{t,x}u_n(t,x)|^2dx.
\end{equation}
In this case one should exchange the roles of the outgoing profiles ($j\in \JJJ^{out}$) and the incoming ones ($j\in \JJJ^{in}$).

\section{Radial stationary solutions of wave system}
\label{S:stationary}

In this section, we describe some properties of the radial stationary  solutions of \eqref{eq:NLW}, i.e. the radial $\Hb$ solutions of
\begin{equation}
\begin{array}{l}
- \triangle u = f(u),
\end{array}
\label{Eqn:SemilinearEllipt}
\end{equation}
with $u: \mathbb{R}^{3} \rightarrow \mathbb{R}^{m}$, and $f$ satisfying \eqref{H1}. We start by proving that for all $\theta\in \RR^m\setminus \{0\}$, there exists a unique solution of \eqref{Eqn:SemilinearEllipt}, defined for large $r$ such that $\lim_{r\to\infty} rZ_{\theta}(r)=\theta$ (see Subsection \ref{sub:ell_large_r}). In Subsection \ref{sub:rigidity} we obtain as a consequence of a rigidity theorem for equation \eqref{eq:NLW}, proved in the next section, that all $\Hb$, radial solutions of \eqref{Eqn:SemilinearEllipt} are of the form $Z_{\theta}$. As a consequence, we prove that the $\Hb$ solutions of \eqref{Eqn:SemilinearEllipt} on $\RR^3$ are $\mathcal{C}^4$. In subsection \ref{sub:potential}, we consider nonlinearities $f$ that satisfies \eqref{H0}, i.e. that are of the form $\nabla_u F(u)$ for an homogeneous function $F$. We give general properties of the set $\ZZZ$ of nonzero solutions of \eqref{Eqn:SemilinearEllipt}. We then prove that $\ZZZ$ is empty if and only if $F$ is nonnegative, and characterize the ground states elements of $\ZZZ$. Finally, in Subsection \ref{sub:examples} we give some examples.

\subsection{Asymptotic properties of radial, stationary solutions of wave system}
\label{sub:ell_large_r}
In the proposition below we list some properties of the radial solutions of the elliptic system \eqref{Eqn:SemilinearEllipt} that are defined, and in $\Hb$, for large $r$, viewing \eqref{Eqn:SemilinearEllipt} as an equation with an initial data at $r=\infty$. This point of view (initiated in \cite{DuKeMe14}, see Proposition 3.2), is necessary to implement the method of channels of energy in the proof of the rigidity theorem (Theorem \ref{T:rigidity}).
\begin{prop}
\label{P:Ztheta}
Let $\theta \in \mathbb{R}^{m}$. Then the following properties hold:
\begin{enumerate}
\item there exists $R_{\theta} \geq 0 $, $C:= C(\theta) > 0$,  and a $\mathcal{C}^{2}_{rad}-$ function $Z_{\theta}$ of (\ref{Eqn:SemilinearEllipt}) on $|x| > R_{\theta}$ such that
\begin{equation}
\begin{array}{l}
r > R_{\theta}: \;  \left| Z_{\theta}(r) - \frac{\theta}{r} \right| \leq \frac{C(\theta)}{r^{3}}, \; \text{and} \;  \left| Z^{'}_{\theta}(r) + \frac{\theta}{r^{2}} \right| \leq  \frac{C(\theta)}{r^{4}}
\end{array}
\label{Eqn:EstZtheta}
\end{equation}
Moreover we can choose $R_{\theta}$ such that either $(a)$ or $(b)$ or $(c)$ holds with
\begin{itemize}
\item[$(a)$]:  $R_{\theta} > 0$ and $\limsup_{r \rightarrow R_{\theta}^{+}}  \left| Z_{\theta}(r) \right| = \infty $


\item [$(b)$]:  $R_{\theta} = 0$ and $ Z_{\theta} \notin L^{6} \left( \mathbb{R}^{3} \right) $


\item [$(c)$]: $R_{\theta} = 0$, $Z_{\theta} \in \dot{H}^{1}(\mathbb{R}^{3})$, and $Z_{\theta}$ satisfies \eqref{Eqn:SemilinearEllipt} in the sense of distribution on $\RR^3$.
\end{itemize}

\medskip


\item Let $R > 0$ and $u$ be a solution in $\Hb \left( |x| > R \right)$ of (\ref{Eqn:SemilinearEllipt}) such that $\lim \limits_{r \rightarrow \infty }
r u(r) = \theta $ . Then $ u(x) = Z_{\theta}(x) $ for
$|x| > \max \left( R, R_{\theta} \right)$

\end{enumerate}

\label{Prop:CharacSolStatAsymp}
\end{prop}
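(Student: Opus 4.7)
The plan is to reduce the radial elliptic equation $-\triangle u = f(u)$ to a second-order ODE that is well-behaved at infinity via the substitution $v(r) = r u(r)$. Writing $\triangle u = u'' + (2/r)u'$ and using the homogeneity $f(v/r) = r^{-5} f(v)$, one checks that the equation is equivalent to $v'' = -r^{-4} f(v)$. The condition $\lim_{r\to\infty} ru(r) = \theta$ corresponds to prescribing $v(\infty) = \theta$; together with the natural decay $v'(\infty) = 0$, integrating twice from $r$ to $\infty$ leads to the fixed point equation
\begin{equation*}
v(r) = \theta + \int_r^\infty (\rho - r)\,\rho^{-4}\, f(v(\rho))\, d\rho.
\end{equation*}
Since $\int_r^\infty (\rho - r)\rho^{-4}\, d\rho = 1/(6r^2)$, any bounded $v$ near infinity automatically satisfies $v(r) - \theta = O(r^{-2})$, which corresponds exactly to the required bound $|Z_\theta(r) - \theta/r| \lesssim r^{-3}$ in \eqref{Eqn:EstZtheta}.

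For the existence part of (1), I would run a contraction argument for the integral operator above on a small $L^\infty([R_*,\infty),\RR^m)$-ball centered at the constant function $\theta$, for $R_*$ large enough depending on $|\theta|$. The hypothesis $f \in \mathcal{C}^2$ makes $f$ Lipschitz on bounded sets, while the $1/r^2$ decay of the integrated kernel makes the Lipschitz constant of the operator as small as needed. Differentiating the integral equation yields $v'(r) = -\int_r^\infty \rho^{-4} f(v(\rho))\,d\rho = O(r^{-3})$, which combined with $v = \theta + O(r^{-2})$ produces the second asymptotic estimate $|Z_\theta'(r) + \theta/r^2| \lesssim r^{-4}$.

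Once $Z_\theta$ is constructed on $[R_*,\infty)$, I would extend it backwards in $r$ as a solution of the ODE, which has smooth coefficients on $(0,\infty)\times\RR^m$ thanks to $f \in \mathcal{C}^2$. This produces a maximal interval of existence $(R_\theta,\infty)$, and standard ODE theory gives the trichotomy: either $R_\theta > 0$, in which case $|Z_\theta|$ must blow up as $r \downarrow R_\theta$ (case (a)); or $R_\theta = 0$ and $Z_\theta \notin L^6(\RR^3)$ (case (b)); or $R_\theta = 0$ and $Z_\theta \in L^6(\RR^3)$. In the last subcase, $f(Z_\theta) \in L^{6/5}_{loc}$, so classical elliptic regularity away from the origin combined with the removability of isolated singularities for $L^6$ solutions (the Green's function $1/r$ is not in $L^6_{loc}$, so no $\delta$-source at $0$ is possible) shows that $Z_\theta$ solves \eqref{Eqn:SemilinearEllipt} distributionally on $\RR^3$; the outer asymptotics together with the $L^6$ control inside then give $Z_\theta \in \Hb$, i.e.\ case (c).

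For (2), the hypothesis $r u(r) \to \theta$ together with a direct integration of the radial ODE ensures that $v(r) := ru(r)$ satisfies the same integral equation as $Z_\theta$ for $r$ large enough; by the uniqueness of the contraction fixed point, $u \equiv Z_\theta$ on some half-line $[\tilde R,\infty)$. Cauchy--Lipschitz uniqueness for the ODE then propagates this identity to the common domain $(\max(R,R_\theta),\infty)$. The main technical obstacle in the whole argument is the third alternative (c) in the trichotomy: upgrading a mere $L^6$ solution on the punctured space to a genuine $\Hb$ distributional solution on all of $\RR^3$, which requires the removable-singularity input sketched above.
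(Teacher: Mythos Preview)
Your approach is essentially the same as the paper's: the substitution $v=ru$ is exactly the weighted sup-norm $\sup_r r|u(r)|$ the paper uses for its contraction, and the ODE extension plus trichotomy match. The two places the paper is more explicit are case~(a), where it observes that if $|Z_\theta|$ stayed bounded near $R_\theta$ then integrating $(r^2 Z_\theta')'=-r^2 f(Z_\theta)$ would keep $Z_\theta'$ bounded as well (contradicting the ODE blow-up criterion), and case~(c), where instead of invoking a removable-singularity principle it tests the equation against $\varphi(1-\psi_\eps)$ with $\psi_\eps(x)=\psi(x/\eps)$ and uses $\|\triangle\psi_\eps\|_{L^{6/5}},\|\nabla\psi_\eps\|_{L^{6/5}}\to 0$ together with $Z_\theta\in \Lb^6$ to pass to the limit, then obtains $Z_\theta\in\Hb$ via the interpolation $\|\nabla Z_\theta\|_{L^2}\lesssim \|\triangle Z_\theta\|_{L^{6/5}}^{1/2}\|Z_\theta\|_{L^6}^{1/2}$.
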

\begin{proof}
Since $f$ is homogeneous of order $5$, the differential of $f$ is homogeneous of order $4$. Hence the fundamental theorem of calculus applied
to $t \rightarrow f \left( u + t(v-u) \right)$ yields
\begin{equation}
\begin{array}{l}
| f(u) - f(v) | \leq C \left( |u|^{4} + |v|^{4} \right) |u - v| \cdot
\end{array}
\label{Eqn:fufv}
\end{equation}
Let $\theta \in \mathbb{R}^{m}$, $ R \gg 1$, and $X_{R} := \left\{ u \in \mathcal{C}^{0} \left( [R, \infty), \mathbb{R}^{m} \right), \, \| u \|_{R} := \sup \limits_{r \geq R} r |u(r)| \right\}$. \\
Let $\overline{\mathcal{B}} \left( O, 2 |\theta| \right) $ be the closed ball of $X_{R}$ centered at $O$ and with radius $2 |\theta|$. Let
$ u \in \overline{\mathcal{B}} \left( O,2 |\theta| \right) $. We define
\begin{equation}
\begin{array}{l}
\Phi_{\theta}(u)(r):= \frac{\theta}{r}  - \int_{r}^{\infty} s^{-2} \int_{s}^{\infty} \rho^{2} f (u(\rho)) \, d \rho \, ds \cdot
\end{array}
\nonumber
\end{equation}
We claim that $\Phi_{\theta}$ is a contraction. Indeed by direct computation and the claim above we see that for $r \geq R$ and $(u,v) \in X_{R} \times X_{R} $
\begin{equation}
\begin{array}{l}
\left| \Phi_{\theta}(u)(r) - \frac{\theta}{r} \right| \leq \frac{C}{r^{3}} \| u \|_{R}^{5}, \; \text{and} \\
\left| \Phi_{\theta}(u)(r) - \Phi_{\theta}(v)(r) \right| \leq \frac{C}{r^{3}} \| u - v \|_{R} \left( \| u \|^{4}_{R} + \| v \|^{4}_{R} \right)
\end{array}
\nonumber
\end{equation}
So we have a constructed a unique solution $u \in \overline{\mathcal{B}} \left( O, 2 |\theta| \right) $ that satisfies $u(r) = \Phi_{\theta}(u)(r) $ for
$r \geq  R$. Moreover the first estimate of (\ref{Eqn:EstZtheta}) holds. Now by applying twice the fundamental theorem of calculus to $\Phi_{\theta}(u)$ , we see that $u \in \mathcal{C}^{2}([R, \infty))$. Moreover
\begin{equation}
\begin{array}{l}
Z^{'}_{\theta}(r) = - \frac{\theta}{r^{2}} + \frac{1}{r^{2}} \int_{r}^{\infty} \rho^{2} f(u(\rho)) \; d \rho
\end{array}
\end{equation}
Hence the second estimate of (\ref{Eqn:EstZtheta}) holds.

Next we consider a radial solution $u \in \Hb (|x| > R)$ of (\ref{Eqn:SemilinearEllipt}) such that
\begin{equation}
\label{lim_r_ur}
 \lim_{r\to\infty} ru(r)=\theta \in \RR^{m}.
\end{equation}
Clearly, in view of the radial Sobolev embedding
$\Hb (|x| > R) \hookrightarrow \mathcal{C}^{0}(|x| > R)$, (\ref{Eqn:SemilinearEllipt}), elementary properties of Sobolev spaces (such as the product rule and the composition rule), and the formula $ \triangle f (x) = \frac{(r  f)^{''}}{r} $, we see that $u \in \mathcal{C}^{2} (|x| > R)$. Hence integrating once
(\ref{Eqn:SemilinearEllipt}) between $r_{2} \geq r_{1} \gg 1$, using \eqref{lim_r_ur} and $ \triangle f (x) = \frac{(r^{2}  f^{'})^{'}}{r^{2}} $, we see from the Cauchy criterion for integrals that $r^{2} u^{'}(r)$ has a limit as $r \rightarrow \infty$; moreover
it is clear this limit can only be $- \theta$. Hence integrating twice (\ref{Eqn:SemilinearEllipt}) we see that
\begin{equation}
\begin{array}{l}
u(r) = \frac{\theta}{r}  - \int_{r}^{\infty} s^{-2} \int_{s}^{\infty} \rho^{2} f (u(\rho)) \, d \rho \, ds \cdot
\end{array}
\nonumber
\end{equation}
Now from the formula above and $ \lim \limits_{r \rightarrow \infty} ru(r) = \theta $ we see that $u \in \bar{B} \left( O, 2 |\theta| \right)$ for $R \gg 1$. Hence
the application of the fixed point theorem shows that $u=Z_{\theta}$.

We then consider the maximal interval $I_{\theta} := (R_{\theta}, \infty) $ of the solution $Z_{\theta}$ of the ordinary differential equation $z''+\frac{2}{r}z'+f(z)=0$. Thus $Z_{\theta}$ is well defined, and $\mathcal{C}^2$, for $r>R_{\theta}$. In particular it is in $\Hb (|x| >R)$ for all $R>R_{\theta}$. If $R_{\theta} > 0$, the standard ODE blowup criterion ensures that
\begin{equation}
\label{bupcriterion}
 \lim \limits_{r \rightarrow R_{\theta}^{+}} |Z_{\theta}(r)| + |Z_{\theta}^{'}(r)| = +\infty.
\end{equation}
Assume now that $\limsup_{r \rightarrow R_{\theta}^{+}}
|u(r)| < \infty $. Then this means that there exists $M < \infty$ such that $|u(r)| \leq M$. But then from
$ \triangle f (x) = \frac{(r^{2}  \partial_{r}f)^{'}}{r^{2}} $ and (\ref{Eqn:SemilinearEllipt}) we see that for $r,r_{0} > R_{\theta}$,
$r u^{'}(r) - r_{0} u^{'}(r_{0}) = - \int_{r}^{r_{0}} \rho^{2} f(u(\rho)) \; d \rho $. Hence, using also that $f$ is homogeneous of degree $5$ we see from the Cauchy criterion
that the integral (and hence $r u^{'}(r)$) converges as $ r \rightarrow R_{\theta}^{+}$. This clearly contradicts  \eqref{bupcriterion}. Thus in the case where $R_{\theta}>0$, (a) holds.

Assume now that $R_{\theta} =0$. Then if $Z_{\theta} \notin \Lb^{6}(\RR^3)$, we are in case (b). Assuming $Z_{\theta}\in \Lb^6(\RR^3)$, we must prove that we are in case (c). We first prove that $Z_{\theta}$ satisfies the equation \eqref{Eqn:SemilinearEllipt} in the sense of distribution on $\RR^3$.

We will denote $u=Z_{\theta}$ to lighten notations. By standard ODE theory, $u$ is $\mathcal{C}^2$ for $r>R_{\theta}$. It thus satisfies \eqref{Eqn:SemilinearEllipt} in the classical sense on $\RR^3\setminus \{0\}$. We let $\varphi\in C_0^{\infty}(\RR^3)$, and consider $\psi\in C_0^{\infty}(\RR^3)$ such that $\psi(x)=1$ for $|x|\leq 1$ and $\psi(x)=0$ for $|x|>2$. We let $\psi_{\eps}(x)=\psi(x/\eps)$. Then, since $u$ is a solution of \eqref{Eqn:SemilinearEllipt} for $|x|>0$,
$$-\int \triangle u \left( 1-\psi_{\eps}\right)\varphi=\int f(u) \left( 1-\psi_{\eps}\right)\varphi.$$
Integrating by parts we obtain
$$ \int u \varphi\triangle \psi_{\eps} +2\int u\nabla \psi _{\eps}\cdot \nabla \varphi-\int u\left(1-\psi_{\eps}\right)\triangle\varphi=\int f(u) (1-\psi_{\eps})\varphi.$$
One checks easily that
$$\lim_{\eps\to 0}\|\triangle \psi_{\eps}\|_{L^{6/5}}=\lim_{\eps\to 0}\|\nabla \psi_{\eps}\|_{L^{6/5}}=\lim_{\eps\to 0}\|\psi_{\eps}\|_{L^{6}\cap L^{6/5}}=0.$$
Using that $u\in \Lb^6$ (and thus $f(u)\in \Lb^{6/5}$), we deduce
\begin{equation}
- \int_{\mathbb{R}^{3}} Z_{\theta} \triangle \varphi \,dx = - \int_{\mathbb{R}^{3}}  f \left( Z_{\theta} \right) \varphi \,dx,
\nonumber
\end{equation}
which proves that $u=Z_{\theta}$ satisfies \eqref{Eqn:SemilinearEllipt} in the sense of distribution on $\RR^3$.
Hence $\triangle Z_{\theta} \in L^{\frac{6}{5}}$. Now by the interpolation inequality
$ \| \nabla Z_{\theta} \|_{L^{2}} \lesssim \| \triangle Z_{\theta} \|^{\frac{1}{2}}_{L^{\frac{6}{5}}} \| Z_{\theta} \|^{\frac{1}{2}}_{L^{6}} $ we conclude that $Z_{\theta}\in \Hb$.
\end{proof}
\begin{exemple}
 \label{Ex:W}
 We recall that $W$ (defined in \eqref{def_W}) is solution to the equation $-\triangle W=W^5$ on $\RR^3$, and that it is (up to scaling and multiplication by $\pm 1$) the only radial solution of this equation. Also, $W$ is the maximizer for the critical Sobolev inequality
 \begin{equation}
  \label{CriticalSobolev}
  \forall \varphi\in \dot{H}^1(\RR^3), \quad \left(\int |W|^6\right)^2\int |\varphi|^6\leq \left(\int |\nabla \varphi|^2\right)^3,
 \end{equation}
 (see \cite{Aubin76, Talenti76})  with equality if and only if $\varphi(x)=\lambda W(\mu x)$, where $\lambda\in \RR$, $\mu>0$. It is natural to look for solutions of \eqref{Eqn:SemilinearEllipt} of the form $\zeta W_{(\lambda)}$, $\zeta\in \RR^m$. It is easy to check that $\zeta W_{(\lambda)}$ is a solution if and only if $f(\zeta)=\zeta$. In this case, by the uniqueness in Proposition \ref{P:Ztheta} and the asymptotics of $W$, we have $\zeta W_{(\lambda)}=Z_{\theta}$, where $\theta= \sqrt{3\lambda}\zeta$. Let us mention that this type of solutions always exist when $f$ satisfies \eqref{H0} and $\ZZZ$ is not empty, see Proposition \ref{P:GroundState}.
 In the case of the focusing Euclidean nonlinearity
$$ f(u)=|u|^4 u,\quad |u|^2=\sum_{j=1}^m u_j^2,$$
all $W_{(\lambda)} \omega $, $\omega \in \Sb^{m-1}$, $\lambda>0$, are elements of $\ZZZ$. We will see in the next section that these are the only elements of $\ZZZ$ in this case (see Exemple \ref{Ex:Euclidean}).
\end{exemple}

\subsection{Rigidity theorem and consequences}
\label{sub:rigidity}
In the next section we will prove the following rigidity theorem, which is one of the crucial step for the proof of Theorems \ref{T:sequence_resolution} and Theorems \ref{T:continuous_resolution}.
\begin{thm}

\label{T:rigidity}

Let $R \geq 0$ and $u$ be a radial solution of $ \partial_{tt} u - \triangle u = f(u) $  on $\{|x|>R+|t|\}$ with radial initial data $(u_{0},u_{1}) \in \HHb( |x|> R)$. Assume that
\begin{equation}
\label{nonradiative}
 \sum \limits_{ \iota \in \{ + , - \} } \lim \limits_{ t \rightarrow \iota \infty } \int_{|x| \geq R + |t|}
 | \nabla_{x,t} u (t,x)|^{2} \, dx  = 0  \cdot
\end{equation}
Then there exists $\theta \in \mathbb{R}^{m}$ such that
\begin{equation}
\begin{array}{l}
\left( u(t,x), \partial_{t} u (t,x) \right) :=  \left( Z_{\theta}(|x|),0 \right), \quad |x|>R.
\end{array}
\nonumber
\end{equation}
\end{thm}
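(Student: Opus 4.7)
The plan is to adapt the channels of energy method from \cite{DuKeMe13} to the vector-valued setting, combined with the asymptotic ODE theory of Proposition~\ref{P:Ztheta}. The starting point is the \emph{linear exterior energy estimate} for radial data in dimension $3$: for any $(v_0,v_1)\in \HHR$ and $v_L(t)=S_L(t)(v_0,v_1)$,
\[
\|\pi_R^\perp(v_0,v_1)\|_{\HHR}^2 \;\leq\; C\sum_{\pm}\lim_{t\to\pm\infty}\int_{|x|>R+|t|}|\nabla_{t,x}v_L|^2\,dx,
\]
where $\pi_R^\perp$ is the orthogonal projection onto the complement of the $m$-dimensional ``non-radiative'' subspace $\mathcal{N}_R=\{(\theta/r,0):\theta\in\RR^m\}\subset\HHR$. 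The scalar version ($m=1$) is classical in this setting; the vector version follows by applying it componentwise.

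Next I would \emph{extract the parameter} $\theta$. Fix $R_1\geq R$ large; since $(u_0,u_1)\in\HHR$, $\|(u_0,u_1)\|_{\HHR_1}$ is as small as desired. By Proposition~\ref{P:smalldata} applied to the domain of influence $\Gamma^{R_1}=\{|x|>R_1+|t|\}$, the nonlinear solution $u$ satisfies
\[
\sup_{t}\|\vec{u}(t)-\vec{u}_L(t)\|_{\HHb(\Gamma^{R_1}_{\{t\}})}\;\lesssim\;\|(u_0,u_1)\|^{5}_{\HHR_1},
\]
where $\vec{u}_L(t)=\vec{S}_L(t)(u_0,u_1)|_{r>R_1}$. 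Combining this with the non-radiative hypothesis \eqref{nonradiative} and the linear exterior energy estimate yields
\[
\|\pi_{R_1}^\perp(u_0,u_1)\|_{\HHR_1}^2\;\lesssim\;\|(u_0,u_1)\|_{\HHR_1}^{10}.
\]
Writing $\pi_{R_1}(u_0,u_1)=(\theta_{R_1}/r,0)$ and considering the residual $(\tilde u_0,\tilde u_1):=(u_0-\theta_{R_1}/r,u_1)$ on $r>R_1$, one checks that $\theta_{R_1}/r$ is a stationary solution of the free wave equation so that on $\Gamma^{R_1}$ the function $u-\theta_{R_1}/r$ solves a nonlinear wave equation whose source is $f(u)$ (still controlled by $|u|^5$). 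Iterating the previous argument on the residual, which has strictly better decay at infinity, and using the superlinear gain $\|\pi^\perp\|^2\lesssim\|\cdot\|^{10}$ to run a bootstrap, one concludes $\pi_{R_1}^\perp(u_0,u_1)=0$ for $R_1$ large; hence $(u_0,u_1)=(\theta/r,0)$ on $r>R_1$ for some $\theta\in\RR^m$.

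Finally, Proposition~\ref{P:Ztheta}(ii) identifies $u_0=Z_\theta$ on $r>R_1$. Since $u_1=0$ and $Z_\theta$ is a stationary solution of \eqref{eq:NLW} on $r>R_1$, finite speed of propagation and uniqueness give $\vec u(t,x)=(Z_\theta(|x|),0)$ on the whole cone $\Gamma^{R_1}$. To descend to $r>R$, set
\[
R_*:=\inf\{R_1\geq R:\vec u(t,x)=(Z_\theta,0)\text{ on }\{|x|>R_1+|t|\}\},
\]
and suppose for contradiction $R_*>R$. Since $u_0\in\Hb(\{|x|>R\})$ has no singularity at $R_*$, the blow-up radius $R_\theta$ of $Z_\theta$ satisfies $R_\theta<R_*$ by the ODE trichotomy of Proposition~\ref{Prop:CharacSolStatAsymp}. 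Applying the same extraction argument, now localized around $r=R_*$ (using the Cauchy theory on domains of influence of Subsection~\ref{sub:Cauchy2} and the continuity of the nonlinear flow), produces $\tilde\theta$ with $\vec u=(Z_{\tilde\theta},0)$ on a larger cone; matching on the overlap forces $\tilde\theta=\theta$ and contradicts minimality of $R_*$.

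The main obstacle is the bootstrap in the middle step: the channel estimate only gives $\|\pi^\perp\|\lesssim\|(u_0,u_1)\|^5$, and to turn this quantitative smallness into a vanishing conclusion one must carefully extract the non-radiative profile $(\theta_{R_1}/r,0)$ and iterate the estimate on the residual, exploiting both the stationarity of $\theta/r$ for the \emph{linear} equation and the improved decay of the residual at infinity. The $m$-dimensional structure of $\mathcal{N}_R$ in the vector case requires only componentwise bookkeeping, but the identification step at the end crucially uses the uniqueness part of Proposition~\ref{P:Ztheta}, which is specific to the system formulation.
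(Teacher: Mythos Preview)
Your outline has the right skeleton (linear channel estimate, extraction of a non-radiative profile, descent to $r>R$), but the central bootstrap does not close as written, and the conclusion $(u_0,u_1)=(\theta/r,0)$ on $r>R_1$ is simply false.

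The estimate $\|\pi_{R_1}^\perp(u_0,u_1)\|^2\lesssim\|(u_0,u_1)\|^{10}$ is consistent with a nonzero residual; indeed for any nonzero $\theta$ the stationary solution itself satisfies $Z_\theta(r)=\theta/r+O(r^{-3})\neq\theta/r$, so $(u_0,u_1)$ can never equal $(\theta/r,0)$ exactly. After you subtract $\theta_{R_1}/r$, the function $u-\theta_{R_1}/r$ still satisfies $(\partial_{tt}-\Delta)(u-\theta_{R_1}/r)=f(u)$, with the \emph{full} nonlinearity as source (since $\theta/r$ solves only the \emph{linear} equation). This source has size $|f(u)|\sim|\theta|^5 r^{-5}$ plus lower order; it is not a superlinear power of the residual, so iterating the channel estimate cannot drive the residual to zero. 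Your subsequent appeal to Proposition~\ref{P:Ztheta}(ii) is then circular: that proposition presupposes that $u_0$ solves the elliptic system $-\Delta u_0=f(u_0)$, which is precisely the conclusion of the theorem.

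The paper avoids this by working in two phases. First (Steps~1--3) it applies the channel estimate pointwise in $R$, obtaining $\int_R^\infty|\partial_r(ru_0)|^2+r^2|u_1|^2\lesssim R^5|u_0(R)|^{10}$, and runs a dyadic doubling argument on $\zeta_0(r)=ru_0(r)$ to show that $\theta=\lim_{r\to\infty}ru_0(r)$ exists with the quantitative bound $|u_0(r)-\theta/r|\lesssim r^{-3}$ (and that $(u_0,u_1)\equiv 0$ for large $r$ when $\theta=0$). Only then (Steps~4--5) does it subtract the \emph{nonlinear} stationary solution $Z_\theta$: the difference $v=u-Z_\theta$ satisfies $(\partial_{tt}-\Delta)v=f(Z_\theta+v)-f(Z_\theta)$, whose source is genuinely $O\big(|v|(|v|^4+|Z_\theta|^4)\big)$, and the channel argument on $v$ now legitimately forces $v\equiv0$ outside a compact set, after which the descent to $r>R$ proceeds essentially as you sketch.
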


A solution of \eqref{eq:NLW} such that \eqref{nonradiative} holds is called \emph{nonradiative}. We refer to \cite{CoDuKeMe22Pb} for a study of radial nonradiative solutions in general dimensions, in a scalar context.

We give here the consequences of Theorem \ref{T:rigidity} on the set of stationary solutions $\ZZZ$.

\begin{corol}
\label{Cor:UniquenessElliptic}
Let $R > 0 $ and $u$ be a solution of (\ref{Eqn:SemilinearEllipt}) lying in $ \mathbf{\dot{H}}_{rad}(|x| > R)$. Then there exists a (unique) $ \theta \in \mathbb{R}^{m} $ such that
\begin{equation}
\begin{array}{l}
|x| > \max(R,R_{\theta}): \; u(x) =  Z_{\theta}(x) \cdot
\end{array}
\label{Eqn:uZtheta}
\end{equation}
\end{corol}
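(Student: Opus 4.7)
The plan is to apply the rigidity theorem (Theorem \ref{T:rigidity}) to $u$ regarded as a time-independent solution of the nonlinear wave equation. Concretely, I would set $\tilde u(t,x) := u(x)$ for $(t,x)$ in the cone exterior $\{|x|>R+|t|\}$. Since $u$ solves $-\triangle u = f(u)$ on $\{|x|>R\}$ and $\partial_{tt}\tilde u \equiv 0$, the function $\tilde u$ satisfies $\partial_{tt}\tilde u -\triangle \tilde u = f(\tilde u)$ on the cone exterior, with initial data $(\tilde u,\partial_t\tilde u)_{|t=0}=(u,0) \in \HHb(|x|>R)$.

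Next I would verify the non-radiative condition \eqref{nonradiative}. Since $\partial_t\tilde u \equiv 0$, one has
\[
\int_{|x|\geq R+|t|} |\nabla_{t,x}\tilde u(t,x)|^2\,dx = \int_{|x|\geq R+|t|} |\nabla u(x)|^2\,dx.
\]
As $u\in \Hb(|x|>R)$, the function $|\nabla u|^2\indic_{|x|>R}$ is integrable on $\RR^3$, and it dominates $|\nabla u|^2\indic_{|x|\geq R+|t|}$, which converges pointwise to $0$ as $|t|\to\infty$. By dominated convergence, both limits in \eqref{nonradiative} vanish.

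Applying Theorem \ref{T:rigidity} to $\tilde u$ then produces $\theta\in \RR^m$ such that $\tilde u(t,x)=Z_\theta(|x|)$ for $|x|>R$, that is, $u(x)=Z_\theta(x)$ for $|x|>R$. For this identity to make sense, $Z_\theta$ must be defined on all of $\{|x|>R\}$, which by the maximality of $R_\theta$ in Proposition \ref{P:Ztheta} forces $R_\theta \leq R$; hence $\max(R,R_\theta)=R$ and \eqref{Eqn:uZtheta} holds. Uniqueness of $\theta$ follows from the asymptotic expansion \eqref{Eqn:EstZtheta}, which yields $\theta=\lim_{r\to\infty} r u(r)$, a quantity determined by $u$ alone. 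There is no real obstacle: the whole point of the non-radiative formulation of Theorem \ref{T:rigidity} is that stationary finite-energy solutions on an exterior region automatically satisfy its hypothesis, so the corollary reduces to a direct application.
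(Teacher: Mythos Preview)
Your proof is correct and follows essentially the same route as the paper: both treat the stationary $u$ as a time-independent (hence non-radiative) solution on $\{|x|>R+|t|\}$ and invoke Theorem~\ref{T:rigidity}, with uniqueness of $\theta$ read off from the asymptotics. The only cosmetic difference is that the paper first extends the initial data to all of $\RR^3$ via $\Psi_R(u,0)$ before appealing to finite speed of propagation, whereas you work directly with data in $\HHb(|x|>R)$ as permitted by the hypotheses of Theorem~\ref{T:rigidity}; your extra observation that necessarily $R_\theta\le R$ is a correct slight sharpening.
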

\begin{proof}
If $R > 0$ then let $\Psi_{R}$ be defined as follows: $\Psi_{R}(f,g)(r) := (f(r),g(r))$ if $r \geq R$ and $\Psi_{R}(f,g)(r) := (f(R),0)$ if $r < R$. If $R=0$ then
let $\Psi_{0}(f,g) = (f,g)$. Observe that by finite speed of propagation, $u$ with data $\Psi_{R}(u,0)$ satisfies the assumptions of Theorem \ref{T:rigidity}. Hence by application of this theorem we get (\ref{Eqn:uZtheta}) for some  $\theta \in \mathbb{R}^{m}$. The uniqueness of $\theta$ follows from the uniqueness of limits.
\end{proof}
\begin{exemple}
\label{Ex:Euclidean}
 Let $m\geq 2$, $f(u)=|u|^4u$ as in  Example \ref{Ex:W}. We have seen that for all $\omega\in \Sb^{m-1}$, $\lambda>0$, $\omega W_{(\lambda)}\in \ZZZ$. Since in this case $\omega W_{(\lambda)}=Z_{\theta}$, where $\theta=\sqrt{2\lambda}\,\omega$, we deduce from Corollary \ref{Cor:UniquenessElliptic}
 $$ \ZZZ=\left\{\omega W_{(\lambda)},\; \lambda>0,\; \omega\in \Sb^{m-1}\right\}.$$
\end{exemple}

From Corollary \ref{Cor:UniquenessElliptic} and Proposition \ref{P:Ztheta} we infer the following corollary:
\begin{corol}
The set $\ZZZ$ is exactly the set of $Z_{\theta}$, $\theta \in \RR^m\setminus\{0\}$ such that $Z_{\theta}\in \mathbf{\dot{H}}_{rad}(\RR^3)$ i.e. such that $Z_{\theta}$ is in case (c) of Proposition \ref{P:Ztheta}.
\label{Corol:ZAsymp}
\end{corol}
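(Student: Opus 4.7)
The proof will establish both inclusions in the claimed equality.

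The direction $(\supseteq)$ is essentially immediate from the definitions: if $\theta\in\RR^m\setminus\{0\}$ and $Z_{\theta}$ is in case (c) of Proposition~\ref{P:Ztheta}, then case (c) directly provides $Z_{\theta}\in\Hb$ and $-\triangle Z_{\theta}=f(Z_{\theta})$ in the sense of distributions on $\RR^3$, while the asymptotic \eqref{Eqn:EstZtheta} forces $r Z_{\theta}(r)\to\theta\neq 0$ and hence $Z_{\theta}\not\equiv 0$. Thus $Z_{\theta}\in\ZZZ$ by \eqref{DefZ}.

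For the direction $(\subseteq)$, fix $u\in\ZZZ$. Applying Corollary~\ref{Cor:UniquenessElliptic} with $R=1$ furnishes a unique $\theta\in\RR^m$ with $u=Z_{\theta}$ on $|x|>\max(1,R_{\theta})$; note that $\theta$ coincides with $\lim_{r\to\infty} r u(r)$, so reapplying the corollary with any other $R>0$ yields the same $\theta$. The plan is then to show that $\theta\neq 0$ and that $Z_{\theta}$ falls into case (c) of Proposition~\ref{P:Ztheta}.

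I would proceed in three steps. First, exclude case (a): if $R_{\theta}>0$, applying the corollary with some $R'\in(0,R_{\theta})$ gives $u=Z_{\theta}$ on $|x|>R_{\theta}$, and the standard radial Sobolev bound $|u(r)|\lesssim r^{-1/2}\|\nabla u\|_{\Lb^2}$ makes $u$ bounded on a neighborhood of the sphere $|x|=R_{\theta}>0$, contradicting $\limsup_{r\to R_{\theta}^+}|Z_{\theta}(r)|=\infty$. Hence $R_{\theta}=0$. Second, applying the corollary for every $R>0$ then yields $u=Z_{\theta}$ on $|x|>R$, hence on $\RR^3\setminus\{0\}$, so $Z_{\theta}\in\Lb^6(\RR^3)$ by Sobolev embedding; this excludes case (b), leaving case (c). Third, if $\theta=0$ the fixed-point construction of Proposition~\ref{P:Ztheta} is performed in the closed ball of radius $2|\theta|=0$ and forces $Z_0\equiv 0$, contradicting $u\in\ZZZ$. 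The only genuinely analytical input is the pointwise control of radial $\dot{H}^1$ functions away from the origin used in the first step; everything else is a direct assembly of the quoted results.
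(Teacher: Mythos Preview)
Your proof is correct and follows exactly the approach the paper indicates (the paper merely states that the corollary is inferred from Corollary~\ref{Cor:UniquenessElliptic} and Proposition~\ref{P:Ztheta} without spelling out the details). Your three steps---ruling out case (a) via the radial Sobolev bound, ruling out case (b) via the Sobolev embedding $\Hb\hookrightarrow\Lb^6$, and ruling out $\theta=0$ via the fixed-point construction---are precisely the intended unpacking of that inference.
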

Thus any element $Z$ of $\ZZZ$ satisfies
$$ \lim_{r\to\infty} r Z(r)=\lim_{r\to\infty}-r^2 Z'(r)=\theta$$
for some $\theta$ in $\RR^m\setminus \{0\}$. Moreover, for every $\theta\in \RR^m\setminus \{0\}$, there is at most one $Z$ such that the preceding limits hold.

We conclude by proving that the elements of $\ZZZ$ are regular:
\begin{corol}
 \label{C:regular}
 Let $u\in \ZZZ$. Then the Kelvin transform of $u$,
 $$ v:r\mapsto \frac{1}{r}u\left(\frac{1}{r}\right)$$
 is an element of $\ZZZ$. Furthermore $u\in \mathcal{C}^4(\RR^3)$.
\end{corol}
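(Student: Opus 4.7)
The plan is to leverage the conformal invariance of \eqref{Eqn:SemilinearEllipt} under the three-dimensional Kelvin transform, and then to bootstrap elliptic regularity near the origin once $u$ is known to be bounded there.

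To prove the first assertion, that $v \in \ZZZ$: by standard ODE theory, $u$ is $\mathcal{C}^2$ on $\RR^3 \setminus \{0\}$, so $v(x) := |x|^{-1} u(x/|x|^2)$ is $\mathcal{C}^2$ there as well. The Kelvin identity $\Delta v(x) = |x|^{-5}(\Delta u)(x/|x|^2)$, combined with the degree-$5$ homogeneity of $f$ (which gives $f(v(x)) = |x|^{-5} f(u(x/|x|^2))$), shows that $-\Delta v = f(v)$ holds classically on $\RR^3 \setminus \{0\}$. The change of variables $y = x/|x|^2$, with Jacobian $|x|^{-6}$, yields $\|v\|_{\Lb^6} = \|u\|_{\Lb^6}$, so $v \in \Lb^6(\RR^3)$ and $v \neq 0$. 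To upgrade the equation to a distributional identity on all of $\RR^3$, I would apply verbatim the cut-off argument from case (c) of Proposition~\ref{P:Ztheta}: testing against $(1-\psi_\eps)\varphi$, integrating by parts twice, and letting $\eps \to 0$, using that $v \in \Lb^6$ and $f(v) \in \Lb^{6/5}$ together with $\|\Delta \psi_\eps\|_{L^{6/5}} + \|\nabla \psi_\eps\|_{L^{6/5}} + \|\psi_\eps\|_{L^{6}} \to 0$. The interpolation $\|\nabla v\|_{\Lb^2} \lesssim \|\Delta v\|_{\Lb^{6/5}}^{1/2} \|v\|_{\Lb^6}^{1/2}$ then gives $v \in \Hb$, so $v \in \ZZZ$.

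Once $v \in \ZZZ$ is known, Corollary~\ref{Corol:ZAsymp} and Proposition~\ref{P:Ztheta}(1) applied to $v$ provide some $\theta' \in \RR^m \setminus \{0\}$ with $v = Z_{\theta'}$ and $rv(r) \to \theta'$ as $r \to \infty$. Inverting the Kelvin transform gives $u(s) = s^{-1} v(1/s) \to \theta'$ as $s \to 0^+$, so $u$ extends continuously to the origin and is bounded on a neighborhood of $0$.

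A standard elliptic bootstrap then produces the $\mathcal{C}^4$ regularity. Since $u \in L^\infty_{\mathrm{loc}}$ and $f \in \mathcal{C}^2$, we have $f(u) \in L^\infty_{\mathrm{loc}}$, so Calder\'on--Zygmund yields $u \in W^{2,p}_{\mathrm{loc}}$ for every finite $p$, and in particular $u \in \mathcal{C}^{1,\alpha}_{\mathrm{loc}}$. The chain rule gives $f(u) \in \mathcal{C}^{1,\alpha}_{\mathrm{loc}}$, Schauder estimates lift $u$ to $\mathcal{C}^{3,\alpha}_{\mathrm{loc}}$, and one further pass (using $f \in \mathcal{C}^2$) gives $f(u) \in \mathcal{C}^2$ and thus $u \in \mathcal{C}^4(\RR^3)$. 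The main technical point is the first step, namely extending the equation distributionally across the origin with no pointwise control on $v$ near $0$: the fact that $v \in \Lb^6$, obtained directly by a change of variables, is precisely what makes the Proposition~\ref{P:Ztheta}(c) cut-off argument applicable.
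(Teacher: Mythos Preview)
Your proof is correct and follows essentially the same route as the paper: establish $-\Delta v = f(v)$ away from the origin via the Kelvin identity and degree-$5$ homogeneity, use the $\Lb^6$-invariance of the transform together with the cut-off argument of Proposition~\ref{P:Ztheta}(c) to extend across $0$ and obtain $v \in \ZZZ$, read off $u(0)=\theta'$ from the asymptotics of $v$, and bootstrap. The only organizational difference is that the paper first identifies $v=Z_{\tilde\theta}$ via Corollary~\ref{Cor:UniquenessElliptic} and then invokes case~(c), whereas you run the cut-off argument directly on $v$ and invoke Corollary~\ref{Corol:ZAsymp} only afterward to extract the asymptotics; both orderings are valid.
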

\begin{proof}
To prove that $u\in \mathcal{C}^{4}(\RR^3,\RR^m)$, it is sufficient to prove that $u\in \mathcal{C}^0(\RR^3)$, the conclusion will then follow by standard elliptic theory and a bootstrap argument, using that $f$ is $\mathcal{C}^2$. Since there exists $\theta \in \RR^m\setminus \{0\}$ such that $u=Z_{\theta}$, we already know (see Proposition \ref{P:Ztheta}) that $u\in \Hb(\RR^3)$, and thus that it is continuous on $\RR^3\setminus \{0\}$. Using the equation $\frac{d^2}{dr^2}(r^2u)=r^2f(u)$, we obtain that $u$ is $\mathcal{C}^4$ on $\RR^3\setminus \{0\}$.

The Kelvin transform $v$ of $u$ is in $\mathcal{C}^4(\{|x|>R\})$ for all $R>0$. By straightforward computations, using the properties of $u$, we see that $v$ is in $\Lb^6(\RR^3)$, in $\Hb(\{|x|>R\})$ for all $R>0$, and that it satisfies the equation $-\triangle v=f(v)$ on $\RR^3\setminus\{0\}$. By Corollary \ref{Eqn:uZtheta}, $v=Z_{\tilde{\theta}}$ for some $\tilde{\theta}\in \RR^m\setminus \{0\}$. Since $v\in \Lb^6(\RR^3)$, we are in case $(c)$ of Proposition \ref{P:Ztheta}, which shows that $v\in \ZZZ$. Also, we observe that
$$ \lim_{r\to 0} u(r)=\lim_{r\to 0} \frac{1}{r}v\left( \frac{1}{r} \right)=\tilde{\theta},$$
and thus $u$ can be extended to a continuous function on all $\RR^3$. This concludes the proof.
\end{proof}

\subsection{Properties of the set of stationary states for a potential-like nonlinearity}
\label{sub:potential}

In this subsection we list and prove some properties of $\mathcal{Z}$ when the nonlinearity is of the form $f=\nabla_{\RR^m} F$. We start by a general description of $\ZZZ$ (Proposition \ref{Prop:PropZ}). We will then prove that $\ZZZ$ is empty if and only $F$ is nonpositive, and give an explicit element of $\ZZZ$ when it is not empty (see Proposition \ref{P:GroundState} below).

\begin{prop}
Assume that $f$ satisfies \eqref{H0} and let $F$ such that $f=\nabla_{\RR^m}F$. Then
\begin{enumerate}
\item \label{I:Z1}for all $ Q \in \mathcal{Z}$, $ \int |\nabla Q|^2 \,dx =  6 \int F(Q) \,dx $ and $E(Q) = \frac{1}{3} \int_{\mathbb{R}^{3}} | \nabla Q(x) |^{2} \,dx$.
\item \label{I:Z2} There exists $m>0$ such that for all $Q\in \ZZZ$, $E(Q)\geq m$.
\item \label{I:Z3} $\mathcal{Z}$ is closed in $\Hb$. $\mathcal{Z}\cup \{0\}$ is closed in $\Hb$-weak.\end{enumerate}
\label{Prop:PropZ}
\end{prop}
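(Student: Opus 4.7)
For (i), the plan is to multiply the equation $-\triangle Q=f(Q)=\nabla F(Q)$ by $Q$ and integrate by parts over $\RR^3$. The integration by parts is justified by $Q\in\Hb$ and the regularity $Q\in\mathcal{C}^4$ from Corollary \ref{C:regular}, together with the decay $|Q(r)|=O(r^{-1})$, $|Q'(r)|=O(r^{-2})$ coming from Proposition \ref{P:Ztheta}. This yields $\int|\nabla Q|^2=\int\nabla F(Q)\cdot Q$. Then I invoke Euler's identity for the homogeneous function $F$ of degree $6$ (which follows from differentiating $F(\lambda x)=\lambda^6 F(x)$ at $\lambda=1$): $\nabla F(Q)\cdot Q=6F(Q)$. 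The Pohozaev-type identity $\int|\nabla Q|^2=6\int F(Q)$ follows, and substituting into the definition of $E(Q)$ gives $E(Q)=\frac{1}{3}\int|\nabla Q|^2$.

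For (ii), I would use the equation once more, together with the homogeneity bound $|f(Q)|\leq C|Q|^5$ from \eqref{H1}: the quantity $\int|\nabla Q|^2=\int f(Q)\cdot Q$ is bounded by $C\int|Q|^6$. The critical Sobolev embedding $\Hb\hookrightarrow \Lb^6$ (applied componentwise) gives $\int|Q|^6\leq C_S(\int|\nabla Q|^2)^3$. Combining,
\[
\int|\nabla Q|^2\leq C\,C_S\left(\int|\nabla Q|^2\right)^3.
\]
Since $Q\neq 0$ forces $\int|\nabla Q|^2>0$, one divides to obtain $\int|\nabla Q|^2\geq (CC_S)^{-1/2}=:3m$, hence $E(Q)\geq m$ by (i).

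For (iii), I split into the strong and weak cases. Strong closure: take $Q_n\in\ZZZ$ with $Q_n\to Q$ in $\Hb$. The Sobolev embedding gives $Q_n\to Q$ in $\Lb^6$, and the pointwise bound $|f(u)-f(v)|\lesssim(|u|^4+|v|^4)|u-v|$ (inequality \eqref{Eqn:fufv}) combined with H\"older implies $f(Q_n)\to f(Q)$ in $\Lb^{6/5}$. One passes to the limit in $-\triangle Q_n=f(Q_n)$ in the sense of distributions to get $-\triangle Q=f(Q)$. To exclude $Q=0$, I use (ii): $\int|\nabla Q_n|^2\geq 3m$, and strong convergence preserves this lower bound, so $Q\neq 0$ and $Q\in\ZZZ$.

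For the weak closure, let $Q_n\in\ZZZ$ with $Q_n\rightharpoonup Q$ weakly in $\Hb$. The main point is to pass to the limit in $f(Q_n)$ in $\mathcal{D}'(\RR^3)$. After extraction, the Rellich--Kondrachov theorem gives strong convergence $Q_n\to Q$ in $\Lb^p_{\mathrm{loc}}$ for any $p<6$, in particular for $p=5$. Testing against $\varphi\in C_0^\infty$, the inequality \eqref{Eqn:fufv} together with boundedness of $\|Q_n\|_{\Lb^6}$ and compactly supported test functions gives $\int f(Q_n)\cdot\varphi\to\int f(Q)\cdot\varphi$. So $-\triangle Q=f(Q)$ in distributions. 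If $Q\neq 0$, $Q\in\Hb$ by weak lower semicontinuity, so $Q\in\ZZZ$; otherwise $Q=0$. This proves $\ZZZ\cup\{0\}$ is weakly closed.

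The only mildly delicate step is passing to the limit in the nonlinearity under only weak convergence for (iii); everything else reduces to Euler's identity, the Sobolev inequality, or standard integration by parts justified by the regularity and decay of elements of $\ZZZ$ established in Proposition \ref{P:Ztheta} and Corollary \ref{C:regular}.
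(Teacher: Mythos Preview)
Your proposal is correct and follows essentially the same route as the paper: Euler's identity for (i), the Sobolev embedding combined with the Pohozaev relation for (ii), and Rellich--Kondrachov plus the Lipschitz bound \eqref{Eqn:fufv} for (iii). One minor remark: your appeal to Corollary~\ref{C:regular} and the pointwise decay from Proposition~\ref{P:Ztheta} to justify the integration by parts in (i) is unnecessary, since $Q\in\Hb$ and $f(Q)\in\Lb^{6/5}$ already allow you to pair the equation (which holds in the distributional sense) with $Q\in\Lb^6$ directly; the paper proceeds this way and avoids invoking the regularity corollary.
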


\begin{proof}
Multiplying  $ - \triangle Q = f(Q) $ by $Q$, integrating by parts, and using the
equality $ u \cdot f(u) =  6  F(u)$ (this is derived from the differentiation with respect to $\lambda$ of
$ F(\lambda u) = \lambda^{6} F(u) $ ) we get
\begin{equation}
\begin{array}{l}
\int_{\mathbb{R}^{3}} |\nabla Q|^{2} \,dx  = \int_{\mathbb{R}^{3}} Q f(Q) = 6 \int_{\mathbb{R}^{3}} F(Q) \,dx  \cdot
\end{array}
\label{Eqn:PohozaevEq}
\end{equation}
Hence $E(Q) = \frac{1}{3} \int_{\mathbb{R}^{3}} |\nabla Q |^{2} \,dx $. \\
\\
We then prove \eqref{I:Z2}. Indeed from the Sobolev embedding $\dot{H}^{1} \hookrightarrow L^{6} $,
the bound $|F(u)|\lesssim |u|^6$, and (\ref{Eqn:PohozaevEq}) we see that there exists $C_{*} > 0 $ such that
\begin{equation}
\begin{array}{l}
\int_{\mathbb{R}^{3}} |\nabla Q|^{2} \,dx \leq C_{*}  \left( \int_{\mathbb{R}^{3}} |\nabla Q|^{2} \;  dx \right)^{3} \cdot
\end{array}
\nonumber
\end{equation}
Hence $Q = 0$ or $\int_{\mathbb{R}^{3}} |\nabla Q |^{2} \,dx \gtrsim 1$. Since by the definition of $\ZZZ$, $O\notin \ZZZ$, we deduce that the second case holds. This yields \eqref{I:Z2} using \eqref{I:Z1}.

We then prove that $\mathcal{Z}$ is closed in $\Hb$. Indeed if $Q_{k} \in \mathcal{Z}$ and $Q_{k} \rightarrow Q $
in $\Hb$ as $k \rightarrow \infty$, we have $E(Q)\geq m$, and thus $Q$ is not identically $0$. Furthermore
$\triangle Q_{k} \rightarrow \triangle Q $ in $ \dot{H}^{-1} \hookrightarrow
L^{\frac{6}{5}} $. On the other hand $f(Q_{k}) \rightarrow f(Q)$ in $L^{\frac{6}{5}}$: this follows from (\ref{Eqn:fufv}) and
Hence $-\triangle Q=f(Q)$, which shows $Q \in \mathcal{Z}$.

We next prove that $\mathcal{Z}\cup \{0\}$ is closed in $\Hb-$ weak. Indeed assume that $Q_{k} \rightharpoonup Q$. Observe that The Rellich-Kondrachov compactness theorem shows that $Q_{k} \rightarrow Q$ in $L^{5}_{loc}$. Thus  $f(Q_{k}) \rightarrow f(Q)$ in $L^{1} \subset L^{1}_{loc}$: this follows from (\ref{Eqn:fufv}).
Letting $k \rightarrow \infty $ in  $ \int_{\mathbb{R}^{3}} \nabla Q_{k} \cdot \nabla \phi \,dx = \int_{\mathbb{R}^{3}} f(Q_{k}) \phi \,dx $ for
$ \phi \in \mathcal{C}_{c}^{\infty} (\mathbb{R}^{3})$ test function, we get $ - \triangle Q  = f(Q) $. Since $Q\in \Hb$, we deduce that $Q\in \ZZZ\cup \{0\}$.


\end{proof}
%
%

\begin{prop}
 \label{P:GroundState}
 Assume that $f$ satisfies \eqref{H0}. Then
 \begin{equation}
 \label{CNS_emptyZ}
  \ZZZ\neq \emptyset \iff \exists u\in \RR^m \text{ s.t. } F(u)>0.
 \end{equation}
 Furthermore, if \eqref{CNS_emptyZ} is satisfied, and $\omega\in \Sb^{m-1}$ is such that $$F(\omega)=\max_{u\in \Sb^{m-1}} F(u),$$ then letting
 \begin{equation}
  \label{CNS10}
  \mu=\left( 6F(\omega) \right)^{-1/4}, \quad Z(x)=\mu W(x)\omega,
 \end{equation}
 we have that $Z\in \ZZZ$ and
 \begin{multline}
  \label{CNS11}
  \int F(Z)=\min_{Q\in \NNN} \int F(Q),\\
  \text{ where } \NNN=\left\{ Q\in \Hb\setminus \{0\},\; 6\int F(Q)=\int |\nabla Q|^2\right\}.
 \end{multline}
 Conversely any solution $Z\in \Hb\setminus \{0\}$ of the minimization problem \eqref{CNS11} is of the form \eqref{CNS10} where $\omega$ is a maximum of $F$ on $\Sb^{m-1}$.
\end{prop}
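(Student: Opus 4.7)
The plan is to treat the two implications of \eqref{CNS_emptyZ} separately, then construct the explicit ground state $Z$, prove the minimization property via a combination of Sobolev and pointwise homogeneity inequalities, and finally analyze the equality cases to get the converse.

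First I would handle the easy direction $\ZZZ\neq\emptyset\Rightarrow\exists u,F(u)>0$: given $Q\in\ZZZ$, Proposition \ref{Prop:PropZ}\eqref{I:Z1}--\eqref{I:Z2} yield $6\int F(Q)=\int|\nabla Q|^2\geq 3m>0$, so $F$ takes positive values somewhere. For the reverse direction, I would construct $Z$ explicitly. By continuity and compactness of $\Sb^{m-1}$, $F$ attains its maximum at some $\omega\in\Sb^{m-1}$; homogeneity of $F$ of degree $6$ together with the hypothesis $F(u_0)>0$ shows $F(\omega)>0$. The Lagrange multiplier rule gives $f(\omega)=\nabla F(\omega)=\lambda\omega$ for some $\lambda\in\RR$, and Euler's identity $u\cdot f(u)=6F(u)$ forces $\lambda=\omega\cdot f(\omega)=6F(\omega)$. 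Setting $\mu=(6F(\omega))^{-1/4}$ and $Z=\mu W\omega$, the identity $-\triangle W=W^5$, the relation $f(\omega)=6F(\omega)\omega$, and homogeneity of $f$ combine to give $-\triangle Z=\mu W^5\omega=\mu^5 W^5 f(\omega)=f(\mu W\omega)=f(Z)$, proving $Z\in\ZZZ$.

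Next I would prove the minimization identity \eqref{CNS11}. Since $F$ is homogeneous of degree $6$ and $F(\omega)=\max_{\Sb^{m-1}}F$, one has the pointwise bound $F(u)\leq F(\omega)|u|^6$ for every $u\in\RR^m$. Combining this with the vector-valued critical Sobolev inequality
\begin{equation*}
\left(\int W^6\right)^2\int|Q|^6\leq\left(\int|\nabla Q|^2\right)^3,
\end{equation*}
which follows from \eqref{CriticalSobolev} applied to the scalar $|Q|$ together with Kato's inequality $|\nabla|Q||\leq|\nabla Q|$, and with the Pohozaev constraint $A:=\int|\nabla Q|^2=6\int F(Q)$, one obtains $A/6\leq F(\omega)(\int W^6)^{-2}A^3$, hence $A\geq\int W^6/\sqrt{6F(\omega)}$ and therefore $\int F(Q)\geq\int W^6/(6\sqrt{6F(\omega)})$ for every $Q\in\NNN$. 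A direct computation using the scaling-invariance of $\int|\nabla\cdot|^2$ and $\int|\cdot|^6$ and the identity $\int|\nabla W|^2=\int W^6$ shows that $Z=\mu W\omega$ saturates this lower bound, so $Z$ realizes the infimum.

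Finally, for the converse I would trace back the equality cases. If $Q\in\Hb\setminus\{0\}$ achieves the minimum, then necessarily equality holds simultaneously in $F(Q(x))\leq F(\omega)|Q(x)|^6$ almost everywhere, in Kato's inequality $|\nabla|Q||\leq|\nabla Q|$, and in the scalar Sobolev inequality for $|Q|$. The Kato equality forces $\nabla Q$ to be pointwise parallel to $Q$, so $Q(x)=h(x)\nu$ for some fixed $\nu\in\Sb^{m-1}$ and scalar radial function $h$; the Sobolev equality case (Aubin--Talenti) then forces $h=\alpha W_{(\lambda)}$ for some $\alpha>0$, $\lambda>0$; and the pointwise equality in $F$ forces $F(\nu)=F(\omega)$, i.e.\ $\nu$ is a maximizer of $F$ on $\Sb^{m-1}$. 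Re-imposing the Pohozaev constraint $Q\in\NNN$ determines $\alpha=(6F(\nu))^{-1/4}$, so $Q$ has the form \eqref{CNS10} (up to the invariant rescaling $W\mapsto W_{(\lambda)}$).

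The main obstacle I expect is the converse step: carefully justifying that equality in the vector-valued Sobolev inequality forces $Q$ to be parallel to a fixed direction (so that one may reduce to the scalar Aubin--Talenti classification) requires a clean use of Kato's inequality and the fact that the extremals of the scalar critical Sobolev inequality are exactly the rescaled $W$'s. Everything else is a direct computation from Euler's identity, the Pohozaev relation, and the explicit formula for $\mu$.
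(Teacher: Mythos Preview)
Your proposal is correct and follows essentially the same approach as the paper: both directions of \eqref{CNS_emptyZ} via Pohozaev, construction of $Z$ via Lagrange multipliers and Euler's identity, the minimization via the chain (pointwise bound $F(u)\le F(\omega)|u|^6$) $\Rightarrow$ (Sobolev on $|Q|$ with Kato's inequality) $\Rightarrow$ (Pohozaev constraint), and the converse by tracing equality cases. The only cosmetic difference is that the paper first uses Sobolev equality to get $|Q|=\mu W_{(\lambda)}$ (hence nowhere zero) and \emph{then} invokes Kato equality to obtain $Q=|Q|\omega$, whereas you reverse the order; the paper's ordering avoids having to argue separately that the direction $Q/|Q|$ is globally constant.
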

\begin{rem}
 Observe that $F$ always have a maximum on $\Sb^{m-1}$ by compactness. Moreover, if there exists $u$ such that $F(u)>0$, then by the homogeneity of $F$, the corresponding maximal value of $F$ on $\Sb^{m-1}$ is positive. Thus the proposition shows that if \eqref{CNS_emptyZ} is satisfied, $\ZZZ$ always has an element of the form 
 $z W$, $z\in \RR^{m}\setminus\{0\}$.
\end{rem}
\begin{rem}
Since by Proposition \ref{Prop:PropZ}, $E(Q)=2\int F(Q)$ for $Q\in \ZZZ$, we see that the solutions $Z$ of the minimizing problem \eqref{CNS11} are ground states stationary solutions of \eqref{eq:NLW}, in the sense that they are nonzero stationary solutions of minimal energy. The proposition shows that these ground states always exist when $\ZZZ$ is not empty, and that they are of the form \eqref{CNS10}, where $\omega\in \Sb^{m-1}$ is any maximizer of $F$.
\end{rem}
\begin{exemple}
If $m\geq 2$ and $F$ is given by $F(u)=\frac{1}{6}\sum_{j=1}^m u_j^6$ (i.e. \eqref{eq:NLW} is a system of $m$ decoupled focusing equation), then for any $(\lambda_j)_j \in \RR^m\setminus \{0\}$, $\left(W_{(\lambda_j)}\right)_{1\leq j\leq m}$ is an element of $\ZZZ$. This shows that in this case, there are elements of $\ZZZ$ that are not of the form $\omega W$, $\omega\in \RR^m$. However, we do not know any example of nonlinearity $f$ that satisfies \eqref{H0} such that $\ZZZ$ contains elements that are not of the form
$$\sum_{j=1}^J \omega_j W_{(\lambda_j)}, \quad \omega_j\in S^m, \; \lambda_j>0.$$
 Note that when $f(u)=(u_1^5,5u_1^4u_2)$ (that satisfies \eqref{H1} but not \eqref{H0}), $(W,x\cdot \nabla W+\frac{1}{2}W)$ is an element of $\ZZZ$ which is not of the preceding form. See also Example \ref{Ex:notH0} below where we describe $\ZZZ$ in this case, and also the other examples in Subsection \ref{sub:examples}.
 \end{exemple}

\begin{proof}[Proof of Proposition \ref{P:GroundState}]
\emph{Step 1.}  We first assume that for all $u\in\RR^m$, $F(u)\leq 0$. Let $Q\in \ZZZ$. Then by Proposition \ref{Prop:PropZ}, \eqref{I:Z1}, $\int |\nabla Q|^2\leq 0$, which shows that $Q$ is identically $0$, contradicting the definition of $\ZZZ$.

\smallskip

In the next $3$ steps, we assume
\begin{equation}
 \label{F(u)>0}
 \exists u\in \RR^m,\quad F(u)>0.
\end{equation}

\noindent \emph{Step 2.} By the homogeneity of $F$, there exists $u\in \Sb^{m-1}$ such that $F(u)>0$. Let $\omega\in \Sb^{m-1}$ such that
\begin{equation}
\label{defomega}
F(\omega)=\max_{u\in \Sb^{m-1}} F(u).
\end{equation}
 In this step we show that $Z$ defined by \eqref{CNS10} is an element of $\ZZZ$. Since $\omega$ is a maximum of $F$ on $\Sb^{m-1}$, the gradient of $F$ at $\omega$ must be normal to the tangent space of $\Sb^{m-1}$ at $\omega$. In other terms, there exist $a \in \RR$ such that $f(\omega)=a\omega$. Using that 
 $\omega \cdot f(\omega)=6F(\omega)$ (this comes from differentiating $ F(\lambda \omega) = \lambda^{6} F(\omega) $ at $\lambda =1$), we obtain $a=6F(\omega)>0$.  As a consequence $f(\omega/a^{1/4})=\omega/a^{1/4}$, which shows (see Example \ref{Ex:W}) that $Z=\mu \omega W$, $\mu=a^{-1/4}$, is an element of $\ZZZ$. By Proposition \ref{Prop:PropZ},  $6\int F(Z)=\int |\nabla Z|^2$. Thus $6\mu^6 F(\omega)\int W^6=\mu^2\int |\nabla W|^2$, which yields \eqref{CNS10} since $\int |\nabla W|^2=\int W^6$.

 \smallskip

 \noindent\emph{Step 3.} We next prove that the solution $Z$ defined in the preceding step satisfies \eqref{CNS11}. We note that
 \begin{equation}
  \label{CNS29}
  \int F(Z)=\mu^6 F(\omega)\int W^6=6^{-3/2}F(\omega)^{-1/2}\int W^6.
 \end{equation}
 Let $Q\in \Hb\setminus \{0\}$ such that $6\int F(Q)=\int |\nabla Q|^2$. We have
 \begin{multline}
  \label{CNS30}
  \int F(Q)=\int_{Q\neq 0} |Q|^6 F\left( \frac{Q}{|Q|} \right)\leq F(\omega)\int |Q|^6\\
  \leq F(\omega) \left( \int \left|\nabla |Q|\right|^2 \right)^3\left( \int W^6\right)^{-2},
 \end{multline}
where we have used the critical Sobolev inequality \eqref{CriticalSobolev} on $\varphi=|Q|$. By the inequality $\nabla |Q|\leq |\nabla Q|$ a.e. (see \cite{LiebLoss01BO}), then the assumption $\int |\nabla Q|^2=6\int F(Q)$, we deduce
\begin{equation}
 \label{CNS31}
 \int F(Q)\leq F(\omega) \frac{\left( \int |\nabla Q|^2 \right)^3}{\left( \int W^6 \right)^{2}}=F(\omega) \frac{\left( 6\int F(Q) \right)^3}{\left( \int W^6 \right)^{2}}
\end{equation}
Hence
$$ \left( \int W^6\right)^2\leq 6^3 F(\omega) \left( \int F(Q) \right)^2, $$
which yields, in view of \eqref{CNS29}, the desired inequality $\int F(Z)\leq \int F(Q)$, which proves \eqref{CNS11}.
\smallskip

\noindent\emph{Step 4.} To conclude the proof of the proposition, we must show that if $Q\in \Hb\setminus\{0\}$ is such that $6\int F(Q)=\int |\nabla Q|^2$ and
\begin{equation}
\label{minimization}
\int F(Q)= \min_{R\in \NNN} \int F(R),
\end{equation}
where $\NNN$ is defined in \eqref{CNS11}, then (up to a rescaling) $Q=\mu \omega W$, where  $\omega$, $\mu$ are in \eqref{CNS10}, \eqref{defomega}. Indeed, in this case, all the inequalities in the preceding step must be equalities. Assuming equality in the second inequality in \eqref{CNS30}, we obtain $|Q|=\mu W_{(\lambda)}$ for some $c>0$, $\mu>0$ (see Example \ref{Ex:W}). Since we must have equality (almost everywhere) in the inequality $\nabla |Q|\leq |\nabla Q|$, we obtain (see \cite{LiebLoss01BO}) that $Q=|Q| \omega$ for a fixed $\omega \in \Sb^{m-1}$. Thus $Q=\mu \omega W_{(\lambda)}$, and the fact that $Q$ satisfies the minimization problem \eqref{minimization} shows that $\mu$ is as in \eqref{CNS10}, concluding the proof.
\end{proof}

We next give additional necessary conditions for a function $Q$ to be in $\ZZZ$:
\begin{prop}
Assume \eqref{H0}. Let $Q\in \mathcal{Z}$. Then
$$ F(Q(0))>0 \text{ and }F(\theta)>0,$$
where $\theta\in \RR^m$ is such that $\lim_{r\to\infty} rQ(r)=\theta$.
\end{prop}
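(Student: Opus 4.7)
The plan is to prove the two inequalities in sequence, deducing the second from the first via the Kelvin transform. By Corollary \ref{C:regular}, any $Q\in\ZZZ$ belongs to $\mathcal{C}^4(\RR^3,\RR^m)$, so $Q(0)$ is well-defined; moreover, by Corollary \ref{Corol:ZAsymp} and \eqref{Eqn:EstZtheta}, we have $Q(r)=\theta/r+O(r^{-3})$ and $Q'(r)=-\theta/r^2+O(r^{-4})$ as $r\to\infty$, and $Q'(0)=0$ by the radial smoothness of $Q$ at the origin. I will first establish the identity
\begin{equation*}
F(Q(0))=2\int_0^\infty \frac{|Q'(r)|^2}{r}\,dr,
\end{equation*}
which immediately gives $F(Q(0))\geq 0$, and then argue by contradiction for strict positivity.

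The derivation uses the standard energy multiplier $Q'$ in the radial ODE $Q''+\tfrac{2}{r}Q'+f(Q)=0$ satisfied by $Q$ on $(0,\infty)$. Taking the dot product with $Q'$ and using that $f=\nabla_u F$ (so $f(Q)\cdot Q'=\tfrac{d}{dr}F(Q)$), one obtains
\begin{equation*}
\frac{1}{2}\frac{d}{dr}|Q'|^2+\frac{2}{r}|Q'|^2+\frac{d}{dr}F(Q)=0.
\end{equation*}
Integrating between $\eps>0$ and $R>0$ yields
\begin{equation*}
\tfrac{1}{2}\big(|Q'(R)|^2-|Q'(\eps)|^2\big)+2\int_\eps^R \frac{|Q'(r)|^2}{r}\,dr+F(Q(R))-F(Q(\eps))=0.
\end{equation*}
I then send $\eps\to 0$ and $R\to\infty$: the boundary terms $|Q'(\eps)|^2$ and $F(Q(\eps))$ converge to $0$ and $F(Q(0))$ respectively by the $\mathcal{C}^4$ regularity of $Q$ at the origin, while $|Q'(R)|^2=O(R^{-4})$ and $F(Q(R))=O(|Q(R)|^6)=O(R^{-6})$ by the asymptotics at infinity. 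The integral $\int_0^\infty |Q'|^2/r\,dr$ converges since $|Q'|^2/r=O(r)$ near $0$ (using $Q'(0)=0$) and $O(r^{-5})$ near infinity, giving the claimed identity.

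From the identity, $F(Q(0))\geq 0$ with equality iff $Q'\equiv 0$; but then $Q$ would be constant and, since $Q(r)\to 0$ at infinity, identically zero, contradicting $Q\in\ZZZ$. Hence $F(Q(0))>0$. For the second inequality, I invoke Corollary \ref{C:regular}: the Kelvin transform $\widetilde{Q}(r)=r^{-1}Q(1/r)$ is itself an element of $\ZZZ$, hence $\mathcal{C}^4$ on $\RR^3$. By the asymptotics $rQ(r)\to\theta$ at infinity, we have $\widetilde{Q}(0)=\lim_{r\to 0^+}r^{-1}Q(1/r)=\theta$. Applying the first part to $\widetilde{Q}$ gives $F(\theta)=F(\widetilde{Q}(0))>0$, completing the proof. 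The main subtlety is ensuring that all boundary terms vanish in the energy identity and that the integral on the right converges, both of which are supplied by the sharp asymptotics from Proposition \ref{P:Ztheta} and the regularity from Corollary \ref{C:regular}.
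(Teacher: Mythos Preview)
Your proof is correct and follows essentially the same approach as the paper: multiply the radial ODE by $Q'$ to obtain $\frac{d}{dr}\big(\tfrac12|Q'|^2+F(Q)\big)=-\tfrac{2}{r}|Q'|^2$, integrate using the regularity at the origin and the asymptotics at infinity, then apply the Kelvin transform for the second inequality. Your version is in fact more explicit than the paper's, as you write out the identity $F(Q(0))=2\int_0^\infty |Q'|^2/r\,dr$, verify convergence of the integral at both endpoints, and spell out the strict positivity argument via $Q'\not\equiv 0$.
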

\begin{rem}
Recall that Corollary \ref{Cor:UniquenessElliptic} shows that there exists a (unique)  $\theta \in \mathbb{R}^{m}$ such that
$\lim_{r\to\infty} rQ(r)=\theta$.
\label{Rem:ThetaStatSol}
\end{rem}
\begin{proof}
Let  $Q \in \mathcal{Z}$. Then we see from Corollary \ref{C:regular}, that $Q \in \mathcal{C}^{4}(\RR^3)$. Hence
\begin{equation}
\begin{array}{l}
\frac{d^{2}}{d r^{2}}Q+\frac{2}{r}\frac{d}{dr} Q + r f(Q) = 0, r \geq 0
\end{array}
\label{Eqn:EQZCond}
\end{equation}
$ Q'(0) = 0 $, $Q\in \mathcal{C}^{2}([0,\infty),\RR^m$. We recall also that $Q=Z_{\theta}$ and thus, from proposition \ref{P:Ztheta}
$$ \lim_{r\to\infty}r^2\frac{dQ}{dr}=-\theta$$.
Multiplying \eqref{Eqn:EQZCond} by $\frac{dQ}{dr}$, we obtain,
\begin{equation}
\begin{array}{l}
\frac{d}{dr} \left( \frac{1}{2} Q_{r}^{2} + F(Q) \right) = - \frac{2}{r}(Q_{r})^{2}
\end{array}
\nonumber
\end{equation}
Hence, integrating the above equality from $0$ to $r$, and letting $ r \rightarrow \infty $, we see that $F(Q(0)) > 0$.

Let $V$ be the image of $Q$ by the Kelvin transform, i.e $ V(r) := \frac{1}{r} Q \left( \frac{1}{r} \right)$. From Corollary \ref{C:regular}, $V\in \ZZZ$. Furthermore, $ V(0) = \theta $. Hence we see from the previous
result that $ F(V(0))=F(\theta) > 0 $.
\end{proof}

\subsection{More examples}
\label{sub:examples}
In this Subsection we give other examples of nonlinearity satisfying \eqref{H1} or the stronger \eqref{H0} where we can compute $\ZZZ$, or at least give some elements of $\ZZZ$.
\begin{exemple}
\label{Ex:notH0}
Assume $m=2$ and $f(u) :=  \left( u_{1}^{5},5 u_{1}^{4} u_{2}\right) $, which satisfies \eqref{H1} but not \eqref{H0}. Let $(u_1,u_2)$ be an element of $\ZZZ$. Then $u_1$ satisfies
$-\triangle u_1=u_1^5$. Thus $u_1$ is identically $0$ or $u_1$ is of the form $\pm W_{(\lambda)}$ for some sign $\pm$ and some scaling factor $\lambda>0$. In the first case, we obtain that $-\triangle u_2=0$ with $u_2\in \dot{H}^1(\RR^3)$. Thus $(u_1,u_2)=(0,0)$, contradicting the definition of $\ZZZ$. In the second case, we see that $(\pm W_{(\lambda)},u_2)$ is in $\ZZZ$ if and only if $u_2$ is solution of the equation
$$ -\triangle u_2=5 W_{(\lambda)}^4u_2.$$
Note that $u_2=a Y_{(\lambda)}$, $a\in \RR$, give a family of solution of this equation, with $Y=x\cdot W+\frac{1}{2}W$. As a consequence, we have
\begin{equation}
\label{inclusion}
\left\{ \left(\pm W_{(\lambda)}, a Y_{(\lambda)}\right),\; \lambda>0,\; a\in \RR\right\} \subset \ZZZ.
\end{equation}
We claim that
\begin{equation}
\label{equality}
\left\{ \left(\pm W_{(\lambda)}, a Y_{(\lambda)}\right),\; \lambda>0,\; a\in \RR\right\} =\ZZZ.
\end{equation}
Indeed, looking at the asymptotics of $W_{(\lambda)}$ and $Y_{(\lambda)}$, we see that the left-hand side of \eqref{inclusion} is
$$ \left\{Z_{\theta},\; \theta\in \RR^2\setminus \{(0,\sigma),\; \sigma \in \RR\} \right\}.$$
Furthermore, if $\theta=(0,\sigma)$ with $\sigma\in \RR$, we see that $Z_{\theta}(r)=\left(0,\frac{\sigma}{r}\right)\notin \Lb^6(\RR^3)$, and thus that $Z_{\theta}\notin \ZZZ$. This concludes the proof of \eqref{equality}.
\end{exemple}

\begin{exemple}
 \label{Ex:mixte}
 If $m=2$ and $F(u)=\frac{1}{3}u_1^3u_2^3$, that is $f(u)=\left(
                 u_1^2u_2^3,u_2^2u_1^3\right),$
it is easy to check that the maximum value  of $F$ on $\Sb^{1}$ is $\frac{1}{24}$, and that it is attained at the points $\pm \left(\frac{1}{\sqrt{2}},\frac{1}{\sqrt{2}}\right)$. This corresponds to the two families of ground state solutions $\pm (W_{(\lambda)},W_{(\lambda)})$, $\lambda>0$. As a consequence of Proposition \ref{Prop:DefocDirecProp} below, $\ZZZ$ is exactly $\left\{ \pm\left(W_{(\lambda)},W_{(\lambda)} \right),\; \lambda>0\right\}$.
\end{exemple}
\begin{prop}
Let $f(u)$ satisfy \eqref{H1}. Let $\omega\in \RR^m\setminus \{0\}$
such that
for all $ u \in \mathbb{R}^{m}$
\begin{equation}
\left( f(u) \cdot \omega \right) \left( u \cdot \omega  \right) \leq 0.
\label{Eqn:CondDefoc}
\end{equation}
(In this case we will say that $\omega$ is a \emph{defocusing direction} for $f(u)$). Then if $\theta$ is such that $Z_{\theta}\in \ZZZ$, we have
$\omega \cdot \theta = 0$.
\label{Prop:DefocDirecProp}
\end{prop}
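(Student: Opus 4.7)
The strategy is to reduce to a scalar elliptic identity by projecting $Z_\theta$ onto the direction $\omega$, then using the defocusing condition to force the projection to vanish identically, and finally reading off the asymptotic constraint on $\theta$.

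More concretely, let $Z = Z_\theta \in \ZZZ$ and set $v(x) := \omega \cdot Z(x)$. Since $Z \in \Hb$, the function $v$ is a radial element of $\dot{H}^1(\RR^3,\RR)$, and in particular $v \in L^6(\RR^3)$ by Sobolev. Projecting the elliptic equation $-\triangle Z = f(Z)$ onto $\omega$ yields
\begin{equation*}
-\triangle v = \omega \cdot f(Z) \quad \text{in } \mathcal{D}'(\RR^3).
\end{equation*}
The right-hand side lies in $L^{6/5}(\RR^3)$ since $|f(Z)|\lesssim |Z|^5$ and $Z\in L^6$. Moreover, by Corollary \ref{C:regular}, $Z \in \mathcal{C}^4(\RR^3)$, so the identity holds classically as well.

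Next I would multiply by $v$ and integrate. Approximating $v$ by compactly supported test functions in $\dot{H}^1$ (which is legitimate since $v\in \dot{H}^1\cap L^6$ and $-\triangle v\in L^{6/5}$) gives the integrated identity
\begin{equation*}
\int_{\RR^3} |\nabla v|^2\,dx = \int_{\RR^3} (\omega\cdot f(Z))(\omega\cdot Z)\,dx.
\end{equation*}
By the defocusing assumption \eqref{Eqn:CondDefoc}, the integrand on the right is pointwise nonpositive, hence
$\int |\nabla v|^2 \leq 0$. Therefore $\nabla v \equiv 0$, so $v$ is a constant; since $v\in L^6(\RR^3)$, this constant is $0$, i.e.\ $\omega\cdot Z_\theta \equiv 0$ on $\RR^3$.

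Finally, by Proposition \ref{P:Ztheta}(i) (see also Corollary \ref{Corol:ZAsymp}), one has $rZ_\theta(r) \to \theta$ as $r \to \infty$. Multiplying $v\equiv 0$ by $r$ and passing to the limit gives $\omega\cdot\theta = \lim_{r\to\infty} r\,v(r) = 0$, which is the desired conclusion. The argument is short and the only technical point is justifying the integration by parts, which is routine given the $L^6$--$L^{6/5}$ duality and the smoothness provided by Corollary \ref{C:regular}; I do not anticipate any deeper obstacle.
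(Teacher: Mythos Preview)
Your proof is correct and takes a genuinely different, more direct route than the paper's. The paper argues by contradiction via ODE methods: assuming $\theta\cdot\omega>0$, it studies the radial ODE satisfied by $v=Z_\theta\cdot\omega$, uses the sign condition to show $v>0$, $v'<0$, $v''\geq 0$ on $(0,\infty)$, and then derives the differential inequality $r^2|v'(r)|\geq |v'(1)|$ for $r\leq 1$, which contradicts $Z_\theta\in\Hb$. Your argument instead multiplies the projected equation $-\triangle v=\omega\cdot f(Z)$ by $v$ and integrates; the $L^6$--$L^{6/5}$ pairing and density of $C_c^\infty$ in $\dot H^1$ make the integration by parts routine, and the defocusing condition then forces $\nabla v\equiv 0$ directly. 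This is shorter and in fact yields the stronger conclusion $\omega\cdot Z_\theta\equiv 0$ on all of $\RR^3$, from which $\omega\cdot\theta=0$ follows by passing to the limit in $rZ_\theta(r)\to\theta$. The paper's ODE approach, while heavier, has the minor advantage of not relying on Corollary~\ref{C:regular} (though as you note, you do not really need it either: the distributional identity suffices).
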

\begin{exemple}
Consider again the nonlinearity of Example \ref{Ex:mixte}
Then the direction $(1,-1)$ is clearly defocusing. 
As a consequence of Proposition \ref{Prop:DefocDirecProp}, if $Z_{\theta}\in \ZZZ$, then $\theta$ is orthogonal to $(1,-1)$. Thus
$\theta=(\sigma,\sigma)$, where $\sigma\in \RR\setminus \{0\}$. Note that in this case
$Z_{\theta}=\pm (W_{(\lambda)},W_{(\lambda)})$, where $\pm$ is a sign of $\sigma$ and $\sqrt{\lambda 3}=|\sigma|$.

Since we have already proved that $\{\pm (W_{(\lambda)},W_{(\lambda)}),\; \lambda>0\}\subset \ZZZ$, we deduce that $\ZZZ$ is exactly $\{\pm (W_{(\lambda)},W_{(\lambda)}),\; \lambda>0\}$.
\end{exemple}

\begin{proof}[Proof of Proposition \ref{Prop:DefocDirecProp}]
Assume that $u=Z_{\theta}\in \ZZZ$.
Let $v:= u \cdot \omega$. Then from (\ref{Eqn:SemilinearEllipt}) we get
\begin{equation}
- v^{''}  - \frac{2}{r} v^{'} = f(u) \cdot \omega
\label{Eqn:ODEv}
\end{equation}
We also have $v(r) \approx \frac{\theta \cdot \omega}{r}$ and $v^{'}(r) \approx - \frac{\theta \cdot \omega}{r^{2}}$. We may assume WLOG that
$\theta \cdot \omega > 0$. Consequently $v(r) > 0$ and $v^{'}(r) < 0$ for $r \gg 1$. We next prove that these inequalities hold for all $r > 0$. Assume toward a contradiction
that this statement does not hold. Then this means that there exists $r_{0} > 0 $ such that either $(i)$ or $(ii)$ holds with
$(i)$: $v(r_{0}) = 0 $ and $v^{'}(r_{0}) < 0$ ; $(ii)$: $v(r_{0}) > 0 $ and $v^{'}(r_{0}) = 0$; furthermore $v(r) > 0$  and $v^{'}(r) < 0 $
for $r > r_{0}$. Clearly $(i)$ cannot occur. In view of (\ref{Eqn:CondDefoc}) and (\ref{Eqn:ODEv}) we see that $v^{''}(r) \geq 0$ for $r \geq r_{0}$: therefore
$(ii)$ cannot either occur. \\
This shows that $v(r) > 0$ and that $v^{'}(r) < 0 $ for $r > 0$; moreover $v^{''}(r) > 0 $ for $r > 0$. Hence there exist $l \in [-\infty, 0) $ such that $v^{'}(r) \rightarrow l $ as $r \rightarrow 0$. From (\ref{Eqn:CondDefoc}) and (\ref{Eqn:ODEv}) we get $v^{''}(r) \geq -\frac{2}{r} v^{'}(r)$; hence $ \int_{0}^{1} v^{''}(r) dr = + \infty $  and
$l =  - \infty$. Hence $ - \frac{v^{''}}{v^{'}} \geq \frac{2}{r}$,
$ \frac{d}{dr} \left( \log(r^{2}) + \log  (|v^{'}|(r)) \right)  \leq 0 $, and by integration $r^{2}|v^{'}(r)| \geq |v^{'}(1)|$ for $r \leq 1$. Hence $u \notin
\Hb $, which is a contradiction.
\end{proof}

\begin{exemple}
 If $m=2$ and $F(u)=\frac 16|u|^5u_1$, $f(u)=\frac{1}{6}|u|^5 \left(1 , 0\right) + \frac 56|u|^3u_1u$, $F$ has only one maximum point on $\Sb^{1}$, at the point $\left(1,0\right)$. This corresponds to the family of ground state solutions $\left( W_{(\lambda)}, 0\right)$, $\lambda>0$. Since the only solution of the equation $f(u)=u$ is $\left(1,0\right)$,
 these are the only elements of $\ZZZ$ of the form $\omega W_{(\lambda)}$, $\omega\in \RR^m\setminus \{0\}$. We do not know, in this case, if there exist other elements of $\ZZZ$.
\end{exemple}

\section{Proof of the rigidity theorem}
\label{S:rigidity}
In this section we prove Theorem \ref{T:rigidity}.

We only sketch the proof, which is very close to the proof of analogous results in the scalar case. See e.g. \cite[Subsection 2.2]{DuKeMe13}, \cite[Subsection 3.2]{DuKeMe14}. The proof uses the following property of the wave equation:
\begin{prop}
\label{P:channels}
Let $R\geq 0$.
 Let $(u_0,u_1)\in \HHb$ and $u_L(t)$ the corresponding solution of \eqref{eq:LW}. Then
 \begin{multline}
 \label{lower_bound}
  \sum_{\pm}\lim_{t\to\pm \infty} \int_{R+|t|}^{+\infty}\left((\partial_tu_L(t,r))^2+(\partial_ru_L(t,r))^2\right)r^2\,dr\\
 =\int_R^{+\infty} \left( \partial_{r}(ru_0) \right)^2+r^2u_1^2\,dr.
 \end{multline}
\end{prop}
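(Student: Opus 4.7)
My plan is to reduce the identity to an analogous statement for the $1$-dimensional wave equation via the substitution $v(t,r) := r \, u_L(t,r)$, which converts the radial $3$D wave equation into $\partial_{tt} v - \partial_{rr} v = 0$ on $r \geq 0$ with the Dirichlet condition $v(t,0) = 0$. After extending $U_0(r) := r u_0(r)$ and $U_1(r) := r u_1(r)$ oddly to $r < 0$, d'Alembert's formula gives
\begin{equation*}
v(t,r) = a(r-t) + b(r+t), \qquad a' = \tfrac{1}{2}(U_0' - U_1), \quad b' = \tfrac{1}{2}(U_0' + U_1),
\end{equation*}
and $a', b' \in \Lb^2(\RR)$: by the oddness of $U_0$ and an integration by parts (cancelling the $u_0^2$ term against the cross term $2 r u_0 \partial_r u_0$), we get $\int_\RR |U_0'|^2 ds = 2 \int_0^\infty (\partial_r(r u_0))^2 dr$ and $\int_\RR |U_1|^2 ds = 2 \int_0^\infty r^2 u_1^2 dr$, both finite since $(u_0, u_1) \in \HHb$. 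The boundary condition $v(t,0)=0$ translates to the crucial relation $b(s) = -a(-s)$.

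Next, I will derive the pointwise identity
\begin{equation*}
r^2\bigl[|\partial_t u_L|^2 + |\partial_r u_L|^2\bigr] = |\partial_t v|^2 + |\partial_r v|^2 - \partial_r\bigl(r|u_L|^2\bigr)
\end{equation*}
by expanding $\partial_r v = u_L + r \partial_r u_L$, and integrate from $A := R + |t|$ to $\infty$. The boundary contribution at $r = \infty$ vanishes, since the Cauchy--Schwarz bound $r|u_L(t,r)|^2 \leq \int_r^\infty |\partial_s u_L(t,s)|^2 s^2\,ds$ shows that $r|u_L(t,r)|^2 \to 0$ as $r \to \infty$ for each fixed $t$. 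This yields
\begin{equation*}
\int_A^\infty r^2\bigl[|\partial_t u_L|^2 + |\partial_r u_L|^2\bigr]dr = \int_A^\infty \bigl[|\partial_t v|^2 + |\partial_r v|^2\bigr] dr + A\,|u_L(t,A)|^2.
\end{equation*}
From $|\partial_t v|^2 + |\partial_r v|^2 = 2(|a'(r-t)|^2 + |b'(r+t)|^2)$ and straightforward changes of variables, the first term on the right-hand side tends to $2 \int_R^\infty |a'(s)|^2\,ds$ as $t \to +\infty$ and to $2 \int_R^\infty |b'(s)|^2\,ds$ as $t \to -\infty$ (the tail of the other traveling-wave contribution vanishes in each case). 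Summing these two limits and using $2(|a'|^2 + |b'|^2) = |U_0'|^2 + |U_1|^2$ reproduces the right-hand side of the proposition.

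The main obstacle is showing that the boundary term $A\,|u_L(t,A)|^2 = |v(t,A)|^2/A$ tends to $0$ as $|t|\to\infty$, which is not immediate since $a$ and $b$ can a priori grow like $\sqrt{s}$. For $t > 0$, the relation $b(s) = -a(-s)$ gives $v(t, R+t) = a(R) - a(-R-2t)$. Writing $a(R) - a(-R-2t) = \bigl(a(R)-a(-M)\bigr) - \int_{-R-2t}^{-M} a'(s)\,ds$ for any fixed $M > 0$ and applying Cauchy--Schwarz to the integral yields
\begin{equation*}
\frac{|v(t, R+t)|^2}{R+t} \leq \frac{2|a(R) - a(-M)|^2}{R+t} + \frac{2(R+2t)}{R+t}\int_{-\infty}^{-M} |a'(s)|^2\,ds,
\end{equation*}
so $\limsup_{t\to+\infty} |v(t, R+t)|^2/(R+t) \leq 4 \int_{-\infty}^{-M} |a'|^2\,ds$, which can be made arbitrarily small by taking $M$ large (since $a' \in \Lb^2(\RR)$). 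The case $t \to -\infty$ is handled symmetrically via $a(s) = -b(-s)$. Combining the three steps completes the proof.
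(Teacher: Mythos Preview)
Your proof is correct and follows essentially the same approach that the paper invokes: the paper does not give a detailed argument but simply refers to the explicit formula for $u_L$ as in \cite[Lemma 4.2]{DuKeMe13}, which is precisely the $v=ru_L$ reduction to the one-dimensional wave equation and d'Alembert's formula that you carry out. Your handling of the boundary term $A|u_L(t,A)|^2$ via the relation $b(s)=-a(-s)$ and the $\limsup$/Cauchy--Schwarz argument is a clean way to make explicit a step that is typically left implicit in the cited reference.
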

Proposition \ref{P:channels} can be proved directly using the explicit formula for $u_L$, as in the proof of \cite[Lemma 4.2]{DuKeMe13}.

The right-hand side of \eqref{lower_bound} can be compared to the norm in $\HHb$ by a simple integration by parts. Indeed, if $R>0$, we have, for any $f\in \Hb$,
\begin{equation}
\label{IPP}
 \int_{R}^{\infty} \Big|\partial_r(rf(r))\Big|^2dr=\int_{R}^{\infty} \Big|\partial_rf(r)\Big|^2r^2dr-R |f(R)|^2.
\end{equation}
Letting $R\to 0$, we see that the boundary term $R|f(R)|^2$ goes to zero, and \eqref{lower_bound} reads as follows
 \begin{equation}
 \label{lower_bound0}
  \sum_{\pm}\lim_{t\to\pm \infty} \int_{|t|}^{+\infty}|\partial_{t,r}u_L(t,r))|^2
 r^2dr=\int_0^{+\infty} \left(\left|\partial_{r}u_0 \right|^2+|u_1|^2\right)r^2dr.
 \end{equation}
\begin{proof}[Sketch of proof of Theorem \ref{T:rigidity}]
 The proof takes several steps.
\setcounter{step}{0}
\begin{step}
 \label{step:small}
 Let $u$ be as in Theorem \ref{T:rigidity}. Let $\eps>0$ be a small parameter to be specified. In all the proof we fix $R_{\eps}\geq R_0$ such that
 \begin{equation}
  \label{defReps}
  \int_{R_{\eps}}^{+\infty}\left(|\partial_ru_0)|^2+|u_1|^2\right)r^2\,dr\leq \eps^2.
 \end{equation}
 Then
 \begin{equation}
  \label{R4}
  \forall R\geq R_{\eps},\quad
\int_R^{+\infty} \left| \partial_{r}(r u_0) \right|^2+r^2|u_1|^2\,dr\leq CR^5u_0^{10}(R).
 \end{equation}
 This follows from the small data theory (Proposition \ref{P:smalldata}),
 Proposition \ref{P:channels}, the integration by parts formula \eqref{IPP} and the smallness of $\eps$.
\end{step}
\begin{step}
\label{step:limit}
 One can prove that there exists $\theta\in \RR^m$ and $C>0$ such that for large $R$,
 \begin{equation}
  \label{R5}
  \left|u_0(r)-\frac{\theta}{r}\right|\leq \frac{C}{r^3},\quad \int_r^{+\infty}\rho^2|u_1(\rho)|^2\,d\rho\leq \frac{C}{r^5}.
 \end{equation}
 First fix $R$ and $R'$ such that $R_{\eps}\leq R\leq R'\leq 2R$. Letting $\zeta_0(r)=ru_0(r)$, we have, using Cauchy-Schwarz, then Step \ref{step:small}
 \begin{equation}
  \label{R6}
  \left|\zeta_0(R)-\zeta_0(R')\right|\leq \int_{R}^{R'}|\partial_r\zeta_0(r)|\,dr\leq \sqrt{R}\sqrt{\int_R^{R'}|\partial_r \zeta_0|^2dr}\leq \frac{1}{R^2}|\zeta_0|^5(R).
 \end{equation}
The existence of $\theta$ such that \eqref{R5} holds then follows from \eqref{R6}, the fact that $R^{-1}|\zeta_0|(R)^2$ goes to $0$ by radial Sobolev embedding, and elementary real analysis arguments.
\end{step}
\begin{step}
\label{step:ell0}
 In this step, we assume $\theta=0$ and prove that $(u_0,u_1)\equiv (0,0)$. Since by the definition \eqref{defReps} of $R_{\eps}$ and the integration by parts formula \eqref{IPP} one has
 \begin{equation}
  \label{R7}
  \frac{1}{R}\zeta_0^2(R)\leq \eps^2,
 \end{equation}
we deduce from \eqref{R6} that if $R\geq R_{\eps}$ and $k\in \mathbb{N}$,
 $$ \left|\zeta_0\left(2^{k+1}R\right)\right| \geq (1-C\eps^4)\left|\zeta_0\left( 2^kR \right)\right|.$$
 Hence by induction on $k$,
 $$ \left|\zeta_0\left( 2^kR \right)\right| \geq (1-C\eps^4)^{k} |\zeta_0(R)|.$$
 Since by the preceding step and the assumption $\theta=0$, $|\zeta_0(2^kR)|\lesssim 1/(2^kR)^2$, we  deduce, choosing $\eps$ small enough and letting $k\to\infty$ that $\zeta_0(R)=0$. Combining with \eqref{R4} we deduce
 \begin{equation*}
  R\geq R_{\eps}\Longrightarrow \int_{R}^{+\infty} (\partial_r \zeta_0)^2+u_1^2(r)\,dr=0,
 \end{equation*}
 that is $u_0(r)$ and $u_1(r)$ are $0$ for almost every $r\geq R_{\eps}$. Going back to the definition of $R_{\eps}$ we see that we can choose any $R_{\eps}>R_0$, which concludes this step.
\end{step}
\begin{step}
\label{step:Zell}
We next assume $\theta \neq 0$.
We recall
\begin{equation}
 \label{compare}
 \left|Z_{\theta}(r)-\frac{\theta}{r}\right|\lesssim \frac{1}{r^3}
\end{equation}
for large $r$. In this step we prove that $(u_0-Z_{\theta},u_1)$ has compact support. Let $v=u-Z_{\theta}$. Then
\begin{equation}
 \label{eqv}
 \left\{
 \begin{aligned}
\partial_{tt} v-\triangle v&=D_{\lambda}(v)=f(u+v)-f(u).\\
\vec{v}_{\restriction t=0}&=(v_0,v_1):=\left( u_0-Z_{\theta},u_1 \right),
 \end{aligned}
\right.
\end{equation}
For $\eps>0$ small, we fix $R_{\eps}'\gg 1$ such that
\begin{gather}
 \label{defReps1}
 \int_{R_{\eps}'}^{+\infty} \left(|\partial_rv_0(r)|^2+|v_1(r)|^2\right)r^2dr\leq \eps^2\\
 \label{defReps2}
 \int_{\RR} \left( \int_{R_{\eps}'+|t|}^{+\infty} |Z_{\theta}(r)|^{10}r^2\,dr \right)^{\frac 12}dt\leq \eps^{5}.
\end{gather}
Let $v_{L}$ be the solution of \eqref{eq:LW} with initial data $(v_0,v_1)$. Using \eqref{eqv} and the assumptions \eqref{defReps1} and \eqref{defReps2} on $R_{\eps}'$, we obtain
\begin{equation}
\label{f_tf}
\sup_{t\in \RR} \left\|\indic_{\{|x|>|t|+R_{\eps}'\}}\big|\nabla_{t,x}(v(t)-v_L(t))\big|\,\right\|_{\Lb^2}\lesssim \eps^4\Big\|(v_0,v_1)\Big\|_{\HHb(\{r>R'_{\eps}\})}.
\end{equation}
Let $R\geq R_{\eps}'$.
Using Proposition \ref{P:channels},
and the fact that the assumptions of Theorem \ref{T:rigidity} imply
$$\sum_{\pm}\lim_{t\to\pm\infty}\int_{R+|t|}^{+\infty}\left| \partial_{t,r}\big(v(t,r)\big) \right|^2r^2dr=0,$$
we deduce from \eqref{f_tf}
$$\eps^{8}\int_R^{+\infty}\left( |\partial_rv_0|^2+|v_1|^2 \right)r^2dr\gtrsim\int_{R}^{+\infty}\left(\big|\partial_r(rv_0)\big|^2+r^2|v_1|^2\right)dr,$$
and thus
\begin{equation}
\label{boundf0}
\eps^8R|v_0|^2(R)\gtrsim \int_R^{\infty}\left(\big|\partial_r(rv_0)\big|^2+r^2|v_1|^2\right)dr.
\end{equation}
Letting $w_0=rv_0$, we deduce by Cauchy-Schwarz that for $R\geq R_{\eps}'$, $k\in \NN$,
$$\left|w_0\left(2^{k+1}R\right)-w_0\left(2^{k}R\right)\right|\lesssim \int_{2^kR}^{2^{k+1}R} |\partial_r w_0|dr\lesssim \eps^4\left|w_0(2^kR)\right|.$$
This yields by an easy induction $|w_0(2^{k}R)|\geq \left(1-C\eps^4\right)^k|w_0(R)|$, where $C>0$ is a constant which is independent of $\eps$. Since by Step \ref{step:limit},
$$\frac{C}{\left( 2^kR\right)^2}\geq \left|w_0\left( 2^kR \right)\right|,$$
we obtain choosing $\eps$ small enough that $w_0(R)=0$ for large $R$. Combining with \eqref{boundf0}, we deduce that $(f_0(r),f_1(r))=0$ a.e.\,for large $R$, concluding this step.
\end{step}
\begin{step}
\label{step:W}
 In this step we still assume $\ell\neq 0$ and conclude the proof. We let
 $$ \rho=\inf\left\{ R>R_0\;:\; \int_{R}^{+\infty}\left((\partial_r v_0)^2+v_1^2\right)r^2dr=0\right\}$$
 and prove that $\rho=R_0$ i.e.that $u_0(r)=Z_{\theta}(r)$ for $r>R_0$.

 We argue by contradiction, assuming $\rho>R_0$. By the preceding step and finite speed of propagation, the essential support of $v$ is included in $\{r\leq \rho+|t|\}$. Thus $v$ is solution of
\begin{equation*}
 \left\{
 \begin{aligned}
\partial_{tt} v-\triangle v&=\indic_{\{|x|\leq \rho+|t|\}}D_{\lambda}(v).\\
\vec{v}_{\restriction t=0}&=(v_0,v_1):=\left( u_0-Z_{\theta},u_1 \right),
 \end{aligned}
\right.
\end{equation*}
Fix $R_{\eps}''\in (1,\rho)$ such that,
\begin{gather*}
 \int_{R_{\eps}''}^{+\infty} \left(|\partial_rv_0(r)|^2+|v_1(r)|^2\right)r^2dr\leq \eps^2\\
 \int_{\RR} \left( \int_{R_{\eps}''+|t|}^{\rho+|t|} Z_{\theta}^{10}(r)r^2\,dr \right)^{\frac 12}dt\leq \eps^{5}.
\end{gather*}
The same argument as in the preceding step, replacing $R_{\eps}'$ by $R_{\eps}''$, yields that $(v_0,v_1)=0$ for almost every $r>R_{\eps}''$, which contradicts the definition of $\rho$. The proof is complete.
\end{step}
\end{proof}
\begin{corol}
\label{Cor:rigidity}
Let $(u_0,u_1)\in \HHb$. Assume $(u_0,u_1)\notin \left(\ZZZ\cup\{0\}\right)\times \{0\}$. Then there exists $R_0>0$ such that the solution $u$ of \eqref{eq:NLW} with initial data $(u_0,u_1)$ is defined on $\{|x|>R_0+|t|\}$ and satisfies
\begin{gather}
\label{uL5L10_rigi}
u\in \Lb^5\Lb^{10}(\{|x|>R_0+|t|\})\\
 \label{exists_channel}
 \sum_{\pm}\lim_{t\to\pm\infty} \int_{|x|>R_0+|t|}|\nabla_{t,x}u(t,x)|^2dx>0.
\end{gather}
\end{corol}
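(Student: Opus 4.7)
The plan splits the corollary into two parts: establishing existence of the exterior solution with $\Lb^5\Lb^{10}$ bound, and proving strict positivity of the total channel. Existence follows from the small-data theory: since $(u_0,u_1)\in \HHb$, we choose $R_*>0$ large enough that $\|(u_0,u_1)\|_{\HHb(\{|x|>R_*\})}<\delta_0$, and then apply Proposition \ref{P:smalldata} in both time directions on the exterior cone, for every $R\geq R_*$. This produces a global exterior solution $u$ with $u\in \Lb^5\Lb^{10}(\{|x|>R+|t|\})$.

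For the channel lower bound we argue by contradiction: suppose that at every $R_0>0$ where the solution exists globally on $\{|x|>R_0+|t|\}$ with $u\in \Lb^5\Lb^{10}$, the sum $\sum_\pm\lim_{t\to\pm\infty}\int_{|x|>R_0+|t|}|\nabla_{t,x}u|^2\,dx$ vanishes. Applied at $R_0=R_*$, Theorem \ref{T:rigidity} forces $(u_0,u_1)=(Z_\theta,0)$ on $\{|x|>R_*\}$ for some $\theta\in\RR^m$. Whenever the same rigidity is invoked on a smaller exterior cone, the resulting vector must again be this $\theta$, by uniqueness in Proposition \ref{P:Ztheta}(2).

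The main step, and the principal technical obstacle, is to iterate this identification down to the origin. We introduce the set $\mathcal{A}=\{R\geq 0:(u_0,u_1)=(Z_\theta,0)\text{ on }\{|x|>R\}\}$, which is upward-closed and contains $R_*$, and set $R_{\min}=\inf\mathcal{A}$. If $R_{\min}>0$, then $R_{\min}\in\mathcal{A}$ by taking the union, and by uniqueness in the exterior Cauchy problem the solution $u$ on $\{|x|>R_{\min}+|t|\}$ is exactly the stationary profile $Z_\theta$. A direct computation using $|Z_\theta(r)|\lesssim 1/r$ at infinity yields $\|Z_\theta\|_{\Lb^5\Lb^{10}(\{|x|>R_{\min}+|t|\})}<\infty$, so Proposition \ref{P:defR-} extends $u$ to $\{|x|>\rho+|t|\}$ with $\rho<R_{\min}$ and $u\in \Lb^5\Lb^{10}$ there. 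The contradiction hypothesis at $\rho$, combined with Theorem \ref{T:rigidity} and the uniqueness of $\theta$, puts $\rho$ in $\mathcal{A}$, contradicting the minimality of $R_{\min}$.

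Hence $R_{\min}=0$, i.e.\ $(u_0,u_1)=(Z_\theta,0)$ almost everywhere on $\RR^3$. Since $u_0\in \Hb\subset \Lb^6(\RR^3)$, the profile $Z_\theta$ lies in $\Lb^6(\RR^3)$, which rules out cases (a) and (b) of Proposition \ref{P:Ztheta}. So $Z_\theta$ falls into case (c), giving $Z_\theta\in \Hb(\RR^3)$ and $Z_\theta\in\ZZZ\cup\{0\}$. Consequently $(u_0,u_1)\in(\ZZZ\cup\{0\})\times\{0\}$, contradicting the standing hypothesis of the corollary. This contradiction shows that the total channel is strictly positive at some $R_0\geq R_*$, which is the required $R_0$.
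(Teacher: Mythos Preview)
Your proof is correct and follows essentially the same approach as the paper: both use small-data well-posedness to get a starting radius, invoke Theorem \ref{T:rigidity} to identify $(u_0,u_1)$ with $(Z_\theta,0)$ on an exterior region, take the infimum $R_{\min}$ (the paper's $R_1$) of radii where this identification holds, use the $1/r$ bound on $Z_\theta$ together with Proposition \ref{P:defR-} to push strictly below $R_{\min}$, and then conclude via the dichotomy in Proposition \ref{P:Ztheta}. The only difference is packaging: the paper argues constructively (either $R_0=R$ works, or take $R_0$ just below $R_1$), while you wrap the same steps in a global contradiction. One small slip: your final sentence claims the resulting $R_0$ satisfies $R_0\geq R_*$, but the negation of your contradiction hypothesis only yields \emph{some} $R_0>0$ where the solution exists in $\Lb^5\Lb^{10}$ with positive channel; this $R_0$ may well lie below $R_*$ (as in the paper's construction), which is still all the corollary requires.
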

\begin{proof}
We let $R>0$ such that $\|(u_0,u_1)\|_{\HHR}$ is small. Then by the small data well-posedness theory, the solution $u(t,x)$ is well defined for and in $\Lb^{5}\Lb^{10}$ for $|x|>R+|t|$. If
\begin{equation*}
 \sum_{\pm}\lim_{t\to\pm\infty} \int_{|x|>R+|t|}|\nabla_{t,x}u(t,x)|^2dx>0,
\end{equation*}
we let $R_0=R$ and we are done. If not, by Theorem \ref{T:rigidity}, there exists $\theta\in \RR^m$ such that $(u_0,u_1)=(Z_{\theta},0)$ for $|x|>R$. We let
$$R_1=\min \big\{R>R_{\theta},\; \left\|(u_0,u_1)-(Z_{\theta},0)\right\|_{\HHR}=0\big\},$$
(with $R_{\theta}$ defined in Proposition \ref{P:Ztheta}).
Thus $(u_0,u_1)=(Z_{\theta},0)$ for $\{|x|>R_1\}$. By definition $R_1\geq R_{\theta}$. The equality $R_1=R_{\theta}$ is not possible. Indeed, since $(u_0,u_1)\in \HHb$, it would imply that $R_{\theta}=0$ and $(u_0,u_1)\in (\ZZZ\cup \{0\})\times \{0\}$, contradicting the assumptions of the Corollary.

By finite speed of propagation we obtain that $u(t,r)=Z_{\theta}(r)$ for $r>R_1+|t|$. Since $|Z_{\theta}(r)|\lesssim 1/r$ uniformly for $r>R_1$, it is easy to check that $u\in \Lb^5\Lb^{10}(\{|x|>R+|t|\})$. By Proposition \ref{P:defR-}, if $R_0<R_1$ is close enough to $R_1$, then \eqref{uL5L10_rigi} holds. By Theorem \ref{T:rigidity} and the definition of $R_1$, we also have \eqref{exists_channel}, which concludes the proof.
\end{proof}

\section{Soliton resolution along a sequence of times}
\label{S:sequence_resolution}

In this Section we prove Proposition \ref{P:sequence_resolution} and Theorem \ref{T:sequence_resolution}. As announced in the introduction, we only consider the case $T_+(u)=+\infty$.
\begin{proof}[Proof of Proposition \ref{P:sequence_resolution}]
\emph{Step 1. } We first prove, fixing $A\in \RR$, that
$$u\in \Lb^{5}\Lb^{10}\big(\big\{(t,x),\; t>0,\;|x|>t-A\big\}\big).$$
We let $(t_n)_n$ such that $t_n\to\infty$ and
$$\limsup_{n\to\infty}\|\vec{u}(t_n)\|_{\HHb}<\infty.$$
We will prove that for large $n$,
\begin{equation*}
u\in \Lb^{5}\Lb^{10}\big(\big\{(t,x),\; t>t_n,\;|x|>t-A\big\}\big),
\end{equation*}
i.e.
\begin{equation}
\label{L5L10tn}
u(t_n+\cdot)\in \Lb^{5}\Lb^{10}\big(\big\{(\tau,x),\; \tau>0,\;|x|>\tau+t_n-A\big\}\big).
\end{equation}
We argue by contradiction, assuming that there exists a subsequence of $(t_n)_n$ (still denoted by $(t_n)_n$) such that \eqref{L5L10tn} does not hold for any $n$. Extracting subsequences again, we can assume that $\vec{u}(t_n)$ has a profile decomposition as in Subsection \ref{sub:profile}. With the same notations as in Subsection \ref{sub:profile}, it is sufficient to prove (in view of Theorem \ref{T:approx} with $R_n=t_n-A$) that for all $j\in \JJJ^0$,
\begin{gather}
\label{condition1Uj}
 \lim_{n\to\infty} \frac{t_n}{\lambda_{j,n}}=+\infty \Longrightarrow  U^j\in \Lb^5\Lb^{10}(\{|x|>|t|\})\\
\label{condition2Uj}
 \lim_{n\to\infty} \frac{t_n}{\lambda_{j,n}}=R_0\in (0,\infty)\Longrightarrow \exists \eps>0,\; U^j\in \Lb^5\Lb^{10}(\{|x|>|t|+R_0-\eps\})
\end{gather}
We recall that $\vec{U}^j(0)$ satisfies
$$\vec{U}^j(0)=\wlim_{n\to\infty} \left(\lambda_{j,n}^{1/2} u(t_n,\lambda_{j,n}\cdot), \lambda_{j,n}^{3/2}\partial_tu(t_n,\lambda_{j,n}\cdot)\right).$$
Also, by finite speed of propagation and small data theory,
\begin{equation*}
 \lim_{B\to\infty} \limsup_{t\to\infty}\int_{|x|\geq t+B} |\nabla_{t,x}u(t,x)|^2dx=0
\end{equation*}
This shows that if $\lim_{n\to\infty}\frac{t_n}{\lambda_{j,n}}=+\infty$, then $U^j$ is identically $0$. Hence \eqref{condition1Uj}. Also, if $\lim_{n\to\infty} \frac{t_n}{\lambda_{j,n}}=R_0$, then the support of $\vec{U}^j(0)$ is included in $\{|x|\leq R_0\}$, which shows \eqref{condition2Uj}, concluding Step $1$.

\medskip

\noindent\emph{Step 2. Conclusion of the proof.}
We first fix $A\in \RR$. By Step 1 and the assumption \eqref{H1}, we have $f(u)\indic_{\{|x|\geq t-A\}}\in \Lb^1\Lb^2([0,\infty)\times \RR^3)$. Let $u^A$ be the solution of
\begin{equation}
 \label{NLWA}
 \partial_{tt}u^A-\triangle u^A=f(u)\indic_{|x|>t-A},\quad \vec{u}_{t=0}=(u_0,u_1).
\end{equation}
By finite speed of propagation
\begin{equation}
 \label{finite_speed}
 u(t,x)=u^A(t,x),\quad |x|>t-A.
\end{equation}
By standard energy estimates, one proves that $\vec{S}_L(-t)\vec{u}^A(t)$ satisfies the Cauchy criterion in $\HHb$, and thus has a limit, in $\HHb$, as $t\to\infty$. Thus
there exists a solution $v_L$ of \eqref{eq:LW} such that
\begin{equation}
\label{defvLA}
 \lim_{t\to\infty}\int \left|\nabla_{t,x}(u^A-v_L^A)(t,x)\right|^2dx=0.
\end{equation}

For any $A\in \RR$, we let $g^A$ be the element of $\Lb^2(\RR)$, given by Lemma \ref{L:radiative}, i.e. such that \eqref{radiative} holds with $v_L=v_L^A$ and $g=g^A$. Let $(A,B)\in \RR^2$ with $A>B$. Since by \eqref{finite_speed}, $u^A(t,r)=u^B(t,r)=u(t,r)$ if $A>B$, $r>t-B$, we see that  $g^A(\eta)=g^B(\eta)$ for $\eta<B<A$.Also
\begin{equation}
 \label{bound_g}
 \int_{-\infty}^A (g^A(\eta))^2d\eta\leq M=\liminf_{t\to\infty} \|\partial_tu(t)\|^2_{L^2}
\end{equation}
by \eqref{defvLA}. We define $g(\eta)$ as the common value of all $g^A(\eta)$ with $\eta<A$. By \eqref{bound_g}, we obtain that $g\in L^2(\RR)$. Letting $v_L=S_L(t)(v_0,v_1)$ be the unique solution of \eqref{eq:LW} such that \eqref{radiative} holds, we obtain, by \eqref{finite_speed}, \eqref{defvLA} and the definition of $v_L$ that the conclusion of the proposition holds.
\end{proof}
We next prove Theorem \ref{T:sequence_resolution} in the case $T_+(u)=+\infty$. We let $u$ be a solution of \eqref{eq:NLW}, defined for all $t\geq 0$, with initial data $(u_0,u_1)\in \HHb$ at $t=0$. We consider $t_n\to\infty$ with
$$\lim_{n\to\infty}\|\vec{u}(t_n)\|_{\HHb}<\infty.$$
Extracting subsequences, we can assume that $\vec{u}(t_n)$ has a profile decomposition as in Subsection \ref{sub:profile}. We note also that $v_L$ is (up to transformation) one of the profiles of this profile decomposition, with scaling parameter $1$ and time parameter $-t_n$. Reordering the profiles, we will assume that $v_L$ corresponds to the profile with index $0$, i.e. $U^0_L=v_L$, $\lambda_{0,n}=1$ and $t_{0,n}=-t_n$.

Our goal is to prove that the only nonzero profiles $U^j_{n}$ for $j\geq 1$ satisfy $t_{j,n}=0$ and are stationary solutions of  \eqref{eq:NLW}. The proof is based on the channels of energy method, introduced in \cite{DuKeMe11a}, \cite{DuKeMe12}. For this, we will argue by contradiction, expanding $u$ for $|x|>R_n+|t-t_n|$ (for some $R_n\geq 0$ to be specified) by Theorem \ref{T:approx}, and reaching a contradiction with Corollary \ref{Cor:rigidity}.

By the small data theory and the Pythagorean expansion \eqref{pythagore}, if $j\in \JJJ^0$, then $U^j$ is well defined for $|x|>|t|$ and satisfies $U^j\in \Lb^5\Lb^{10}(\{|x|>|t|\})$, except for a finite number $J_0$ of indices.

\setcounter{step}{0}

\begin{step}
We first prove that $J_0=0$. We argue by contradiction, assuming $J_0\geq 1$. Reordering the profiles, we can assume that for $1\leq j\leq J_0$, $j\in \JJJ^0$ and $U^j$ is not defined for $|x|>|t|$ or satisfies $U^j\notin \Lb^5\Lb^{10}(\{|x|>|t|\})$, and that
\begin{equation}
 \label{order_lambda_jn}
\lambda_{1,n}\ll \ldots \ll \lambda_{J_0,n}.
 \end{equation}
We first observe that $\vec{U}^{J_0}(0)\notin \ZZZ\times \{0\}$. Indeed, from the asymptotics of the elements of $\ZZZ$ (see Corollary \ref{Corol:ZAsymp}), we would obtain $U^{J_0}\in \Lb^{5}\Lb^{10}(\{|x|>|t|\})$, a contradiction with the definition of $J_0$.

By Corollary \ref{Cor:rigidity}, we obtain that there exists $R_0>0$ such that the solution $U^{J_0}$ is well defined for $\{|x|>R_0+|t|\}$, $U^{J_0}\in \Lb^{5}\Lb^{10}(\{|x|>R_0+|t|\})$, and
\begin{equation}
 \label{channel}
 \sum_{\pm}\lim_{t\to\pm\infty} \int_{|x|>R_0+|t|} |\nabla_{t,x}U^{J_0}(t,x)|^2dx=c>0.
\end{equation}
We then apply Theorem \ref{T:approx} with $R_n=R_0\lambda_{J_0,n}$, forward in time and backward in time. By \eqref{order_lambda_jn}, we see that the assumptions of this theorem are satisfied. In the notations of Lemma \ref{L:channel}, we see that $\rho_{J_0,n}=-R_0$, and that \eqref{channel} implies
$$ \int_{-\infty}^{-R_0} \left|g^{J_0}_+(\eta)\right|^2d\eta\geq \frac{c}{2}\quad\text{or}\quad\int_{-\infty}^{-R_0} \left|g^{J_0}_-(\eta)\right|^2d\eta\geq \frac{c}{2},$$
where $g^{J_0}_{\pm}$ is by definition such that
\begin{multline*}
 \lim_{t\to\pm \infty}\int_{0}^{\infty} \big|\partial_tU^{J_0}(t,r)-\frac{1}{r}g_{\pm}^{J_0}(\pm t-r)\big|^2r^2dr\\
 =\lim_{t\to\infty}\int_{0}^{\infty} \big|\partial_rU^{J_0}(t,r)+\frac{1}{r}g_{\pm}^{J_0}(\pm t-r)\big|^2r^2dr=0,
\end{multline*}
Using \eqref{limE_n}, we see that for large $n$, the following bound holds for one of the signs $+$ or $-$
$$\lim_{\tau \to\pm \infty} \int_{|x|>\lambda_{J_0,n}R_0+|\tau|} \left|\nabla_{t,x}\big(u(t_n+\tau,x)-v^L(t_n+\tau,x)\big)\right|^2dx\geq \frac{c}{2}.$$
This is a contradiction with the definition of $v^L$ if the sign is $+$, or with the fact that
$$ \lim_{t\to-\infty}\int_{|x|\geq A+|t|} |\nabla_{t,x}(u-v_L)|^2dx\underset{A\to\infty}{\longrightarrow} 0$$
if the sign is $-$.
\end{step}
\begin{step}
\label{St:zero_prof}
 Next, we prove that for all $j\geq 1$, we have $U^j_L\equiv 0$, or $j\in \JJJ^0$, there exists $Z_j \in \ZZZ$ such that $(U^j,0)=(Z_j,0)$. Indeed, if not, we would combine Theorem \ref{T:approx} with $R_n=0$, Lemma \ref{L:channel} and Corollary \ref{Cor:rigidity}, and argue in the preceding step to prove that there exists $c'>0$ such that for large $n$,
$$\sum_{\pm}\lim_{\tau \to\pm \infty} \int_{|x|>|\tau|} \left|\nabla_{t,x}\big(u(t_n+\tau,x)-v^L(t_n+\tau,x)\big)\right|^2dx\geq c'.$$
 \end{step}
\begin{step}
 By Step \ref{St:zero_prof}, there is only a finite number $J_1$ of nonzero profile in the expansion of $\vec{u}(t_n)$. Thus $w^J_n$ is independent of $J$ for $J\geq J_1$. Using the same argument as in Step \ref{St:zero_prof}, together with \eqref{lower_bound0}, we obtain
 $$ \lim_{n\to\infty} \left\|\vec{w}^{J_1}_n(0)\right\|_{\HHb}=0.$$
 This concludes the proof.
\end{step}

\section{Boundedness of the energy norm of the solution for potential-type nonlinearities}
\label{S:bounded}

In this Section, we prove Proposition \ref{P:bounded}. We will assume that the nonlinearity is potential-like, i.e. that it satisfies Assumption \ref{H0} in the introduction. Note that since the potential $F(u)$ is homogeneous of degree $6$, we have
\begin{equation}
u\cdot f(u)=u \cdot (\nabla_u F)(u) = 6 F(u)
\label{Eqn:HomFGrad}
\end{equation}
We start by proving that for potential-like nonlinearities, global solutions are bounded along a sequence of times.
\begin{lem}
\label{L:boundedness}
Let $(u_{0},u_{1}) \in \HHb $. Let $u$ be a solution of (\ref{eq:NLW}) with data $(u_{0},u_{1})$ at $t=0$. Assume that $T_{+}(u) = +\infty$. Then $E(u) \geq 0$ and
\begin{equation}
\liminf_{t \rightarrow \infty} \left\| \nabla u(t) \right\|^{2}_{L^{2}} + \left\| \partial_{t} u(t) \right\|^{2}_{L^{2}} \leq 3E(u) \cdot
\nonumber
\end{equation}
\end{lem}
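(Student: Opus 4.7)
The plan is to argue by contradiction via a Levine-type convexity argument for a truncated $L^{2}$-virial functional. Since $K(t) := \|\nabla u(t)\|_{L^{2}}^{2} + \|\partial_{t}u(t)\|_{L^{2}}^{2}$ is nonnegative, proving $\liminf K(t) \le 3E(u)$ automatically yields $E(u) \ge 0$, so it suffices to prove the liminf bound. Suppose for contradiction that there exist $\delta > 0$ and $T_{0} \ge 0$ with $K(t) \ge 3E(u) + \delta$ for all $t \ge T_{0}$.

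Since radial $\dot{H}^{1}(\mathbb{R}^{3})$ functions need not lie in $L^{2}$, the natural approach is to introduce a truncated $L^{2}$-functional along an expanding light cone. Fix $R_{0}$ so large that $\epsilon := \|(u_{0},u_{1})\|_{\HHb(|x|>R_{0})}$ is as small as needed; by Proposition \ref{P:smalldata} combined with finite speed of propagation, $\|\vec{u}(t)\|_{\HHb(|x|>R_{0}+t)} \le C\epsilon$ uniformly in $t$. Let $\chi_{t}$ be a smooth radial cutoff equal to $1$ on $\{|x| \le R_{0}+t\}$ and $0$ on $\{|x| \ge R_{0}+t+1\}$, and set $Y(t) := \tfrac{1}{2}\int \chi_{t}(x)|u(t,x)|^{2}\,dx$. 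The radial pointwise bound $|u(t,r)|^{2}\le \|\nabla u(t)\|_{L^{2}}^{2}/(4\pi r)$ makes $Y(t)$ finite for every $t$. A direct computation using $\partial_{tt}u=\Delta u+f(u)$, integration by parts, the identity $u\cdot f(u)=6F(u)$ and energy conservation $\int F(u)=\tfrac{1}{2}K(t)-E(u)$ yields
\[
Y''(t) = 4\|\partial_{t}u(t)\|_{L^{2}}^{2} + 2\|\nabla u(t)\|_{L^{2}}^{2} - 6E(u) + \mathcal{E}(t),
\]
where the error $\mathcal{E}(t)$ is supported on the shell $\{R_{0}+t\le|x|\le R_{0}+t+1\}$.

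The heart of the argument is then a Levine-type differential inequality. Applying Cauchy--Schwarz in the form $(Y'(t))^{2}\le (2+O(\epsilon))Y(t)\|\partial_{t}u(t)\|_{L^{2}}^{2}$, splitting $4\|\partial_{t}u\|^{2}=(2+\alpha)\|\partial_{t}u\|^{2}+(2-\alpha)\|\partial_{t}u\|^{2}$ for a small $\alpha>0$, using the contrary assumption $K\ge 3E(u)+\delta$ and choosing $\alpha$ so that $(2-\alpha)\delta-3\alpha E(u)>0$, one obtains constants $\gamma,c>0$ such that
\[
Y(t)\,Y''(t)\ge (1+\gamma)(Y'(t))^{2}+c\,Y(t), \quad t\ge T_{0}.
\]
Setting $Z:=Y^{-\gamma}$, this gives $Z''\le -\gamma c\,Y^{-\gamma-1}<0$, so $Z$ is strictly concave and positive on $[T_{0},\infty)$. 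Since $Y''\ge c$ forces $Y$ to grow at least quadratically, $Y'$ becomes positive at some time, hence $Z'<0$ eventually, and strict concavity forces $Z$ to reach $0$ in finite time. Consequently $Y(t)\to+\infty$ at some finite $t^{\ast}$, contradicting $Y(t)<\infty$ for every $t\ge 0$.

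The main obstacle is the uniform-in-time control of the error $\mathcal{E}(t)$. The boundary terms coming from $\partial_{t}\chi_{t}$, $\partial_{t}^{2}\chi_{t}$ and $\nabla\chi_{t}$ are readily controlled using the exterior smallness $\|\vec{u}(t)\|_{\HHb(|x|>R_{0}+t)}\le C\epsilon$ and the pointwise radial Hardy bound; the more delicate piece is the $L^{6}$-contribution $6\int_{|x|>R_{0}+t}F(u)$, which requires an exterior Sobolev estimate applied to $u\psi$ with $\psi$ a smooth cutoff adapted to the light cone, reduced once more via finite speed of propagation to the smallness of $\|(u_{0},u_{1})\|_{\HHb(|x|>R_{0})}$.
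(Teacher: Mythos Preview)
Your overall strategy is exactly the one in the paper: a Levine-type convexity argument applied to a truncated $L^2$ functional, the truncation being forced because $u(t)\notin L^2$ in general. The identity $u\cdot f(u)=6F(u)$, the use of energy conservation to rewrite the virial identity, and the concluding $(Y^{-\gamma})''<0$ blow-up mechanism are all the same. The difference is your choice of cutoff, and that is where a genuine gap appears.

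You take $\chi_t$ equal to $1$ on $\{|x|\le R_0+t\}$ and $0$ on $\{|x|\ge R_0+t+1\}$, i.e.\ a \emph{translating} cutoff with \emph{fixed} width~$1$. Then $\partial_t\chi_t$ and $\partial_{tt}\chi_t$ are both $O(1)$ on the shell $\{R_0+t\le|x|\le R_0+t+1\}$, and the radial Sobolev bound gives only $|u(t,r)|^2\le C\epsilon^2/r$ there, so
\[
\int(\partial_{tt}\chi_t)|u|^2\;\lesssim\;\int_{R_0+t}^{R_0+t+1}\frac{\epsilon^2}{r}\,r^2\,dr\;\sim\;\epsilon^2\,t,
\]
and similarly $\frac12\int(\partial_t\chi_t)|u|^2\sim\epsilon^2 t$. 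These terms are part of your error $\mathcal{E}(t)$ in $Y''$ and of the error in $Y'$; they \emph{grow linearly in $t$} and are therefore not ``readily controlled'' by the exterior smallness as you claim. Since the main term $4\|\partial_tu\|^2+2\|\nabla u\|^2-6E$ is merely bounded below by a constant under your hypothesis, the growing error eventually dominates and you lose both $Y''\ge c$ and the differential inequality $YY''\ge(1+\gamma)(Y')^2+cY$. The argument then collapses.

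The paper avoids this by taking a \emph{self-similar} cutoff $\phi(x/t)$ (supported in $\{|x|\le 3t\}$, equal to $1$ on $\{|x|\le 2t\}$). The point is that $\partial_t[\phi(x/t)]=O(1/t)$ and $\partial_{tt}[\phi(x/t)]=O(1/t^2)$ on the support, while the shell now has width $\sim t$. Combined with Hardy's inequality outside $\{|x|\le 2t\}$ and the fact that for every $\eta>0$ one can choose $R(\eta)$ so that the exterior energy on $\{|x|\ge R(\eta)+t\}$ is $\le\eta$, one gets that the boundary contribution to $y'$ is $o(t)$ and to $y''$ is $o(1)$ as $t\to\infty$. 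With these genuine $o(1)$ errors the Levine inequality $y\,y''\ge\gamma(y')^2$ with $\gamma>1$ goes through. Your argument is easily repaired by replacing $\chi_t$ with a cutoff of the form $\chi(|x|/(R_0+t))$ (or simply $\phi(x/t)$ as in the paper), after which the rest of your outline is correct.
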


\begin{proof}

We adapt the proof of the analogous result in the scalar case in \cite{DuKeMe13}, which is a modification of the proof of blow-up for negative energy solutions in \cite{Levine74}. 

Assume by contradiction that this is not true. Then either  $(i)$: $E(u) < 0 $  holds or $(ii)$: $ E (u) \geq 0 $ and there exists
$ 1 \gg \epsilon_{0} > 0 $ such that, for $t \gg 1$,
\begin{equation}
\begin{array}{l}
\left\| \nabla u(t) \right\|^{2}_{L^{2}} + \left\| \partial_{t} u(t) \right\|^{2}_{L^{2}} > (3+\epsilon_0)E (u_{0},u_{1}) + \epsilon_{0}
\end{array}
\label{Eqn:HypContr}
\end{equation}
Let $y(t) := \int_{\mathbb{R}^{3}} \phi  \left( \frac{x}{t} \right) |u(t,x)|^{2} \,dx$ with $\phi$ a radial, smooth, and compactly supported function such
that $\phi(x) = 1 $ if $|x| \leq 2$ and $\phi(x) =0$ if $|x| \geq 3$. We have

\begin{equation}
\begin{array}{l}
y^{'}(t) = A_{1} + A_{2} = \int_{\mathbb{R}^{3}} \langle u, \partial_{t} u \rangle \phi \left( \frac{x}{t} \right) \,dx - \frac{1}{t^{2}}
\int_{\mathbb{R}^{3}} |u|^{2} x \cdot \nabla \phi \left( \frac{x}{t} \right) \,dx
\end{array}
\nonumber
\end{equation}
Our goal is to control the second term. \\
\\
\underline{Claim}: Let $\epsilon > 0 $. Then there exists $ R \gg 1$ such that for all $t\geq 0$,
\begin{equation}
\int_{|x| \geq R + t}  | \nabla u(t,x)|^{2} + | \partial_{t} u(t,x)|^{2} \,dx \leq \epsilon \cdot
\label{Eqn:EstOutCone}
\end{equation}
Indeed let $R \gg 1$ be such that $X \ll \epsilon^{2} $ with $X := \int_{|x| \geq R} | \nabla u_{0} |^{2} + |u_{1}|^{2} \;  dx  $. Let $\tilde{u}$ be the solution of the wave equation $\partial_{tt} \tilde{u} - \triangle \tilde{u} = f(\tilde{u})$ with initial data $ \left( \tilde{u}(0), \partial \tilde{u}(0) \right) := \Psi_{R}(u_{0},u_{1})$,
with $\Psi_{R}$ defined in the proof of Corollary \ref{Cor:UniquenessElliptic}.  From $ X = \left\| \left( \tilde{u}(0), \partial_t \tilde{u}(0) \right)
\right\|_{\HHb} \ll \epsilon $, the small data theory of (\ref{eq:NLW}) applied to $\tilde{u}$, and the finite speed of propagation we get (\ref{Eqn:EstOutCone}). \\
\\
From the Hardy inequality outside the ball $B \left( O , 2 |t| \right)$ and the claim above we get $A_{2} = o(t)$. Hence

\begin{equation}
\begin{array}{l}
| y^{'}(t) | \leq 2 \int_{|x| \le 2 t} |u(t)| |\partial_{t} u(t)| \,dx + o(t) \cdot
\end{array}
\label{Eqn:Estypr}
\end{equation}
Integration by parts, the equality (\ref{Eqn:HomFGrad}) and the same argument as above to control the remainder terms show that

\begin{equation*}
y^{''}(t) = 2 \int_{\mathbb{R}^{3}} | \partial_{t} u(t)|^{2} - |\nabla u(t) |^{2} \,dx  + 12 \int_{\mathbb{R}^{3}} F(u) \,dx + o(1)
\end{equation*}
Now from the conservation of energy we get

\begin{equation}
y^{''}(t) = 8 \| \partial_{t} u \|^{2}_{L^{2}} + 4 \| \nabla u(t) \|^{2}_{L^{2}} - 12  E(u) +o(1).
\label{Eqn:EqDbpry}
\end{equation}
Assume that $(ii)$ holds. Then the above estimate and (\ref{Eqn:HypContr}) imply that for large $t$
\begin{equation}
\begin{array}{l}
y^{''}(t) \geq\left( 4 +\epsilon_0 \right) \| \partial_{t} u \|^{2}_{L^{2}}  + \epsilon_0.
\end{array}
\label{Eqn:EstDbpry}
\end{equation}
Assume now that $(i)$ holds. Then it is clear from (\ref{Eqn:EqDbpry}) that (\ref{Eqn:EstDbpry}) also holds. Hence by integration there exists $\alpha > 0 $ such that
$y^{'}(t) \geq \alpha t$ for all $ t \gg 1$. As a consequence, using also (\ref{Eqn:Estypr}) and  the Cauchy-Schwarz inequality we get that for $t\gg 1$,
\begin{equation}
\begin{array}{ll}
y^{'}(t) & \leq (2+\eps_0/100) \| u(t) \|_{L^{2} (|x| \leq 2 t)}
 \| \partial_{t} u(t) \|_{L^{2}(|x| \leq 2 t)}  \\
& \leq \left( 2 +\epsilon_0/100 \right) y^{\frac{1}{2}}(t) \left( \frac{y^{''}(t)}{4 +\epsilon_0} \right)^{\frac{1}{2}}
\end{array}
\nonumber
\end{equation}
So there exists $ \gamma > 1 $ such that  $\gamma (y^{'})^{2} \leq y y^{''}$ for $t \gg 1$. This implies that
$\frac{d}{dt} \log \left( \frac{y^{'}}{y^{\gamma}} \right) (t) \geq 0 $. Hence choosing $t_{1} \gg 1 $ we get
$\frac{y^{'}}{y^{\gamma}}(t) \geq \frac{y^{'}}{y^{\gamma}}(t_{1}) > 0 $ for $t \geq t_{1}$.
Hence  $ \frac{d}{dt} \left( \frac{1}{y^{\gamma -1}} \right)(t) \leq  (1 - \gamma ) \frac{y^{'}(t_{1})}{y^{\gamma}(t_{1})}<0  $ for $t \geq t_{1}$: this contradicts
the non-negativity of $y(t)$ for $t \gg 1$.
\end{proof}

We next prove Proposition \ref{P:bounded} as a corollary of Lemma \ref{L:boundedness} and Theorem \ref{T:sequence_resolution}. We assume again \eqref{H0}.
We let $v_L$ be as in the conclusion of Theorem \ref{T:sequence_resolution}, and $(v_0,v_1)$ be the initial data of $v_L$. We claim that for any sequence $t_n\to\infty$:
\begin{equation}
\label{bound_explicit}
 \limsup_{n\to\infty}\|\vec{u}(t_n)\|_{\HHb}<\infty\Longrightarrow \limsup_{n\to\infty}\|\vec{u}(t_n)\|_{\HHb}^2\leq 3E(u).
\end{equation}
Assuming \eqref{bound_explicit}, we see that Lemma \ref{L:boundedness} implies the desired conclusion
$$ \limsup_{t\to\infty}\|\vec{u}(t)\|_{\HHb}^2\leq 3E(u),$$
by a straightforward contradiction argument using the intermediate value theorem. To prove \eqref{bound_explicit}, we argue by contradiction, assuming that there exists a sequence $t_n\to\infty$ such that
\begin{equation}
\label{contradiction_explicit}
 \limsup_{n\to\infty}\|\vec{u}(t_n)\|_{\HHb}<\infty\text{ and }\limsup_{n\to\infty}\|\vec{u}(t_n)\|_{\HHb}^2>3E(u).
\end{equation}
By Theorem \ref{T:sequence_resolution}, there exist a subsequence of $(t_n)_n$ (that we still denote by $(t_n)_n$), $J\geq 0$, and, for all $j\in \{1,\ldots,J\}$, a sequence of positive numbers $(\lambda_{j,n})_{n}$ and an element $Q_j$ of $\ZZZ$ such that \eqref{Eqn:OrthCondLambda} and \eqref{expansion_seq} hold. As a consequence of these two properties, we obtain (arguing as in Remark \ref{R:energy})
\begin{equation}
 \label{expansion1}
 E(u)=\sum_{j=1}^JE(Q_j)+\frac{1}{2}\|(v_0,v_1)\|^2_{\HHb}=
\frac{1}{3}\sum_{j=1}^J\|Q_j\|^2_{\Hb}+\frac{1}{2}\|(v_0,v_1)\|^2_{\HHb},
\end{equation}
where we have used the identity \eqref{I:Z1} of Proposition \ref{Prop:PropZ}. By a similar argument, we obtain
\begin{equation}
 \label{expansion2}
 \lim_{n\to\infty} \|\vec{u}(t_n)\|^2_{\HHb}= \sum_{j=1}^J\|Q_j\|^2_{\Hb}+\|(v_0,v_1)\|^2_{\HHb}.
\end{equation}
Combining \eqref{expansion1} and \eqref{expansion2}, we obtain
$$\lim_{n\to\infty} \|\vec{u}(t_n)\|^2_{\HHb}\leq 3E(u),$$
contradicting \eqref{contradiction_explicit} and concluding the proof of Proposition \ref{P:bounded}.
\qed

\section{Continuous-in-time soliton resolution}
\label{S:continuous_resolution}

In this section we prove the continuous-in-time
resolution into stationary solutions. We assume \eqref{H0}, \eqref{Eqn:AssFinite} and \eqref{Eqn:AssEj} throughout the section.

 Recall that by the Pohozaev identity, one has that $\int_{\mathbb{R}^{3}} |\nabla Q|^2 =6 \int_{\mathbb{R}^{3}} F(Q)$ for all $Q\in \mathcal{Z}$. Thus \eqref{H0} and (\ref{Eqn:HomFGrad}) imply
 that
 \begin{equation}
  \label{SR10}
  \forall Q \in \ZZZ, \quad \frac{1}{3} \int_{\mathbb{R}^{3}} |\nabla Q|^2 \,dx = 2 \int_{\mathbb{R}^{3}} F(Q) \,dx = E(Q).
 \end{equation}
 We define

\begin{equation}
\label{SR11}
K = \left\{ Q \in \ZZZ,\quad \int_{|x| \leq 1}  |\nabla Q|^{2} \,dx =  \frac{3 E(Q)}{2} \right\}.
\end{equation}

We first prove that $K$ is compact (\S \ref{sub:K}). In \S \ref{sub:compact}, we state and prove a simple preliminary but crucial result on compact sets.
In \S \ref{sub:SameOrder}, we prove that the number of profiles and the energy of the profiles is the same in all asymptotic decompositions of the form \eqref{expansion_seq}.

 \subsection{Compactness of $K$}
\label{sub:K}

In this subsection, we prove that:

\begin{prop}

Assume \eqref{H0},\eqref{Eqn:AssFinite} and \eqref{Eqn:AssEj}. Then $K$, defined by (\ref{SR11}),  is compact in $\Hb$.

\label{Prop:CompactnessK}
\end{prop}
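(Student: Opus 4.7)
The plan is to take an arbitrary sequence $(Q_n)_n \subset K$ and extract a strongly convergent subsequence whose limit lies in $K$. First, by \eqref{Eqn:AssFinite} the set $E(\ZZZ) = \{E_1, \ldots, E_p\}$ is finite, so after extraction one may assume $E(Q_n) = E_i$ for all $n$, for some fixed $i$. The identity $\|\nabla Q\|_{\Lb^2}^2 = 3E(Q)$ from \eqref{SR10} shows that $(Q_n)_n$ is bounded in $\Hb$, so I would apply a G\'erard-type elliptic profile decomposition for bounded radial sequences in $\Hb$: there exist scales $\lambda_{j,n} > 0$ with $\lambda_{j,n}/\lambda_{k,n} + \lambda_{k,n}/\lambda_{j,n} \to \infty$ for $j \neq k$, radial profiles $V^j \in \Hb$, and remainders $R_n^J$ such that
\[
Q_n(x) = \sum_{j=1}^J \lambda_{j,n}^{-1/2} V^j(x/\lambda_{j,n}) + R_n^J(x),\qquad \lim_{J\to\infty}\limsup_n \|R_n^J\|_{\Lb^6}=0,
\]
together with the Pythagorean expansion $\|\nabla Q_n\|_{\Lb^2}^2 = \sum_{j=1}^J \|\nabla V^j\|_{\Lb^2}^2 + \|\nabla R_n^J\|_{\Lb^2}^2 + o_n(1)$. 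Each $V^j$ is a weak $\Hb$-limit of $\lambda_{j,n}^{1/2} Q_n(\lambda_{j,n}\,\cdot)$, which by scale invariance of the equation still solves $-\triangle u = f(u)$; hence by point \eqref{I:Z3} of Proposition \ref{Prop:PropZ}, $V^j \in \ZZZ \cup \{0\}$, and by point \eqref{I:Z2} only finitely many profiles are nonzero.

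The next step is to use \eqref{Eqn:AssEj} to reduce to a single nonzero profile with vanishing remainder. Standard orthogonality arguments based on the pointwise bound $|F(u+v) - F(u) - F(v)| \lesssim |u|^5|v| + |u||v|^5$, the scale-orthogonality of the profiles, and the $\Lb^6$ smallness of $R_n^J$ yield the energy decoupling
\[
\int F(Q_n)\,dx = \sum_{j=1}^J \int F(V^j)\,dx + o_n(1) + o_J(1).
\]
Combined with the Pythagorean expansion, the identity $\|\nabla V^j\|_{\Lb^2}^2 = 3E(V^j)$ from \eqref{SR10}, and setting $\rho := \lim_{J\to\infty}\limsup_n \|\nabla R_n^J\|_{\Lb^2}^2 \geq 0$, this produces the pair
\[
3 E_i = 3\sum_{j} E(V^j) + \rho,\qquad E_i = \sum_j E(V^j) + \tfrac{1}{2}\rho,
\]
which together force $\rho = 0$ and $\sum_j E(V^j) = E_i$. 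Since each nonzero $E(V^j)$ belongs to $\{E_1, \ldots, E_p\}$, assumption \eqref{Eqn:AssEj} leaves exactly one nonzero profile, say $V^1$, with $E(V^1) = E_i$, and consequently $\|\nabla R_n^1\|_{\Lb^2} \to 0$.

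It remains to pin down the scale $\lambda_{1,n}$, which is where the normalization defining $K$ enters. Cauchy--Schwarz combined with $\|\nabla R_n^1\|_{\Lb^2} \to 0$ gives
\[
\tfrac{3 E_i}{2} = \int_{|x|\leq 1} |\nabla Q_n|^2\,dx = \int_{|y|\leq 1/\lambda_{1,n}} |\nabla V^1(y)|^2\,dy + o_n(1).
\]
Since $R \mapsto \int_{|y| \leq R} |\nabla V^1|^2\,dy$ is continuous and grows from $0$ to $\|\nabla V^1\|_{\Lb^2}^2 = 3 E_i > \tfrac{3E_i}{2}$, the sequence $1/\lambda_{1,n}$ must remain bounded and bounded away from $0$. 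Extracting once more, $\lambda_{1,n} \to \lambda_\infty \in (0, \infty)$, and continuity of the scaling action on $\Hb$ delivers strong convergence $Q_n \to \lambda_\infty^{-1/2} V^1(\cdot/\lambda_\infty)$ in $\Hb$; the limit lies in $\ZZZ$ by scale-invariance of the equation, and satisfies the normalization defining $K$ by passing to the strong limit.

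The main obstacle will be leveraging \eqref{Eqn:AssEj} at exactly the right moment: two independent asymptotic identities (the $\dot H^1$ Pythagorean expansion and the energy decoupling) have to be combined with the non-resonance hypothesis on $E(\ZZZ)$ in order to simultaneously annihilate the remainder and collapse the profile sum to a single term. The underlying elliptic profile decomposition and the Brezis--Lieb type decoupling for $\int F(Q_n)$ are by now standard, but carrying out the cross-term estimates for a general potential $F$ that is only assumed homogeneous of degree $6$ (rather than a pure power) will require some care.
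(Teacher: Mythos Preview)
Your proof is correct and follows essentially the same strategy as the paper's: a G\'erard profile decomposition, identification of the profiles as elements of $\ZZZ\cup\{0\}$ via weak closure (Proposition \ref{Prop:PropZ}\eqref{I:Z3}), energy/$F$-decoupling to invoke \eqref{Eqn:AssEj} and collapse to a single profile with vanishing remainder, and finally the $K$-normalization to trap the scale in a compact subset of $(0,\infty)$. The paper organizes this slightly differently by first isolating the single-profile-plus-vanishing-remainder statement as a separate Lemma \ref{Lem:CompactnessK} valid for arbitrary sequences in $\ZZZ$, and it reaches $\rho=0$ by applying Pohozaev directly to the $F$-decoupling (giving $E_i=\sum_j E(V^j)$ immediately) rather than solving your $2\times 2$ linear system, but these are cosmetic differences.
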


The proof of Proposition \ref{Prop:CompactnessK} relies upon the lemma below:

\begin{lem}

\label{Lem:CompactnessK}
Assume \eqref{H0},\eqref{Eqn:AssFinite} and \eqref{Eqn:AssEj}.
Let $( Q_{n} )_{n \geq 1}$ be a sequence of elements of $\ZZZ$. Then, after extraction of a subsequence from $ (Q_n )_{n \geq 1}$, there exists
$ Z  \in  \mathcal{Z}$, $ \{ \lambda_{n} \} $, and  $ w_{n} \in  \Hb $ satisfying for $n \gg 1$
\begin{equation}
Q_{n} = \frac{1}{\lambda_{n}^{\frac{1}{2}}} Z \left( \frac{x}{\lambda_{n}} \right) + w_{n}\text{ where }
\lim_{n \rightarrow \infty} \| \nabla w_{n} \|^{2}_{L^{2}} = 0.
\label{Eqn:DecompSameNrj}
\end{equation}
\end{lem}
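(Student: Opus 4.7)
The plan is to apply the linear profile decomposition to $(Q_n)$, show each profile lies in $\ZZZ\cup\{0\}$, and use the arithmetic constraint \eqref{Eqn:AssEj} to force a single surviving profile with vanishing remainder. First, since $E(\ZZZ)=\{E_1,\ldots,E_p\}$ is finite by \eqref{Eqn:AssFinite}, up to extraction we may assume $E(Q_n)=E_{j_0}$ for a fixed $j_0$. Proposition~\ref{Prop:PropZ}\eqref{I:Z1} then gives $\|Q_n\|_{\Hb}^2=3E_{j_0}$, so the sequence is bounded in $\Hb$. G\'erard's profile decomposition for bounded radial sequences in $\Hb(\RR^3)$ yields, after further extraction, radial profiles $V^j\in\Hb$ and scales $\lambda_{j,n}>0$ with $\lambda_{j,n}/\lambda_{k,n}+\lambda_{k,n}/\lambda_{j,n}\to\infty$ for $j\neq k$, such that for every $J\geq 1$,
\begin{equation*}
Q_n(x)=\sum_{j=1}^{J}\frac{1}{\lambda_{j,n}^{1/2}}V^j\!\left(\frac{x}{\lambda_{j,n}}\right)+r_n^J(x),
\end{equation*}
with $\lim_{J\to\infty}\limsup_{n\to\infty}\|r_n^J\|_{L^6}=0$ and the orthogonality identity $\|Q_n\|_{\Hb}^2=\sum_{j=1}^{J}\|V^j\|_{\Hb}^2+\|r_n^J\|_{\Hb}^2+o_n(1)$.

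For each $j$, set $\tilde Q_n^j(y):=\lambda_{j,n}^{1/2}Q_n(\lambda_{j,n}y)$. The elliptic system $-\triangle u=f(u)$ is invariant under the rescaling $u\mapsto\lambda^{1/2}u(\lambda\cdot)$ thanks to the degree-$5$ homogeneity of $f$, so $\tilde Q_n^j\in\ZZZ$ for every $n$; moreover $\tilde Q_n^j\rightharpoonup V^j$ weakly in $\Hb$ by the very definition of the profile. Proposition~\ref{Prop:PropZ}\eqref{I:Z3} (weak closedness of $\ZZZ\cup\{0\}$) then gives $V^j\in\ZZZ\cup\{0\}$ for every $j$. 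A standard decoupling argument---the cross terms in $\int F(\cdot)$ between profiles at orthogonal scales vanish in the limit, and $|F(r_n^J)|\lesssim|r_n^J|^6$ combined with the $L^6$-smallness of the remainder takes care of the tail---then produces
\begin{equation*}
E_{j_0}=\sum_{j\,:\,V^j\neq 0}E(V^j).
\end{equation*}
By Proposition~\ref{Prop:PropZ}\eqref{I:Z2} the sum is finite and each summand lies in $\{E_1,\ldots,E_p\}$, so \eqref{Eqn:AssEj} forces exactly one index $j_\star$ with $V^{j_\star}\neq 0$, and moreover $E(V^{j_\star})=E_{j_0}$.

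Finally, set $Z:=V^{j_\star}\in\ZZZ$ and $\lambda_n:=\lambda_{j_\star,n}$. Using $\|V^j\|_{\Hb}^2=3E(V^j)$ from Proposition~\ref{Prop:PropZ}\eqref{I:Z1} for the unique nonzero profile, the $\Hb$-Pythagorean identity reduces to $\|r_n^J\|_{\Hb}=o_n(1)$ once $J$ is chosen large enough to capture $V^{j_\star}$, and then $w_n:=Q_n-\lambda_n^{-1/2}Z(\cdot/\lambda_n)=r_n^J$ has the required decay in $\Hb$. The only non-routine step is the rigidity input \eqref{Eqn:AssEj} that collapses the sum to a single term; verifying the potential-energy decoupling for the general $\mathcal{C}^3$ nonlinearity $F$ homogeneous of degree $6$ (rather than a pure power) is the main technical nuisance, but it follows by Taylor-expanding $F$ and using local Rellich compactness together with the scale orthogonality, in the same spirit as the proof of Proposition~\ref{Prop:PropZ}\eqref{I:Z3}.
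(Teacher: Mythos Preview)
Your proposal is correct and follows essentially the same approach as the paper: G\'erard's profile decomposition on the bounded sequence, weak closedness of $\ZZZ\cup\{0\}$ to identify each profile, energy decoupling to obtain $E(Q_n)=\sum_j E(V^j)$, then assumption \eqref{Eqn:AssEj} to force a single nonzero profile, and finally the Pythagorean expansion to kill the remainder in $\Hb$. The only cosmetic difference is that the paper handles the $F$-decoupling via the explicit pointwise bound $|F(\sum_j z_j)-\sum_j F(z_j)|\lesssim \sum_{j\neq j'}|z_j||z_{j'}|^5$ (from $\mathcal{C}^1$ plus homogeneity) together with H\"older and scale orthogonality, rather than invoking Rellich compactness as you suggest; the pointwise route is more direct here.
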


The compactness of $K$ follows from Lemma \ref{Lem:CompactnessK}. Indeed, let $ ( Q_n )_{n \geq 1}$ be a sequence in $K$. Then (after extraction of a subsequence) we see
that (\ref{Eqn:DecompSameNrj}) holds. Extracting again subsequences, we can assume
$ \lim \limits_{n \rightarrow \infty} \lambda_n = \ell \in [0,\infty] $. Hence
\begin{equation}
\label{EQ_n}
\frac 32 E(Q_n)=\int_{|x|\leq 1}|\nabla Q_n(x)|^2dx=\int_{|x|\leq \ell}|\nabla Z(x)|^2dx+o(1).
\end{equation}
By the Assumption \eqref{Eqn:AssFinite}, we conclude that $E(Q_n)$  is constant for large $n$. Thus $E(Q_n)=E(Z)=\frac{1}{3}\int |\nabla Z|^2$. This shows that $Z$ is not identically $0$ and also, by \eqref{EQ_n}, that $\ell\in (0,\infty)$. Rescaling $Z$ we can assume that $\ell=1$, and thus $Q_{n} \rightarrow Z$ in $\Hb$, which shows, since $\mathcal{Z}$ is closed in $\Hb$ (Proposition \ref{Prop:PropZ} (\ref{I:Z3})), that $Z\in \ZZZ$. By \eqref{EQ_n} we conclude that $Z\in K$, which proves as announced that$K$ is compact in $\Hb$.

\begin{proof}[Proof of Lemma \ref{Lem:CompactnessK} ]
Let $ ( Q_{n})_{n \geq 1}$ be a sequence in $\mathcal{Z}$ . Observe that it is bounded: this follows
from (\ref{SR10}) and (\ref{Eqn:AssFinite}). Hence, by the profile decomposition of \cite{Gerard98}, there exist, up to a subsequence, $J_0 \in \NN \cup \{\infty\}$,
 $\{ Z_{j} \}_{1 \leq j <J_0 +1}$ and $ \{ \lambda_{j,n} \}_{ 1\leq j <J_{0}+1}$ such that $ Z_{j} \neq 0 $ , $\lambda_{j,n} > 0$,
and, for all $ 1 \leq J < J_{0} + 1 $   we have

\begin{align}
\label{Eqn:EqQn}
Q_{n} &= \sum \limits_{j=1}^{J} \frac{1}{\lambda_{j,n}^{\frac{1}{2}}} Z_{j} \left( \frac{x}{\lambda_{j,n}} \right) + w_{n}^{J}, \\
\label{Eqn:Pythagore}
\| \nabla Q_{n} \|^{2}_{L^{2}} &= \sum \limits_{j=1}^{J} \| \nabla Z_{j} \|^{2}_{L^{2}} + \| \nabla w_{n}^{J} \|^{2}_{L^{2}}+o_n(1) ,
\end{align}
$ Z_{j} = \wlim_{n\to\infty} \, \lambda_{j,n}^{\frac{1}{2}} Q_{n} \left( \lambda_{j,n} x \right) $, in $\Hb$, with
$$j\neq k\Longrightarrow \lim_{n\to\infty}\frac{\lambda_{j,n}}{\lambda_{k,n}} \in \{0,\infty\}$$
(orthogonality of scaling parameters), $ 1 \leq j \leq J: \;
\wlim_{n \to \infty} \lambda_{j,n}^{\frac{1}{2}} w_{n}^{J}(\lambda_{j,n} \cdot) = 0  $,  and $ \lim \limits_{J \rightarrow J_0} \limsup \limits_{n \rightarrow \infty} \| w_{n}^{J} \|_{L^{6}} =0 $. 

Since $\mathcal{Z}\cup\{0\}$ is closed for the weak topology of $\Hb$, (see Proposition \ref{Prop:PropZ}, (\ref{I:Z3})), we see that $ Z_{j} \in \mathcal{Z} $, and there exists $c_{0} > 0 $ such that
$ \| \nabla Z_{j} \|^{2}_{L^{2}} \geq c_{0}$. Hence (\ref{Eqn:Pythagore})  shows that  $ 0 \leq  J_0  < \infty $. As a consequence,
\begin{equation}
\begin{array}{l}
Q_{n} = \sum \limits_{j=1}^{J_{0}} \frac{1}{\lambda^{\frac{1}{2}}_{j,n}} Z_{j} \left( \frac{x}{\lambda_{j,n}} \right) + w_{n},
\end{array}
\nonumber
\end{equation}
with $ \lim \limits_{n \rightarrow \infty} \| w_{n} \|_{\Lb^{6}} = 0$. Observe that $J_{0} \neq 0$: if not,
$ \lim_{n \to \infty} \| Q_n\|_{\Lb^6}=0$, which would contradict the Pohozaev identity \eqref{SR10}. The elementary estimate
\begin{equation}
\begin{array}{l}
 \left| F \left( \sum \limits_{j=1}^{\bar{j}} z_{j}  \right) - \sum \limits_{j=1}^{\bar{j}} F(z_{j}) \right| \lesssim
 \sum \limits_{j \neq j'} |z_{j}| |z_{j'}|^{5},
\end{array}
\label{Eqn:EstFDecoupl}
\end{equation}
show that there exist two constants $C_{1} > 0$ and $C_{2} > 0$ such that
\begin{multline*}
\left|\int_{\mathbb{R}^{3}}  F(Q_{n}) \,dx -
\sum \limits_{j=1}^{J_{0}}  \int_{\mathbb{R}^{3}}  F(Z_{j}) \,dx - \int_{\mathbb{R}^{3}} F(w_{n}) \,dx
\right|\\
\leq C_{1}  \sum \limits_{j \neq j'} \int_{\mathbb{R}^{3}} \frac{1}{\lambda_{j,n}^{\frac{1}{2}}} \left| Z_{j} \left( \frac{x}{\lambda_{j,n}} \right) \right|
\frac{1}{\lambda_{j',n}^{\frac{1}{2}}}  \left| Z_{j'} \left( \frac{x}{\lambda_{j',n}} \right) \right|^{5} \,dx\\
+  C_{2}  \sum \limits_{j=1}^{J_{0}} \int_{\mathbb{R}^{3}} | w_{n}(x) |^{5} \frac{1}{\lambda_{j,n}^{\frac{1}{2}}} \left| Z_{j} \left( \frac{x}{\lambda_{j,n}} \right) \right| \,dx
\end{multline*}
We infer that each term of the second line goes to zero as $n \rightarrow \infty$. Indeed an approximation argument in $L^{6}$ shows that we may assume without loss of generality
that the $Z_{j}$ are smooth and compactly supported; next, from (\ref{Eqn:OrthCondLambda}), elementary changes of variables, and H\"older inequality, we
can prove that the statement holds. It is also clear from H\"older inequality and the smallness property of the $L^{6}-$ norm of $w_{n}$ for $n \gg 1$ that
each term of the third line goes to zero as $n \rightarrow \infty$. 

Using also (\ref{SR10}) we get $ \frac{E(Q_{n})}{2} = \sum \limits_{j=1}^{J_{0}} \frac{E(Z_{j})}{2} + o_{n}(1) $. By \eqref{Eqn:AssFinite} we can assume that $E(Q_n)$ is constant, which yields $ \frac{E(Q_{n})}{2} = \sum \limits_{j=1}^{J_{0}} \frac{E(Z_{j})}{2}$.
Hence, combining with (\ref{Eqn:AssEj}) we get
$J_{0}= 1$ and $E(Q_{n}) = E(Z_{1})$ for all $n$. Moreover (\ref{Eqn:Pythagore}) implies that
$ \lim \limits_{n \rightarrow \infty} \| \nabla w_{n} \|^{2}_{L^{2}} = 0$.

%
\end{proof}

\subsection{An elementary property of compact sets}
\label{sub:compact}
We next prove:
\begin{lem}
\label{L:compact}
 Let $(E,d)$ be a metric space and $K$ a compact subset of $E$. Let $T_0\in \RR$. Les $U:[T_0,\infty)\to E$ such that for all sequence $(t_n)_{n\in \NN}$, there exists a subsequence $(t_n)_{n\in I}$ and $\Phi\in K$ such that
 $$ \lim_{n\to\infty} d(U(t_n),\Phi)=0.$$
 Then there exists $U_K:[T_0,\infty)\to K$ such that
 $$ \lim_{t\to\infty} d(U(t),U_K(t))=0.$$
\end{lem}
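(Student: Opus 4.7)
The plan is to reduce everything to showing that $d(U(t),K)\to 0$ as $t\to\infty$, and then selecting $U_K(t)$ as a minimizer on $K$ of $\Phi\mapsto d(U(t),\Phi)$. The hypothesis of the lemma is precisely the sequential formulation of the fact that $U(t)$ accumulates only on points of $K$, so the whole argument will be a short compactness/contradiction exercise.

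First I would establish:
\begin{equation*}
\lim_{t\to\infty} d\bigl(U(t),K\bigr)=0,
\end{equation*}
arguing by contradiction. If this fails, there exist $\varepsilon>0$ and a sequence $t_n\to\infty$ with $d(U(t_n),K)\geq \varepsilon$ for every $n$. Applying the hypothesis of the lemma to this sequence, I extract a subsequence $(t_n)_{n\in I}$ and $\Phi\in K$ such that $d(U(t_n),\Phi)\to 0$; in particular $d(U(t_n),K)\leq d(U(t_n),\Phi)\to 0$, contradicting the lower bound $\varepsilon$.

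Next, for each $t\in[T_0,\infty)$ the map $\Phi\mapsto d(U(t),\Phi)$ is continuous on the compact set $K$, hence attains its infimum. I select (using the axiom of choice) $U_K(t)\in K$ with
\begin{equation*}
d\bigl(U(t),U_K(t)\bigr)=\inf_{\Phi\in K}d(U(t),\Phi)=d\bigl(U(t),K\bigr).
\end{equation*}
Combining with the first step yields $d(U(t),U_K(t))\to 0$ as $t\to\infty$, which is the conclusion.

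No step of this argument looks like a genuine obstacle: the only subtle point is noticing that the hypothesis implies the uniform statement $d(U(t),K)\to 0$ (and not merely the existence of \emph{some} sequence along which $U$ approaches $K$), and this is handled cleanly by the contradiction argument in the first paragraph. Note that the lemma as stated does not require any regularity (continuity, measurability) of $U_K$, so no further work on the selection is needed here; if later applications demand a continuous selection, this would have to be addressed separately, but it is not part of the statement being proved.
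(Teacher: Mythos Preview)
Your proof is correct and matches the paper's approach exactly: the paper also reduces to showing $d(U(t),K)\to 0$ by a ``straightforward contradiction argument'' and then selects $U_K(t)$ as a point of $K$ realizing the distance. You have simply fleshed out that contradiction argument explicitly.
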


\begin{proof}
For $X\in E$, we denote $d(X,K)=\inf_{Y\in K} d(X,Y)$. Since $K$ is compact, there exists $X_K\in K$ such that $d(X,K)=d(X,X_K)$. Furthermore the assumptions of the claim and a straightforward contradiction argument shows that $\lim_{t\to\infty} d(U(t),K)=0$. Choosing $U_K(t)$ such that
$$ d(U(t),U_K(t))=d(U(t),K),$$
we obtain the conclusion of the claim.
\end{proof}

\subsection{Conservation of the energies of the asymptotic states}
\label{sub:SameOrder}

In this subsection we assume \eqref{H0}, \eqref{Eqn:AssFinite} and \eqref{Eqn:AssEj}, and consider a solution $u$ with $T_+(u)=+\infty$. We will also assume that $u$ does not scatter to a linear solution as $t\to\infty$. By Proposition \ref{P:bounded}, we can use Proposition \ref{P:sequence_resolution}. We let $v_L$ be given by the conclusion of this proposition. 
We prove the following:
\begin{prop}
Let $t^{\pm}_{n} \rightarrow \infty $ be two sequences of times such that there exist $ J^{\pm} \in  \mathbb{N}$, positive numbers
$ \lambda^{\pm}_{1,n}  \ll \lambda^{\pm}_{2,n} \ll ... \ll \lambda^{\pm}_{J^{\pm},n}  $, and $ Q^{\pm}_{j} \in \mathcal{Z}$ with energy $ E(Q^{\pm}_{j})$ for $ 1 \leq j \leq J^{\pm} $ for which
\begin{equation}
\begin{array}{l}
\vec{u}(t^{\pm}_{n}) = \vec{v}_{L}(t^{\pm}_{n}) + \sum \limits_{j=1}^{J^{\pm}} \frac{1}{\left( \lambda_{j,n}^{\pm} \right)^{\frac{1}{2}}}
\left( Q^{\pm}_{j} \left( \frac{x}{\lambda^{\pm}_{j,n}} \right) , 0 \right)
+ o_{n}(1)\text{ in }\HHb.
\end{array}
\label{Eqn:ProfDecompJPlusMin}
\end{equation}
Then $J^{+} = J^{-} $, and $E(Q^{+}_{j}) = E(Q^{-}_{j})$ for $ 1 \leq j \leq J $, with $ J := J^{+} =J^{-} $.

\label{Prop:SameOrderNrj}
\end{prop}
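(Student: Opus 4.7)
The plan is to combine conservation of energy (in the Pythagorean form dictated by profile decompositions) with the irreducibility property \eqref{Eqn:AssEj}, the latter applied to the tail sums of profile energies extracted from exterior energies at intermediate scales.

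First I would expand $E(u)$ using the decomposition \eqref{Eqn:ProfDecompJPlusMin} at time $t_n^\pm$. Since the scales $\lambda_{j,n}^\pm$ are pairwise pseudo-orthogonal and $\lambda_{J^\pm,n}^\pm\ll t_n^\pm$ (so the stationary profiles are spatially decoupled from the concentration region of $\vec v_L$), and since $\|v_L(t)\|_{L^6}\to 0$ by the dispersive argument recalled in the footnote of Remark~\ref{R:energy}, the gradient, kinetic, and potential integrals all decouple in the limit $n\to\infty$. Using that the gradient+kinetic part of $E(\vec v_L)$ is conserved, together with the Pohozaev identity \eqref{SR10}, I obtain
$$E(u) = \frac{1}{2}\|(v_0,v_1)\|_{\HHb}^{2} + \sum_{j=1}^{J^{\pm}} E(Q_j^\pm) + o_n(1),$$
which gives $\sum_{j=1}^{J^+} E(Q_j^+) = \sum_{j=1}^{J^-} E(Q_j^-) =: E^\ast$. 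Encoding each side by multiplicity vectors $\alpha_k^\pm := \#\{j:E(Q_j^\pm)=E_k\}\in\NN$, this reads $\sum_{k=1}^{p}\alpha_k^+ E_k = \sum_{k=1}^{p}\alpha_k^- E_k$ with $J^\pm = \sum_k \alpha_k^\pm$.

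I would then recover the individual energies from exterior energy at intermediate scales. For any sequence $\mu_n$ with $\lambda_{j,n}^+\ll\mu_n\ll\lambda_{j+1,n}^+$ (with the conventions $\lambda_{0,n}^+=0$, $\lambda_{J^++1,n}^+=+\infty$), the same orthogonality arguments yield
$$\int_{|x|>\mu_n}\!\!\bigl(|\nabla(u-v_L)(t_n^+,x)|^2+|\partial_t(u-v_L)(t_n^+,x)|^2\bigr)\,dx \;\longrightarrow\; 3\sum_{k>j}E(Q_k^+),$$
so the decreasing tail sums $\sigma_j^\pm := \sum_{k>j} E(Q_k^\pm)$ are intrinsic to $\vec u$, independently of which sequence is chosen. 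A descending induction on $j$ starting from $\sigma_0^\pm = E^\ast$ would then reduce the matching of the full ordered tuples to the following step: if $k_\ast$ denotes the largest index at which $\alpha_{k_\ast}^+\ne\alpha_{k_\ast}^-$, the scale ordering enables localization to the $E_{k_\ast}$-block and produces an identity of the form $E_{k_\ast}=\sum_{k<k_\ast}\beta_k E_k$ with $\beta_k\in\NN$ not all zero, which is directly forbidden by \eqref{Eqn:AssEj}. This simultaneously delivers $J^+=J^-$ and $E(Q_j^+)=E(Q_j^-)$ for every $j$.

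The main obstacle I expect lies in that last reduction: \eqref{Eqn:AssEj} asserts only the irreducibility of each individual $E_k$ as a $\NN$-combination of the $E_\ell$'s, not the uniqueness of nonnegative integer decompositions of a general $E^\ast$ (e.g.\ if $E(\ZZZ)=\{2,3\}$, both \eqref{Eqn:AssFinite} and \eqref{Eqn:AssEj} hold, yet $6=3\cdot 2=2\cdot 3$). The delicate part is therefore to use the scale-ordered exterior-energy information from the second step to ensure that in the derived integer relation the leading coefficient at $E_{k_\ast}$ is $1$ and the remaining coefficients are nonnegative, thereby reducing to a shape where \eqref{Eqn:AssEj} literally applies.
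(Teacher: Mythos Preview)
Your first step (using energy conservation and profile orthogonality to obtain $\sum_{j=1}^{J^+}E(Q_j^+)=\sum_{j=1}^{J^-}E(Q_j^-)$) is correct and is also used implicitly in the paper. The rest of the proposal, however, has a genuine gap.

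The sentence ``the decreasing tail sums $\sigma_j^\pm$ are intrinsic to $\vec u$, independently of which sequence is chosen'' is precisely the content of the proposition and is not justified by what you wrote. Your exterior-energy computation at intermediate scales $\mu_n$ is carried out at times $t_n^+$ for the $+$ sequence and at times $t_n^-$ for the $-$ sequence; nothing connects the two. Indeed, if that claim were true, you would get $E(Q_j^+)=\sigma_{j-1}^+-\sigma_j^+=\sigma_{j-1}^--\sigma_j^-=E(Q_j^-)$ and $J^+=J^-$ immediately, \emph{without ever invoking \eqref{Eqn:AssEj}}. That you then need an extra argument using \eqref{Eqn:AssEj} is a signal that the claim was not actually established. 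Your fallback reduction (``localize to the $E_{k_\ast}$-block'') does not work either: the ordering $\lambda_{1,n}^\pm\ll\cdots\ll\lambda_{J^\pm,n}^\pm$ is by \emph{scale}, not by the energy level $E_k$, so there is no reason the profiles with energy $E_{k_\ast}$ occupy a contiguous block of scales, and no way to isolate an identity of the shape $E_{k_\ast}=\sum_{k<k_\ast}\beta_kE_k$. Your own $\{2,3\}$ example shows that the purely combinatorial information $\sum_k\alpha_k^+E_k=\sum_k\alpha_k^-E_k$ plus \eqref{Eqn:AssEj} is insufficient.

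What the paper does, and what your proposal is missing, is a mechanism to compare the two sequences \emph{through a third one}. The key is to build, for each index $j$, a function $\tilde\lambda_j(t)$ defined for \emph{all large $t$} (not just along $t_n^\pm$) via a level-set condition on $\int_{|x|\le\lambda}|\nabla(u-v_L)(t)|^2\,dx$, and the associated $\widetilde E_j(t)=\int_{|x|\le \bar C\tilde\lambda_j(t)}|\nabla(u-v_L)(t)|^2\,dx-3\sum_{k<j}E(Q_k)$; a separate lemma (the paper's Lemma \ref{Lem:EsttildeEj}, itself relying on the compactness of $K$ and hence on \eqref{Eqn:AssEj}) shows that along any sequence with a decomposition \eqref{expansion_seq}, $\widetilde E_j(t_n)\to 3E(Q_j)$. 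One then argues by contradiction: if the energies first differ at index $J'+1$, a suitable continuous $\delta(t)$ built from the $\widetilde E_j$'s takes different limiting values along $t_n^-$ and $t_n^+$; by the intermediate value theorem there is a third sequence $t_n\in(t_n^-,t_n^+)$ with $\delta(t_n)$ at a prescribed intermediate value, and applying Theorem \ref{T:sequence_resolution} along $t_n$ produces a decomposition whose energies are forced (by induction and Lemma \ref{Lem:EsttildeEj}) to agree with the $E(Q_j^-)$ up to $J'$ but then make $\delta(t_n)$ either too small or too large. The continuity in $t$ and the use of a third sequence are the ideas your outline lacks.
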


The proof relies upon the lemma below:

\begin{lem}
Let $ 0 <  \epsilon \ll 1  $. There exists $ \bar{C} > 0 $ such that the following holds. Let $ t_{n} \rightarrow \infty $ such that one can find $J\in \mathbb{N}$, and,  for $1 \leq j \leq J $, $Q_{j} \in \mathcal{Z}$ with energy $E(Q_{j})$, and $(\lambda_{j,n})_{n} $ for which
(\ref{Eqn:OrthCondLambda}) and (\ref{expansion_seq}) hold. Fix $j$ with $1\leq j\leq J$. Let 
$$ \tilde{\lambda}_{j} (t) := \inf \left\{ \lambda : \;  \int_{|x| \leq \lambda} \left| \nabla \left(  u(t) - v_{L}(t )\right) \right|^{2} + e^{-|x|- |t|} \,dx
-  3 \sum \limits_{k=1}^{j-1} E(Q_{k})  \geq \epsilon \right\} $$ 
and  
$$ \widetilde{E}_{j}(t) := \int_{|x| \leq \bar{C} \tilde{\lambda}_{j} (t) } | \nabla (u - v_{L})(t)|^{2} \,dx
- 3 \sum \limits_{k=1}^{j-1} E(Q_{k}).$$
Then 
\begin{equation}
\limsup_{n\to\infty} \left| \widetilde{E}_{j}(t_{n}) - 3 E(Q_{j}) \right| \leq \epsilon.
\label{Eqn:EsttildeEj}
\end{equation}
\label{Lem:EsttildeEj}
\end{lem}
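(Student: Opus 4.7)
The plan is to exploit the scale-orthogonality of the profiles $\{Q_k\}_{k=1}^J$ in \eqref{expansion_seq} together with the compactness of $K$ (Proposition \ref{Prop:CompactnessK}) to compute $\tE_j(t_n)$ asymptotically in terms of the local energy distribution of the single profile $Q_j$.

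The first step is to establish, using the profile decomposition \eqref{expansion_seq} and the scale-orthogonality $\lambda_{1,n}\ll\cdots\ll\lambda_{J,n}$, the asymptotic formula
\begin{equation}
\label{Eqn:PL1}
\int_{|x|\le\rho\lambda_{j,n}}\bigl(|\nabla(u-v_L)(t_n)|^{2}+e^{-|x|-t_n}\bigr)\,dx - 3\sum_{k=1}^{j-1}E(Q_k) \xrightarrow[n\to\infty]{} \psi(\rho),
\end{equation}
where $\psi(\rho):=\int_{|y|\le\rho}|\nabla Q_j(y)|^{2}\,dy$, uniformly for $\rho$ in compact subsets of $(0,\infty)$. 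The contributions split by scale: profiles with $k<j$ are fully captured inside the ball ($\rho\lambda_{j,n}/\lambda_{k,n}\to\infty$) and contribute $3E(Q_k)$ each by item \eqref{I:Z1} of Proposition \ref{Prop:PropZ}, cancelling the subtracted sum; profiles with $k>j$ contribute $0$ ($\rho\lambda_{j,n}/\lambda_{k,n}\to 0$); the $k=j$ profile contributes $\psi(\rho)$; the cross-terms vanish by scale-orthogonality (after a standard truncation); the remainder from \eqref{expansion_seq} is $o_n(1)$ by Cauchy--Schwarz; and the exponential tail is bounded by $e^{-t_n}\int e^{-|y|}\,dy\to 0$.

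Let $\mu_j>0$ be the smallest $\rho$ with $\psi(\rho)=\epsilon$, which is well defined and positive provided $\epsilon<E_1\le E(Q_j)$, since $\psi$ is continuous with $\psi(0)=0$ and $\psi(\infty)=3E(Q_j)$. Combining \eqref{Eqn:PL1} with the definition of $\tlambda_j(t_n)$ as an infimum and the monotonicity in $\rho$ of the left-hand side of \eqref{Eqn:PL1}, a sandwich argument shows that every accumulation point $\rho^*$ of $\tlambda_j(t_n)/\lambda_{j,n}$ satisfies $\psi(\rho^*)=\epsilon$, and in particular $\rho^*\ge\mu_j$. Applying \eqref{Eqn:PL1} at the radius $\bar C\,\tlambda_j(t_n)$ and using the uniformity in $\rho$, we deduce, along any subsequence with $\tlambda_j(t_n)/\lambda_{j,n}\to\rho^*$, that $\tE_j(t_n)\to\psi(\bar C\rho^*)$. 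The upper bound $\psi(\bar C\rho^*)\le 3E(Q_j)$ is automatic. Hence \eqref{Eqn:EsttildeEj} reduces to finding $\bar C$, depending only on $\epsilon$, such that $\psi(\bar C\mu_Q)\ge 3E(Q)-\epsilon$ for every $Q\in\mathcal Z$, where $\mu_Q$ is the smallest $\rho$ with $\int_{|y|\le\rho}|\nabla Q|^2\,dy=\epsilon$.

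The main obstacle is precisely this uniformity of $\bar C$ across the unbounded set $\mathcal Z$, and it is where Proposition \ref{Prop:CompactnessK} enters. By scale-invariance, every $Q\in\mathcal Z$ is a rescaling of some $\widetilde Q\in K$, and under this rescaling both $\mu_Q$ and the truncated integrals $\int_{|y|\le R\mu_Q}|\nabla Q|^2\,dy$ transform covariantly, so the required statement reduces to the uniform claims: (i) $\mu_{\widetilde Q}$ is bounded below away from $0$ on $K$, and (ii) $\int_{|y|\le R}|\nabla\widetilde Q|^2\,dy\to 3E(\widetilde Q)$ uniformly for $\widetilde Q\in K$ as $R\to\infty$. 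Both follow from continuity of the cut-off integral on $\Hb$ combined with compactness of $K$ and the finiteness of $E(\mathcal Z)$ from \eqref{Eqn:AssFinite}; choosing $\bar C$ large enough to enforce (ii) with error $\epsilon$ at radius $\bar C\,\inf_{\widetilde Q\in K}\mu_{\widetilde Q}$ concludes the proof.
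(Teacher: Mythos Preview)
Your proposal is correct and follows essentially the same strategy as the paper: compute the truncated energy of $u-v_L$ at scale $\lambda_{j,n}$ via the scale-orthogonal profile expansion, identify $\tlambda_j(t_n)/\lambda_{j,n}$ by a sandwich argument, and then reduce \eqref{Eqn:EsttildeEj} to a uniform ``growth of local norm'' statement for elements of $\ZZZ$, proved via the compactness of $K$ (Proposition~\ref{Prop:CompactnessK}). The paper packages the last step as a separate Claim (for all $Q\in\ZZZ$ and all $\eta$ with $\int_{|x|\le\eta}|\nabla Q|^2\ge\epsilon$, one has $\int_{|x|\le\tC\eta}|\nabla Q|^2\ge 3E(Q)-\epsilon$), which is exactly your uniform-$\bar C$ statement after rescaling to $K$; the only technical difference is that the paper first observes, via ODE uniqueness, that $\rho\mapsto\int_{|y|\le\rho}|\nabla Q_j|^2$ is strictly increasing and hence pins down a unique limit $c_j$ of $\tlambda_j(t_n)/\lambda_{j,n}$, whereas you bypass this by working with arbitrary accumulation points $\rho^*\ge\mu_j$ and monotonicity of $\psi$.
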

\begin{rem}
\label{R:tlambdaj}
 Since $\lambda\mapsto \int_{|x| \leq \lambda} \left| \nabla \left(  u(t) - v_{L}(t )\right) \right|^{2} + e^{-|x|- |t|} \,dx$ is increasing, $\tlambda_j(t)$ is the only $\lambda$ such that 
 $$ \int_{|x| \leq \lambda} \left| \nabla \left(  u(t) - v_{L}(t )\right) \right|^{2} + e^{-|x|- |t|} \,dx=\eps$$
\end{rem}
We postpone the proof of 
Lemma \ref{Lem:EsttildeEj} and prove Proposition \ref{Prop:SameOrderNrj}.

\begin{proof}
Let $ 0 <  \epsilon \ll 1  $ be small enough such that all the statement below are true. We may assume without loss of generality that $J_{-} \leq  J_{+}$.
We proceed by contradiction, assuming that there exists $J'\in \NN$ with 
$ 0\leq  J' \leq J_{-} $ such that:  
\begin{gather*}
1\leq  j \leq J' \Longrightarrow E(Q^{+}_{j}) = E(Q^{-}_{j})\\
J'=J_-<J_+\text{ or } \Big(J'<J_-\leq J_+\text{ and }E(Q_{J'+1}^-)\neq E(Q_{J'+1}^+)\Big).
\end{gather*}
Extracting subsequences and reindexing $(t_n^+)_n$ if necessary, we will also assume $t_n^-<t_n^+$ for all $n$.

We apply Lemma \ref{Lem:EsttildeEj} to the sequences $(t_n^-)_n$ (with $Q_j=Q_j^-$) and $(t_n^+)_n$ (with $Q_j=Q_j^+$). We denote by $\tilde{\lambda}_j^{\pm}(t)$, $\widetilde{E}_j^{\pm}(t)$ the quantities defined in this lemma in each of these two cases. Note that by the definition of $J'$, we have $\tE_j^+(t)=\tE_j^-(t)$ and $\tlambda_j^+(t)=\tlambda_j^-(t)$ for $0\leq j\leq \min(J_-,J'+1)$.

We let 
$$m_1=\min_{Q\in \mathcal{Z}} E(Q), \quad m_2=\min_{\substack{(Q,R)\in \mathcal{Z}^2\\ E(Q)\neq E(R)}} \Big|E(Q)-E(R)\Big|, \quad m=\min(m_1,m_2)>0.$$

\medskip

\noindent\emph{Case 1: $J'=J_{-} < J_{+}$.}
Let 
$$\delta(t)=\sum_{j=1}^{J'} \left|\tE_j^-(t)-3E(Q_j^-)\right|+|\tE_{J'+1}^+(t)|,$$

Since $Q_k^+=Q_k^-$ for $k=1,\ldots,J'$, we have
$$ \tE_{J'+1}^+(t_n^-)= \int_{|x| \leq \bar{C} \tilde{\lambda}_{J'+1}^+ (t_n^-) } | \nabla (u - v_{L})(t_n^-)|^{2} \,dx
- 3 \sum \limits_{k=1}^{J'} E(Q_{k}^-).$$
Obviously, $\tlambda_{J'}^-(t_n^-)=\tlambda_{J'}^+(t_n^-)\leq \tlambda_{J'+1}^+(t_n^-)$. Thus 
\begin{equation}
\label{sandwich}
\tE^-_{J'}(t_n^-)-3E(Q_{J'}^-)\leq \tE^+_{J'+1}(t_n^-)\leq \int_{\RR^3} | \nabla (u - v_{L})(t_n^-)|^{2} \,dx
- 3 \sum \limits_{k=1}^{J_-} E(Q_{k}^-).
\end{equation} 
Noting that the right-hand side of \eqref{sandwich} goes to $0$ as $n$ goes to infinity, and using Lemma \ref{Lem:EsttildeEj} to bound the left-hand side, we obtain
$$\limsup_{n}|\tE^+_{J'+1}(t_n^-)|\leq \epsilon.$$
Using Lemma \ref{Lem:EsttildeEj} to bound the other terms in the definition of $\delta(t_n^-)$ we deduce
\begin{equation}
\label{limdeltatn-}
\limsup_{n\to\infty} \delta(t_n^-)\leq (J'+1)\epsilon.
\end{equation} 
Furthermore by Lemma \ref{Lem:EsttildeEj} applied to the sequence $(t_n^+)_n$, 
\begin{equation}
\label{limdeltatn+}
\limsup_{n\to\infty} \delta(t_n^+)\geq\limsup_{n\to\infty} \left|\tE^+_{J'+1}(t_n^+)\right|\geq m-\epsilon. 
\end{equation} 
Observe that $ t \rightarrow \delta(t)$ is a continuous function:
this follows from the continuity of $\tilde{\lambda}_{j}$, $1\leq j\leq J'$, and $\tilde{\lambda}_{J'+1}^+$
(the proof is similar to that of the continuity of $\lambda_{j}$  in end of the proof of Theorem \ref{T:continuous_resolution} below, see the argument after \eqref{SR44Sev}) and that of $ t \rightarrow u(t) - v_{L}(t)$ in $\Hb$. By the intermediate value theorem, there exists $t_n\in (t_n^-,t_n^+)$ such that
\begin{equation}
\label{deft_ndelta}
\delta(t_n)=(2J'+100)\epsilon. 
\end{equation} 
By Theorem \ref{T:sequence_resolution}, extracting subsequences, there exists $J\geq 1$, and for all $j\in \{1,\ldots, J\}$, $(\lambda_{j,n})_n$ and $Q_j\in \mathcal{Z}$ such that \eqref{Eqn:OrthCondLambda} and \eqref{expansion_seq} hold for the sequence $(t_n)_n$.  We prove by finite induction on $j$:
\begin{equation}
\label{FiniteInduction}
 \forall j\in \{1,\ldots,J'\}, \quad j\leq J\quad \text{and} \quad E(Q_j)=E(Q_j^-)=E(Q_j^+).
\end{equation} 
Note that the equality $E(Q_j^+)=E(Q_j^-)$ follows from the definition of $J'$.

If $J'=0$, there is nothing to prove. Assuming $J'\geq 1$, we will prove simultaneously the case $j=1$ and the induction step.

Let $k$ with $1\leq k\leq J'$, and assume $E(Q_j)=E(Q_j^-)$ for $1\leq j\leq k-1$ (we do not make any assumption if $k=1$). This implies $\tE_j=\tE_j^-$ for $1\leq j\leq k$. We will prove $E(Q_{k})=E(Q_{k}^-)$. 

By the definition of $\delta(t)$ and $t_n$, 
$$ \left|\tE_k^-(t_n)-3E(Q_k^-)\right|\leq (2J'+100)\epsilon.$$
Furthermore, by Lemma \ref{Lem:EsttildeEj}, and since $\tE_k(t_n)=\tE_k^-(t_n)$, we have, for large $n$,  $\left|\tE_k^-(t_n)-3E(Q_k)\right|\leq 2\epsilon$. Hence $\left|3E(Q_k^-)-3E(Q_k)\right|\leq (2J'+102)\epsilon$. Taking $\epsilon$ small enough, we obtain $\left|3E(Q_k^-)-3E(Q_k)\right|<m$, which implies $E(Q_k^-)=E(Q_k)$, concluding the proof of \eqref{FiniteInduction}.

Let $j\in \{1,\ldots, J'\}$. Using \eqref{FiniteInduction} (and the fact that it implies $\tE_j=\tE_j^-$), Lemma \ref{Lem:EsttildeEj} yields $|\tE_j^-(t_n)-3E(Q_j^-)|=|\tE_j(t_n)-3E(Q_j)|\leq 2\epsilon$ for large $n$. 

Using also \eqref{deft_ndelta} and the smallness of $\epsilon$, we obtain that for large $n$
$$100\eps\leq |\tE_{J'+1}^+(t_n)|\leq (2J'+100)\epsilon\leq \frac{m}{10}.$$
This is impossible, since by Lemma \ref{Lem:EsttildeEj}, and \eqref{FiniteInduction} we have
$$ \limsup_{n\to\infty}| \tE_{J'+1}^+(t_n)|\begin{cases}
  \leq \epsilon&\text{ if }J'=J\\
  \geq m-\epsilon&\text{ if }J'<J.
                                         \end{cases}
$$
\medskip

\noindent\emph{Case 2: $J'<J_{-} \leq J_{+}$.}
The argument is almost the same as the one in case $1$, modifying the definition of $\delta(t)$ to 
$$\delta(t)=\sum_{j=1}^{J'+1} \left|\tE_j^-(t)-3E(Q_j^-)\right|.$$
Using that by the definitions of $J^-$ and $m$, $|E(Q_{J'+1}^-)-E(Q_{J'+1}^+)|\geq m$  we see that \eqref{limdeltatn-} and \eqref{limdeltatn+} hold. For large $n$, by the intermediate value theorem,  we thus can find $t_n\in (t_n^-,t_n^+)$ such that \eqref{deft_ndelta} holds. 

We use again Theorem \ref{T:sequence_resolution}, extracting subsequences, to obtain $J\geq 0$, and for all $j\in \{1,\ldots, J\}$, $(\lambda_{j,n})_n$ and $Q_j\in \mathcal{Z}$ such that \eqref{Eqn:OrthCondLambda} and \eqref{expansion_seq} hold. With the same proof than \eqref{FiniteInduction}, we obtain
\begin{equation*}
 \forall j\in \{1,\ldots,J'+1\}, \quad j\leq J\quad \text{and} \quad E(Q_j)=E(Q_j^-)=E(Q_j^+).
\end{equation*} 
This implies by Lemma \ref{Lem:EsttildeEj}
$$(2J'+100)\epsilon=\delta(t_n)=\limsup_{n\to\infty} \delta(t_n)\leq 2(J'+1)\epsilon,$$
a contradiction.


\end{proof}

It remains to prove Lemma \ref{Lem:EsttildeEj}. The proof is based upon the claim below:
\begin{claim}
Let $ \epsilon > 0 $. Then there exists $ \tC > 0 $ such that for all $\eta > 0$ and for
all $ Q \in \mathcal{Z} $ satisfying $ \int_{|x| \leq \eta} |\nabla Q (x)|^{2} \,dx  \geq
\epsilon $ we have
\begin{equation}
\int_{|x| \leq \tC \eta} |\nabla Q (x)|^{2} \,dx \geq 3 E(Q) - \epsilon \cdot
\nonumber
\end{equation}
\label{Claim:GrothLocNorm}
\end{claim}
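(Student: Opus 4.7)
The plan is to argue by contradiction and exploit the concentration compactness packaged in Lemma \ref{Lem:CompactnessK}, which reduces any sequence in $\mathcal{Z}$ to a single rescaled profile under Assumptions \eqref{Eqn:AssFinite} and \eqref{Eqn:AssEj}. If the Claim fails, there will exist $\epsilon_{0} > 0$ and sequences $(Q_{n}) \subset \mathcal{Z}$, $(\eta_{n}) \subset (0, \infty)$ with
\begin{equation*}
\int_{|x| \leq \eta_{n}} |\nabla Q_{n}|^{2} \, dx \geq \epsilon_{0}, \qquad \int_{|x| \leq n \eta_{n}} |\nabla Q_{n}|^{2} \, dx < 3 E(Q_{n}) - \epsilon_{0}.
\end{equation*}
Using the scale invariance of $\mathcal{Z}$ and of the energy, I first replace $Q_{n}$ by the rescaled element $\eta_{n}^{1/2} Q_{n}(\eta_{n} \cdot) \in \mathcal{Z}$, reducing to the normalized situation $\eta_{n} = 1$ for all $n$.

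Next I will apply Lemma \ref{Lem:CompactnessK} to the sequence $(Q_{n})$: after extraction this furnishes $Z \in \mathcal{Z}$, scales $\lambda_{n} > 0$ and remainders $w_{n}$ with $\| \nabla w_{n} \|_{L^{2}} \to 0$ such that $Q_{n}(x) = \lambda_{n}^{-1/2} Z(x/\lambda_{n}) + w_{n}(x)$, together with the crucial identity $E(Q_{n}) = E(Z)$ for all $n$ large (this is precisely where \eqref{Eqn:AssFinite} and \eqref{Eqn:AssEj} are used in the proof of that lemma). A Cauchy--Schwarz argument on each ball, using $\|\nabla w_n\|_{L^2}\to 0$, then yields, uniformly in $R > 0$,
\begin{equation*}
\int_{|x| \leq R} |\nabla Q_{n}|^{2} \, dx = \int_{|y| \leq R/\lambda_{n}} |\nabla Z|^{2} \, dy + o(1), \quad n \to \infty.
\end{equation*}

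Plugging $R = 1$ into the first inequality yields $\int_{|y| \leq 1/\lambda_{n}} |\nabla Z|^{2} \, dy \geq \epsilon_{0}/2$ for $n$ large; since the function $R \mapsto \int_{|y| \leq R}|\nabla Z|^{2}$ vanishes as $R \to 0^{+}$ (dominated convergence), this forces $\lambda_{n}$ to remain bounded above by some constant $C_{0}$. Plugging $R = n$ into the second inequality and using $\int_{\RR^{3}} |\nabla Z|^{2} \, dy = 3 E(Z) = 3 E(Q_{n})$, I obtain $\int_{|y| \geq n/\lambda_{n}} |\nabla Z|^{2} \, dy > \epsilon_{0}/2$ for $n$ large; but $n/\lambda_{n} \geq n/C_{0} \to \infty$ forces the tail to zero, a contradiction.

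The only delicate step is the application of Lemma \ref{Lem:CompactnessK}: one must be able to extract a single profile $Z$ with constant energy along the subsequence, which is exactly what Assumptions \eqref{Eqn:AssFinite} and \eqref{Eqn:AssEj} guarantee through the trichotomy in the proof of that lemma. Once this reduction is in hand, the rest of the argument is a transparent pigeon-hole on the rescaling parameter $\lambda_{n}$, squeezed between an inner-mass lower bound at scale $1/\lambda_{n}$ and an outer-tail bound at scale $n/\lambda_{n}$.
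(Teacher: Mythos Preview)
Your proof is correct and follows essentially the same route as the paper's: set up a contradiction, normalize $\eta$ to $1$ by scaling, extract a single rescaled profile $Z$, and then squeeze the scale parameter $\lambda_n$ between the inner-mass bound at radius $1/\lambda_n$ and the vanishing tail at radius $n/\lambda_n$. The only cosmetic difference is that the paper first packages Lemma~\ref{Lem:CompactnessK} into the compactness of $K$ (Proposition~\ref{Prop:CompactnessK}) and uses strong convergence $\tilde{Q}_n\to\tilde{Q}$ in $K$, whereas you invoke Lemma~\ref{Lem:CompactnessK} directly and control the remainder $w_n$ by Cauchy--Schwarz; the two are equivalent.
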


\begin{proof}
Rescaling, we may assume that $\eta =1$. 

We argue by contradiction. Assuming that the conclusion does not hold, there exists a sequence $( Q_{n} )_{n \geq 1} $  such that
$ Q_{n} \neq 0 $,  $ \int_{|x| \leq 1} | \nabla Q_{n}(x)|^{2} \,dx \geq  \epsilon $ , and
$ \int_{|x| \leq n }  |\nabla Q_{n}|^{2} \,dx \leq 3 E(Q_{n}) - \epsilon $.  Let $\lambda_{n}$ be such that $\tQ_{n} \in K$
(recall that $K$ is defined in (\ref{SR11}) ) with $\tQ_{n}$ defined by
$ Q_{n}(x) = \frac{1}{\lambda_{n}^{\frac{1}{2}}} \tQ_{n}  \left( \frac{x}{\lambda_{n}} \right) $. Extracting a
subsequence if necessary there exists $ \tQ \in K$ such that $ \tQ_{n} \rightarrow \tQ $ in $\Hb$. Hence, a change of
variable yields  
$$ \int_{|x| \leq  \lambda^{-1}_{n} }  | \nabla \tQ(x) |^{2} \, dx  \geq \epsilon \text{ and }\int_{|x| \leq n \lambda_{n}^{-1}} | \nabla \tQ(x) |^{2} \, dx  \leq \int |\nabla \tQ(x)|^2\,dx- \epsilon.$$
This is a contradiction, since the first inequality implies that $\lambda_n^{-1}$ is bounded from below by a stricty positive constant, which in turn implies $\lim_{n\to\infty}n\lambda_n^{-1}=+\infty$, contradicting the second inequality.


\end{proof}

\begin{proof}[Proof of Lemma \ref{Lem:EsttildeEj}]

We note that for all $j$, $c\mapsto \int_{|x|\leq c} |\nabla Q_j(x)|^2dx$ is increasing. Indeed, the zeroes of the function $r\mapsto |\nabla Q_j(r)|^2$ are isolated because of uniqueness in Cauchy-Lipschitz theorem.

Let $ c_{j} > 0 $, be such that $ \int_{|x| \leq c_{j}} |\nabla  Q_{j}(x)|^{2} \,dx =\epsilon $. We first prove 
\begin{equation}
\label{limit}
\lim \limits_{n \rightarrow \infty} \frac{\tilde{\lambda}_{j}(t_{n})}{\lambda_{j,n}} = c_j  
\end{equation} 

Indeed, using elementary changes of variables and (\ref{Eqn:OrthCondLambda}), we have, for $\bar{c}>0$
\begin{equation}
\int_{|x| \leq \bar{c} \lambda_{j,n} } \left|  \nabla \left( u(t) - v_{L}(t) \right) \right|^{2} \,dx
= 3 \sum \limits_{k=1}^{j-1} E(Q_{k}) +  \int_{|x| \leq \bar{c}}  \left| \nabla Q_{j}(x) \right|^{2} \,dx + o(1).
\nonumber
\end{equation}
This shows:
$$ \lim_{n\to\infty}\int_{|x|\leq \bar{c}\lambda_{j,n}}|\nabla (u(t)-v_L(t))|^2dx-3\sum_{k=1}^{j-1}E(Q_k)\begin{cases} >\epsilon&\text{ if }\bar{c}>c_j\\ <\epsilon &\text{ if }\bar{c} <c_j.\end{cases}$$
Letting $c_-<c<c_+$, we obtain $c_{-} \lambda_{j,n} \leq \tilde{\lambda}_{j}(t_{n})\leq c_{+} \lambda_{j,n}$ for large $n$, which shows the claimed limit. 

Let $\tC$ be given by Claim \ref{Claim:GrothLocNorm}. Using that by \eqref{limit},
$$\lim_{n\to\infty} \tC\tlambda_{j}(t_n)/\lambda_{k,n}=\begin{cases} +\infty&\text{ if }k<j\\ 0&\text{ if }k>j\end{cases},$$
we obtain
\begin{multline*}
\int_{|x| \leq \tC\lambda_j(t_n)} \left|  \nabla \left( u(t) - v_{L}(t) \right) \right|^{2} \,dx\\
= 3 \sum \limits_{k=1}^{j-1} E(Q_{k}) +  \int_{|x| \leq \tC\tilde{\lambda}_j(t_n)/\lambda_{j,n} }  \left| \nabla Q_{j}(x) \right|^{2} \,dx + o(1)\\
= 3 \sum \limits_{k=1}^{j-1} E(Q_{k}) +  \int_{|x| \leq \tC c_j}  \left| \nabla Q_{j}(x) \right|^{2} \,dx + o(1),
\nonumber
\end{multline*}
where the last equality follows also from \eqref{limit}.
By Claim \ref{Claim:GrothLocNorm}
$$3E(Q_j)-\epsilon\leq \int_{|x| \leq \tC c_j}  \left| \nabla Q_{j}(x) \right|^{2} \,dx \leq 3E(Q_j),$$
and the conclusion of the Lemma follows.
\end{proof}

\subsection{End of the proof of the resolution into stationary solutions}

\label{sub:end_of_proof}

In this subsection we prove Theorem \ref{T:continuous_resolution}. 

\smallskip

\noindent\emph{Step 1.} We prove that $\vec{u}(t)$ is bounded in $\HHb$, and more precisely that there exists a
$ ( J'_{0}, E'_{1},..., E_{J'_{0}} ) \in \mathbb{N} \times E(\mathcal{Z} )^{J_{0}'}$ such that
\begin{equation}
 \label{SR40Mult}
 \lim_{t \to \infty} \|\vec{u}(t)-\vec{v_L}(t)\|_{\HHb}^2 = 3  \sum \limits_{j=1}^{J^{'}_{0}} E^{'}_{j} \cdot
\end{equation}

and, for all sequence $t_n\to +\infty$ such that $\eqref{Eqn:OrthCondLambda}$ and \eqref{expansion_seq} hold, we have 
\begin{equation}
 \label{SR40'Mult}
 J=J'_0\quad \text{and}\quad \forall j\in 1,\ldots, J'_0 \text{ and }E(Q_j)=E'_j.
\end{equation}

Indeed, first observe by Proposition \ref{P:bounded} that there exists a sequence $ t_n' \to \infty$ such that $\|\vec{u}(t_n')\|_{\HHb}$ is bounded. Hence,
by Theorem \ref{T:sequence_resolution} (more precisely, by (\ref{expansion_seq}), taking into account (\ref{Eqn:OrthCondLambda}) and (\ref{Eqn:PohozaevEq})), we see that there exist $ J'_{0} < \infty $, and $Q'_{1}$,..., $ Q'_{J^{'}_{0}} \in \mathcal{Z}$ with energy $E(Q'_{1})$,..., $E(Q'_{J^{'}_{0}})$ respectively such that
\begin{equation}
\lim_{n \to \infty} \| \vec{u}(t_{n}')- \vec{v_L}(t_{n}') \|_{\HHb}^2 = 3 \sum \limits_{j=1}^{J'_{0}} E(Q'_{j}) \cdot
\label{Eqn:IntermLimuvL}
\end{equation}

We let $E_j'=E(Q_j')$ and prove \eqref{SR40Mult} by contradiction. Assuming that (\ref{SR40Mult}) does not hold, there exist $ t_{n} \rightarrow \infty $ such that
\begin{equation*}
\lim_{t_{n} \to \infty} \| \vec{u}(t_{n})- \vec{v_L}(t_{n}) \|_{\HHb}^2 \neq 3 \sum \limits_{j=1}^{\bar{J}} E'_j\cdot
\end{equation*}

Hence an application of the intermediate value theorem shows that there exists $t^{''}_{n} \rightarrow \infty $ such that there exists $ \eta > 0 $ such that
\begin{equation}
\lim_{t^{''}_{n} \to \infty} \|\vec{u}(t^{''}_{n})- \vec{v_L}(t^{''}_{n}) \|_{\HHb}^2 = 3 \sum \limits_{j=1}^{J^{'}_{0}} E'_j+ \eta \cdot
\nonumber
\end{equation}
This implies that $ \{  \vec{u}(t^{''}_{n}) \} $ is a bounded sequence in $\HHb $.  Hence we can apply Theorem \ref{T:sequence_resolution}
and then Proposition \ref{Prop:SameOrderNrj} combined with (\ref{Eqn:IntermLimuvL}) shows that in fact
\begin{equation}
\lim_{t^{''}_{n} \to \infty} \| \vec{u}(t^{''}_{n})- \vec{v_L}(t^{''}_{n}) \|_{\HHb}^2 = 3 \sum \limits_{j=1}^{J^{'}_{0}} E(Q_{j}') \cdot
\nonumber
\end{equation}
This is clearly not possible. 
The equality \eqref{SR40'Mult} then follows easily from Proposition \ref{Prop:SameOrderNrj}.

\medskip

\noindent\emph{Step 2. Construction of the scaling parameters.}
Letting $(J'_{0}, E^{'}_{1}, E^{'}_{2},...,E^{'}_{J_{0}^{'}}) \subset \mathbb{N} \times  E(\mathcal{Z})^{J_{O}^{'}} $ as in Step $2$, we define, for $1 \leq j \leq J^{'}_0$
\begin{equation}
\label{SR44Sev}
\lambda_j(t)= \inf \left\{ \lambda > 0, \; \int_{ |x| \leq \lambda} \left(|\nabla(u(t)-v_L(t))|^{2}+ e^{-|x|-|t|} \right) dx  \geq
3 \sum \limits_{k=1}^{j-1} E^{'}_{k} + \frac{3}{2} E^{'}_{j}
\right\}.
\end{equation}
By \eqref{SR40Mult}, $\lambda_j(t)$ is well-defined, and a positive number for all $ t \geq T_0 $, where $T_0$ is large. In this step, we prove $\lambda_j\in \mathcal{C}^0([T_0,\infty),[0,\infty))$. For this we will use the term $e^{-|x|-|t|}$ which appears in \eqref{SR44Sev}. 

We fix $T\geq T_0$ and prove that $\lambda_j(t)$ is continuous at $t=T$. If not, there exists $T_n\to T$ and $\ell\in [0,\infty]$ with $\ell\neq \lambda_j(T)$ such that 
$$\lim_{n\to\infty}\lambda_j(T_n)=\ell.$$
By the definition of $\lambda_j(t)$, we have, for $t\geq T_0$,
$$\int_{|x|\leq \lambda_j(t)}\left(|\nabla(u(t)-v_L(t))|^2+e^{-|x|-|t|}\right)dx = 3 \sum \limits_{k=1}^{j-1} E^{'}_{k} + \frac{3}{2} E^{'}_{j} .$$
Letting $t=T_n$, $n\to\infty$ in the preceding, we obtain 
\begin{multline*}
\int_{|x|< \ell}\left(|\nabla(u(T)-v_L(T))|^2+e^{-|x|-|T|}\right)dx = 3 \sum \limits_{k=1}^{j-1} E^{'}_{k} + \frac{3}{2} E^{'}_{j}\\ =\int_{|x|< \lambda_j(T)}\left(|\nabla(u(T)-v_L(T))|^2+e^{-|x|-|T|}\right)dx.
\end{multline*}
Hence $\ell=\lambda_j(T)$, a contradiction.

\medskip

\noindent\emph{Step 3. Construction of $Q_j(t)$.}
We denote
$$ U_j(t,x)=\lambda_j^{1/2}(t) (u-v_L)(t,\lambda_j(t)x).$$
In this step we show the existence of $ Q_j(t) \in  K $, for $ 1 \leq j \leq J^{'}_0$, $t\geq T_0$ ($T_0$ large) such that
\begin{equation}
 \label{PropertyQj}
 U_j(t)-Q_j(t)\xrightharpoonup{t\to\infty} 0,\quad\text{weakly in } \Hb.
\end{equation}
Let $(e_k)_{k\in \NN}$ be a Hilbert basis of $ \Hb $. If $\varphi$ and $\psi$ are in $ \Hb $, we let
$$d(\varphi,\psi)=\sum_{k \in \NN} \frac{1}{2^k}\left|\langle \varphi-\psi, e_k \rangle\right|.$$
Then it is easy to check that $d$ is a metric on $ \Hb $, and that
$ \lim_{n \to \infty}d ( \varphi_n, \psi) = 0 $ in
$$ \mathcal{B}=\left\{ \varphi \in \Hb ,\; \|\varphi\|^{2}_{ \Hb }\leq 1+ 3 \sum \limits_{j=1}^{J^{'}_{0}} E^{'}_{j} \right\} $$
if and only if $ \varphi_n \xrightharpoonup{n \to \infty} \psi \text{ weakly in } \Hb$. 
We also note that $ K \subset \mathcal{B} $, and that it is compact for the topology induced by $d$ (since it is compact for the strong topology of $\Hb$, as proved above).

By Step 1, $U_j(t)\in \mathcal{B}$ for $t$ large enough.
By Lemma \ref{L:compact}, the following property will imply
\eqref{PropertyQj}: 
\begin{equation}
 \label{lim_d_Uj_K}
 \lim_{t\to\infty} d(U_j(t),K)=0.
\end{equation} 

Let $t_n\to\infty$. By Theorem \ref{T:sequence_resolution} and Step 1 (rescaling the $\Phi^j$ to be in $K$), extracting subsequences, there exist elements $\Phi^1,\ldots,\Phi^{J^{'}_0}$ of $K$ such that
\begin{gather}
\label{SR70}
 \vec{u}(t_n)-\vec{v}_L(t_n)=\sum_{k=1}^{J^{'}_0}\left( \frac{1}{\lambda_{k,n}^{1/2}} \Phi^k\left( \frac{\cdot}{\lambda_{k,n}} \right),0 \right)+o_n(1)\text{ in }\HHb\\
 \label{SR70'}
 \lambda_{1,n}\ll \ldots\ll\lambda_{J_0',n}.,
\end{gather}
with $\forall j\in \{1,\ldots,J_0'\}$, $3E(\Phi^j)=\int |\nabla \Phi_j|^2=E'_j$.
Fixing $j \in \{1,\ldots,J_0^{'} \}$, we claim
\begin{equation}
 \label{SR71}
 \lim_{n\to\infty}\frac{\lambda_{j,n}}{\lambda_j(t_n)}=1.
\end{equation}

Indeed, by \eqref{SR70} and \eqref{SR70'}, 
\begin{multline}
 \label{SR72}
 \int_{|x|\leq c\lambda_{j,n}}|\nabla(u(t_n,x)-v_L(t_n,x)|^2dx \\
 = \sum_{k=1}^{j-1}  \int \left|\nabla \Phi^k(x)\right|^2 \,dx +
  \int_{|x| \leq c} |\nabla \Phi^j(x)|^2dx  + o_n(1).
\end{multline}
Using that $\Phi^j\in K$, we obtain $\int_{|x|\leq 1}\Phi_j=\frac{3}{2}E_j'$. Thus \eqref{SR72} and the definition of $\lambda_j(t)$ implies 
that for any $\delta>0$, we have that for large $n$,
$$(1-\delta)\lambda_{j,n}\leq \lambda_{j}(t_n)\leq (1+\delta)\lambda_{j,n},$$
which yields \eqref{SR71}.

By \eqref{SR70}, \eqref{SR70'}, we have
$$ \lambda_{j,n}^{1/2}(u-v_{L} )(t_n,\lambda_{j,n}\cdot)\xrightharpoonup{n\to\infty} \Phi^j \text{ weakly in } \Hb.$$
Combining with \eqref{SR71}, we deduce
$$ \lim_{n\to\infty} d(U_j(t_n),\Phi^j)=0,$$
which implies \eqref{lim_d_Uj_K}, concluding this Step 3.


\noindent

\emph{Step 4.} We prove that the expansion \eqref{SR12} holds, with $J_{0} = J_{0}^{'}$ and the $\lambda_j(t)$, $Q^j(t)$ constructed in the preceding steps. We argue by contradiction, assuming that there exists $\eps>0$ and $t_n\to\infty$ such that for all $n$,
\begin{equation}
 \label{no_expansion}
 \left\|\vec{u}(t_n) -\vec{v}_{L}(t_n) - \sum_{j=1}^{J^{'}_0} \left(\frac{1}{\lambda_{j}^{\frac{1}{2}}(t_n)} Q^{j} \left( t_n, \frac{\cdot}{\lambda_{j}(t_n)} \right),0\right)\right\|_{\HHb}\geq \eps.
 \end{equation}
Using Theorem \ref{T:sequence_resolution}  and Proposition \ref{Prop:CompactnessK} (rescaling the solitons if necessary to be in $K$), we obtain (extracting a subsequence from $\{t_n\}$ if necessary) $(\Phi^1,\ldots,\Phi^{J_0'})\in K^{J_0}$ and $(\lambda_{1,n},\ldots,\lambda_{J_0',n})$ such that \eqref{SR70} and \eqref{SR70'} hold. Note that Step 1 ensures that the number of profiles is exactly $J_0'$. Arguing as in Step 3, we obtain
\begin{gather}
\label{SR80}
\lambda_{j,n}^{1/2}(u-v_{L})( t_n,\lambda_{j,n}\cdot)\xrightharpoonup{n\to\infty}\Phi^j\text{ weakly in } \Hb \cdot \\
\label{SR81}
\lim_{n\to\infty}\frac{\lambda_j(t_n)}{\lambda_{j,n}}=1
\end{gather}
By the construction of $Q^j$ in step 3, we also have
\begin{equation}
\label{SR82}
\lambda_{j}^{1/2}(t_n)(u-v_{L})(t_n,\lambda_{j}(t_n)\cdot)-Q^j(t_n)\xrightharpoonup{n\to\infty}0\text{ weakly in } \Hb.
\end{equation}
Since $K$ is a compact subset of $ \Hb $, extracting subsequences, we can assume that there exists $\widetilde{\Phi}^j$ such that
\begin{equation}
 \label{SR83}
 \lim_{n\to\infty}\left\|Q^{j}(t_n)-\widetilde{\Phi}^j\right\|_{\Hb}=0.
\end{equation}
Combining \eqref{SR80},\ldots,\eqref{SR83}, we see that $\widetilde{\Phi}^j=\Phi^j$. Using \eqref{SR81} and \eqref{SR83} again, we see that the expansion \eqref{SR70} implies
$$\lim_{n\to\infty}
 \left\|\vec{u}(t_n) -\vec{v}_{L}(t_n) - \sum_{j=1}^{J_{0}^{'}} \left(\frac{1}{\lambda_{j}^{\frac{1}{2}}(t_n)} Q^{j} \left( t_n, \frac{\cdot}{\lambda_{j}(t_n)} \right),0\right)\right\|_{\HHb}=0,$$
 contradicting \eqref{no_expansion}. The proof is complete.
\qed

\section{Higher dimensions}

\label{S:higherD}

In this section we discuss the generalization to higher dimensions of the results above, i.e. the equation \eqref{eq:NLW} on $\RR^N$, $N\geq 4$, where
$$ \HHb =\dot{H}^1_{rad}(\RR^N,\RR^m)\times L^2_{rad}(\RR^N,\RR^m),$$
and $f\in \mathcal{C}^2(\RR^m\setminus \{0\},\RR^m)$ is homogeneous of degree $\frac{N+2}{N-2}$ (note that due to the homogeneity, it is not possible to have nonlinearities that are $\mathcal{C}^2$ at the origin if $N\geq 7$).

The major obstruction to the adaptation of the proof of Theorems \ref{T:sequence_resolution}, \ref{T:continuous_resolution} and Theorem \ref{T:rigidity} to higher dimension is the fact that the lower bound of the exterior energy given by Proposition \ref{P:channels} of solutions of linear wave equation must be replaced by a weaker analog in dimension $N\geq 4$: see \cite{CoKeSc14}, \cite{KeLaLiSc15}, \cite{DuKeMaMe22}, \cite{LiShenWei24}.

Nevertheless, in dimension $4$, for the scalar equation $\partial_{tt}u-\triangle u=u^3$, the analogs of the continuous soliton resolution, Theorem \ref{T:continuous_resolution}, and the rigidity theorem, Theorem \ref{T:rigidity} are valid, see \cite{DuKeMaMe22}. We conjecture that these theorems will remain valid for a general system of equation of the form \eqref{eq:NLW} with $f$ homogeneous of degree $3$. We note however that the proof of \cite{DuKeMaMe22} does not adapt completely to the case of systems, since it uses arguments on the sign of the solution which are of course not valid for systems.

For the scalar equation
\begin{equation}
\label{NLW_scalar}
\partial_{tt}u-\triangle u=|u|^{\frac{4}{N-2}}u
\end{equation}
with $N\geq 5$, the analog of the strong rigidity theorem \ref{T:rigidity} is not valid (see \cite{CoDuKeMe23a}), and only a weaker version of the continuous soliton resolution is known, with the additional assumption:
\begin{equation}
\label{bound}
\sup_{t\geq 0}\|(u,\partial_tu)(t)\|_{\dot{H}^1\times L^2}<\infty.
\end{equation}
(see \cite{DuKeMe23} for $N$ odd, \cite{CoDuKeMe22Pb} for $N=6$ and \cite{JendrejLawrie23} for any $N\geq 5$).
The following weaker rigidity result is proved in these works:
\begin{thm}
\label{T:weak_rigidity}
 Assume that $N\geq 5$. Let $u$ be a radial global solution of \eqref{NLW_scalar} which is bounded in the energy space. Assume
 \begin{equation}
  \label{no_channel_at_all}
  \forall A\in \RR,\quad \lim_{t\to\pm\infty}\int_{|x|>|t|+A}|\nabla_{t,x}u(t,x)|^2dx=0.
 \end{equation}
 Then $u$ is a stationary solution.
\end{thm}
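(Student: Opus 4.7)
The plan is to combine the soliton resolution for radial bounded solutions of \eqref{NLW_scalar} in dimensions $N\geq 5$ (\cite{DuKeMe23}, \cite{CoDuKeMe22Pb}, \cite{JendrejLawrie23}) with the refined exterior-energy estimates for the radial free wave equation in $\RR^N$ (\cite{CoKeSc14}, \cite{KeLaLiSc15}, \cite{DuKeMaMe22}).

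First I would pick a sequence $t_n\to+\infty$ and apply the sequential soliton resolution to $u$. After extraction, one obtains $J\in\mathbb{N}$, signs $(\iota_j)\in\{\pm 1\}^J$, scales $\lambda_{1,n}\ll\cdots\ll\lambda_{J,n}\ll t_n$, and a radiation $v_L$ solving the free wave equation with data in $\dot{H}^1\times L^2$ such that
$$\vec{u}(t_n)=\vec{v}_L(t_n)+\sum_{j=1}^J\bigl(\iota_j W_{(\lambda_{j,n})},0\bigr)+o(1)\text{ in }\dot{H}^1\times L^2.$$
Since $\lambda_{j,n}/t_n\to 0$, the exterior energy of each rescaled ground state $W_{(\lambda_{j,n})}$ outside $\{|x|>|t_n|+A\}$ vanishes as $n\to\infty$; combined with \eqref{no_channel_at_all}, this forces $\int_{|x|>|t_n|+A}|\nabla_{t,x}v_L(t_n)|^2\,dx\to 0$ for every $A\in\RR$. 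By the $N$-dimensional analog of Lemma \ref{L:radiative}, the radiation-field map is an isometry from $\dot{H}^1\times L^2$ onto $L^2(\RR)$, so $v_L\equiv 0$. The symmetric argument at $-\infty$ yields a radiation-free decomposition there as well.

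Next I would upgrade to the continuous-in-time soliton resolution, the scalar analog of Theorem \ref{T:continuous_resolution} valid under the bound \eqref{bound}: there exist a fixed integer $J\geq 0$, signs $\iota_j$, and continuous scales $\lambda_j\in\mathcal{C}^0([T_0,\infty),(0,\infty))$ with $\lambda_1(t)\ll\cdots\ll\lambda_J(t)\ll t$ such that
$$\vec{u}(t)=\sum_{j=1}^J\bigl(\iota_j W_{(\lambda_j(t))},0\bigr)+o(1),\quad t\to+\infty,$$
and analogously at $-\infty$. The case $J=0$ immediately yields $u\equiv 0$, which is stationary, so it remains to show that $J\leq 1$ and that $\lambda_1$ is constant when $J=1$.

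The main obstacle, and the deepest step, is this final reduction. I would invoke the refined channel-of-energy estimates of \cite{CoKeSc14}, \cite{KeLaLiSc15} in dimension $N\geq 5$: for a radial free wave, after projecting the initial data onto the orthogonal of an explicit finite-dimensional subspace of Newtonian tails, the outgoing exterior energy on $\{|x|>|t|+A\}$ is bounded below by the squared norm of the projection. Linearizing $u-\sum_j\iota_j W_{(\lambda_j(t))}$ around the multisoliton and running the standard modulation analysis to extract ODEs for the $\lambda_j(t)$, the assumption \eqref{no_channel_at_all}, applied simultaneously at $\pm\infty$, forces the relevant projections of the modulation error to vanish, hence $\lambda_j'\equiv 0$. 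A dedicated analysis of the pairwise interaction between solitons of different scales, as in \cite{DuKeMe23} and \cite{JendrejLawrie23}, then excludes $J\geq 2$. Therefore $\vec{u}(t)=(\iota W_{(\lambda)},0)$ for some constant $\lambda>0$, a stationary solution.
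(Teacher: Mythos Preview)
The paper does not prove Theorem \ref{T:weak_rigidity}; it is quoted in Section \ref{S:higherD} as a result established in \cite{DuKeMe23}, \cite{CoDuKeMe22Pb}, \cite{JendrejLawrie23}, with no argument given. There is therefore no in-paper proof to compare your proposal against.

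Your outline has a structural issue and a genuine gap. Structurally, deducing the weak rigidity from the full continuous soliton resolution is at best uninformative and, for odd $N$, circular: in \cite{DuKeMe23} rigidity-type statements (classification of nonradiative solutions) are proved first via channel-of-energy methods and then used as ingredients for the resolution. Citing the resolution to recover one of its own inputs is not an independent proof.

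The gap is your last paragraph. Once you have killed the radiation and written $\vec{u}(t)=\sum_j(\iota_j W_{(\lambda_j(t))},0)+o(1)$ at both time infinities, the hypothesis \eqref{no_channel_at_all} has been fully spent in forcing $v_L\equiv 0$; it is an asymptotic statement and carries no further constraint on $\lambda_j(t)$ at finite times. Your sentence ``the assumption \eqref{no_channel_at_all} \ldots\ forces the relevant projections of the modulation error to vanish, hence $\lambda_j'\equiv 0$'' does not follow: the exterior-energy lower bounds of \cite{CoKeSc14}, \cite{KeLaLiSc15} apply to \emph{free} solutions, not to the nonlinear remainder around a moving multisoliton, and you give no mechanism linking vanishing asymptotic exterior energy to vanishing of $\lambda_j'$ on all of $\RR$. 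Ruling out $J\geq 2$ and, for $J=1$, showing that the asymptotic one-bubble description forces $u$ to equal $\pm W_{(\lambda)}$ for a \emph{fixed} $\lambda$ and for \emph{all} times (not merely $\vec u(t)\to(\pm W_{(\lambda(t))},0)$), is precisely the hard content of the cited works; your sketch defers to them rather than supplying an argument.
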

To illustrate the difficulty of proving the results in the present paper in high dimensions, we give an example of a system of the form \eqref{eq:NLW}, where $f$ is homogeneous of degree $\frac{N+2}{N-2}$, with $m=2$ and $N\geq 5$ such that the analog of Theorem \ref{T:rigidity} is not valid, and the soliton resolution does not hold in full generality. Consider
\begin{equation}
\label{NLW_example}
 \left\{
 \begin{aligned}
  \partial_{tt}\varphi-\triangle \varphi&=|\varphi|^{\frac{4}{N-2}}\varphi\\
  \partial_{tt}\psi-\triangle \psi&=|\varphi|^{\frac{4}{N-2}}\psi.
 \end{aligned}
 \right.
\end{equation}
Let $W=\left(1+\frac{|x|}{N(N-2)}\right)^{\frac{2-N}{2}}$ be the ground state stationary solution of \eqref{NLW_scalar}. Then $\Phi=(\varphi,\psi)=(W,tW)$ is a global, finite energy solution of \eqref{NLW_example} such that
$$\forall A\in \RR,\quad \lim_{t\to\infty}\int_{|x|>|t|+A}|\nabla_{t,x}\Phi(t,x)|^2dx=0$$
and
\begin{equation}
\label{}
\lim_{t\to\pm\infty}\int_{\RR^N} |\nabla \Phi(t,x)|^2dx=+\infty.
\end{equation}
This shows that an analog of the rigidity theorem, Theorem \ref{T:rigidity} is false in dimension $N\geq 5$ without further assumption on the nonlinearity or on the solution.
The solution $\Phi$ is also a counter-example to the strong soliton resolution conjecture, asserting that any solution such that $T_+=\infty$ decomposes into rescaled stationary solution.

We do not know however if the conclusion of the rigidity theorem or the soliton resolution (for a sequence of times or for continuous times) hold for solutions that are assumed to be bounded in the energy space.

We note also that the nonlinearity $f$ in \eqref{NLW_example} does not derive from a potential. For nonlinearities of the form $f(u)=\nabla_uF(u)$, with $F$ homogeneous of degree $\frac{2N}{N-2}$, it is easy to adapt the proof of Proposition \ref{P:bounded} to prove that a global solution is bounded along a sequence of times. We conjecture that the analog of the weak rigidity theorem \ref{T:weak_rigidity} and the soliton resolution results \ref{T:sequence_resolution} and \ref{T:continuous_resolution} are valid with the  additional assumption that $f$ derives from a potential, however the adaptation of the works on scalar equations in this setting seems to be far from being trivial.


\begin{thebibliography}{100}

\bibitem[Ali95]{Alinhac95Bo}
Serge Alinhac.
\newblock {\em Blowup for nonlinear hyperbolic equations}.
\newblock Progress in Nonlinear Differential Equations and their Applications,
  17. Birkh\"auser Boston Inc., Boston, MA, 1995.

\bibitem[Aub76]{Aubin76}
Thierry Aubin.
\newblock \'{E}quations diff\'erentielles non lin\'eaires et probl\`eme de
  {Y}amabe concernant la courbure scalaire.
\newblock {\em J. Math. Pures Appl. (9)}, 55(3):269--296, 1976.

\bibitem[AZ17]{AzaiezZaag17}
Asma Azaiez and Hatem Zaag.
\newblock A modulation technique for the blow-up profile of the vector-valued
  semilinear wave equation.
\newblock {\em Bull. Sci. Math.}, 141(4):312--352, 2017.

\bibitem[Aza15]{Azaiez15}
Asma Azaiez.
\newblock Blow-up profile for the complex-valued semilinear wave equation.
\newblock {\em Trans. Am. Math. Soc.}, 367(8):5891--5933, 2015.

\bibitem[BG99]{BaGe99}
Hajer Bahouri and Patrick G{\'e}rard.
\newblock High frequency approximation of solutions to critical nonlinear wave
  equations.
\newblock {\em Amer. J. Math.}, 121(1):131--175, 1999.

\bibitem[CDKM22]{CoDuKeMe22Pb}
Charles Collot, Thomas Duyckaerts, Carlos Kenig, and Frank Merle.
\newblock On classification of non-radiative solutions for various
  energy-critical wave equations.
\newblock arXiv preprint 2201.01848, 2022.

\bibitem[CDKM23]{CoDuKeMe23a}
Charles Collot, Thomas Duyckaerts, Carlos Kenig, and Frank Merle.
\newblock On classification of non-radiative solutions for various
  energy-critical wave equations.
\newblock {\em Adv. Math.}, 434:91, 2023.
\newblock Id/No 109337.

\bibitem[CKS14]{CoKeSc14}
Rapha{\"e}l C{\^o}te, Carlos~E. Kenig, and Wilhelm Schlag.
\newblock Energy partition for the linear radial wave equation.
\newblock {\em Math. Ann.}, 358(3-4):573--607, 2014.

\bibitem[DJKM17]{DuJiKeMe17a}
Thomas Duyckaerts, Hao Jia, Carlos Kenig, and Frank Merle.
\newblock Soliton resolution along a sequence of times for the focusing energy
  critical wave equation.
\newblock {\em Geom. Funct. Anal.}, 27(4):798--862, 2017.

\bibitem[DK13]{DoKr13}
Roland Donninger and Joachim Krieger.
\newblock Nonscattering solutions and blowup at infinity for the critical wave
  equation.
\newblock {\em Math. Ann.}, 357(1):89--163, 2013.

\bibitem[DKM11]{DuKeMe11a}
Thomas Duyckaerts, Carlos Kenig, and Frank Merle.
\newblock Universality of blow-up profile for small radial type {II} blow-up
  solutions of the energy-critical wave equation.
\newblock {\em J. Eur. Math. Soc. (JEMS)}, 13(3):533--599, 2011.

\bibitem[DKM12]{DuKeMe12}
Thomas Duyckaerts, Carlos Kenig, and Frank Merle.
\newblock Universality of the blow-up profile for small type {II} blow-up
  solutions of the energy-critical wave equation: the nonradial case.
\newblock {\em J. Eur. Math. Soc. (JEMS)}, 14(5):1389--1454, 2012.

\bibitem[DKM13]{DuKeMe13}
Thomas Duyckaerts, Carlos Kenig, and Frank Merle.
\newblock Classification of radial solutions of the focusing, energy-critical
  wave equation.
\newblock {\em Camb. J. Math.}, 1(1):75--144, 2013.

\bibitem[DKM14]{DuKeMe14}
Thomas Duyckaerts, Carlos Kenig, and Frank Merle.
\newblock Scattering for radial, bounded solutions of focusing supercritical
  wave equations.
\newblock {\em Int. Math. Res. Not. IMRN}, 2014(1):224--258, 2014.

\bibitem[DKM19]{DuKeMe19}
Thomas Duyckaerts, Carlos Kenig, and Frank Merle.
\newblock Scattering profile for global solutions of the energy-critical wave
  equation.
\newblock {\em J. Eur. Math. Soc. (JEMS)}, 21(7):2117--2162, 2019.

\bibitem[DKM21]{DuKeMe21a}
Thomas Duyckaerts, Carlos Kenig, and Frank Merle.
\newblock Decay estimates for nonradiative solutions of the energy-critical
  focusing wave equation.
\newblock {\em J. Geom. Anal.}, 31:7036--7074, 2021.

\bibitem[DKM23]{DuKeMe23}
Thomas Duyckaerts, Carlos Kenig, and Frank Merle.
\newblock Soliton resolution for the radial critical wave equation in all odd
  space dimensions.
\newblock {\em Acta Math.}, 230(1):1--92, 2023.

\bibitem[DKMM22]{DuKeMaMe22}
Thomas Duyckaerts, Carlos Kenig, Yvan Martel, and Frank Merle.
\newblock Soliton resolution for critical co-rotational wave maps and radial
  cubic wave equation.
\newblock {\em Commun. Math. Phys.}, 391(2):779--871, 2022.

\bibitem[FXC11]{FaXiCa11}
DaoYuan Fang, Jian Xie, and Thierry Cazenave.
\newblock Scattering for the focusing energy-subcritical nonlinear
  {S}chr\"odinger equation.
\newblock {\em Sci. China Math.}, 54(10):2037--2062, 2011.

\bibitem[G{\'e}r98]{Gerard98}
Patrick G{\'e}rard.
\newblock Description du d{\'e}faut de compacit{\'e} de l'injection de sobolev.
\newblock {\em ESAIM Control Optim. Calc. Var.}, 3:213--233, 1998.

\bibitem[GSV92]{GiSoVe92}
Jean Ginibre, Avy Soffer, and Giorgio Velo.
\newblock The global {C}auchy problem for the critical nonlinear wave equation.
\newblock {\em J. Funct. Anal.}, 110(1):96--130, 1992.

\bibitem[JL23]{JendrejLawrie23}
Jacek Jendrej and Andrew Lawrie.
\newblock Soliton resolution for the energy-critical nonlinear wave equation in
  the radial case.
\newblock {\em Ann. PDE}, 9(2):117, 2023.
\newblock Id/No 18.

\bibitem[KLLS15]{KeLaLiSc15}
Carlos Kenig, Andrew Lawrie, Baoping Liu, and Wilhelm Schlag.
\newblock Channels of energy for the linear radial wave equation.
\newblock {\em Adv. Math.}, 285:877--936, 2015.

\bibitem[KM08]{KeMe08}
Carlos~E. Kenig and Frank Merle.
\newblock Global well-posedness, scattering and blow-up for the energy-critical
  focusing non-linear wave equation.
\newblock {\em Acta Math.}, 201(2):147--212, 2008.

\bibitem[KS07]{KrSc07}
Joachim Krieger and Wilhelm Schlag.
\newblock On the focusing critical semi-linear wave equation.
\newblock {\em Amer. J. Math.}, 129(3):843--913, 2007.

\bibitem[KST09]{KrScTa09}
Joachim Krieger, Wilhelm Schlag, and Daniel Tataru.
\newblock Slow blow-up solutions for the {$H\sp 1(\mathbb{R}\sp 3)$} critical
  focusing semilinear wave equation.
\newblock {\em Duke Math. J.}, 147(1):1--53, 2009.

\bibitem[Lev74]{Levine74}
Howard~A. Levine.
\newblock Instability and nonexistence of global solutions to nonlinear wave
  equations of the form {$Pu_{tt}=-Au+\mathcal{F}(u)$}.
\newblock {\em Trans. Amer. Math. Soc.}, 192:1--21, 1974.

\bibitem[LL01]{LiebLoss01BO}
Elliott~H. Lieb and Michael Loss.
\newblock {\em Analysis.}, volume~14 of {\em Grad. Stud. Math.}
\newblock Providence, RI: American Mathematical Society (AMS), 2nd ed. edition,
  2001.

\bibitem[LSW24]{LiShenWei24}
Liang Li, Ruipeng Shen, and Lijuan Wei.
\newblock Explicit formula of radiation fields of free waves with applications
  on channel of energy.
\newblock {\em Anal. PDE}, 17(2):723--748, 2024.

\bibitem[QS12]{QuittnerSouplet12}
Pavol Quittner and Philippe Souplet.
\newblock Optimal {Liouville}-type theorems for noncooperative elliptic
  {Schr{\"o}dinger} systems and applications.
\newblock {\em Commun. Math. Phys.}, 311(1):1--19, 2012.

\bibitem[Tal76]{Talenti76}
Giorgio Talenti.
\newblock Best constant in {S}obolev inequality.
\newblock {\em Ann. Mat. Pura Appl. (4)}, 110:353--372, 1976.

\bibitem[Tao06]{Tao06DPDE}
Terence Tao.
\newblock Spacetime bounds for the energy-critical nonlinear wave equation in
  three spatial dimensions.
\newblock {\em Dyn. Partial Differ. Equ.}, 3(2):93--110, 2006.

\bibitem[Tao16]{Tao16}
Terence Tao.
\newblock Finite-time blowup for a supercritical defocusing nonlinear wave
  system.
\newblock {\em Analysis \& PDE}, 9(8):1999--2030, 2016.

\end{thebibliography}
\end{document}